\newtheorem{mainthm}{Theorem}
\newtheorem{theorem}{Theorem}[section]
\newtheorem{corollary}[theorem]{Corollary}
\newtheorem{lemma}[theorem]{Lemma}
\newtheorem{prop}[theorem]{Proposition}
\newtheorem*{prop*}{Proposition}
\theoremstyle{definition}
\newtheorem{define}[theorem]{Definition}
\newtheorem{example}[theorem]{Example}
\newtheorem{remark}[theorem]{Remark}
\newtheorem{setup}[theorem]{}
\newcommand{\Rd}{\mathbb R^d}
\newcommand{\GM}{\mathcal G_M}
\newcommand{\gM}{\mathfrak g_M}
\newcommand{\TGM}{\mathbb T M} 
\newcommand{\Diff}{\mathrm{Diff}}
\newcommand{\DiffcM}{\mathrm{Diff}_c(M)}
\newcommand{\X}{\mathscr X}
\newcommand{\XcM}{\X_c(M)}
\newcommand{\CcMR}{C_c^\infty(M, \mathbb R)}
\newcommand{\R}{\mathbb R}
\newcommand{\C}{\mathbb C}
\newcommand{\N}{\mathbb N}
\newcommand{\T}{\mathbb T}
\newcommand{\cO}{\mu}
\newcommand{\PW}{\mathrm{PW}_\mathrm{fib}}
\newcommand{\Ad}{\mathrm{Ad}}
\newcommand{\ad}{\mathrm{ad}}
\newcommand{\id}{\mathrm{id}}
\newcommand{\ev}{\mathrm{ev}}
\newcommand{\evol}{\mathrm{evol}}
\newcommand{\Fl}{\mathrm{Fl}}
\newcommand{\Lf}{\mathbf{L}}
\newcommand{\ddt}{\frac{d}{dt}}
\newcommand{\fiberF}{\mathcal F_\mathrm{fib}} 
\newcommand{\blangle}{\bigl\langle}
\newcommand{\brangle}{\bigr\rangle}
\newcommand{\arrows}{\rightrightarrows}
\newcommand{\divv}{\mathrm{div}}
\newcommand{\cptops}{\mathcal B_0}
\newcommand{\supp}{\mathrm{supp}}
\newcommand{\blank}{{-}}
\newcommand{\interior}{\iota} 
\newcommand{\boundary}{\beta} 
\newcommand{\vol}{V_g}
\newcommand{\tr}{\mathrm{tr}}
\newcommand{\expTGM}{\exp^L}
\title{Cotangent Bundles as Coadjoint Orbits and Asymptotic Character Formulas}
\author{M. Gjertsen\footnote{NTNU Trondheim, \href{mailto:michael.gjertsen@ntnu.no}{michael.gjertsen@ntnu.no}} \ and A. Schmeding\footnote{NTNU Trondheim, \href{mailto:alexander.schmeding@ntnu.no}{alexander.schmeding@ntnu.no}}}
\date{}
\begin{document}

\maketitle
\begin{abstract}
\noindent The present article presents geometric quantization on cotangent bundles as a special instance of Kirillov's orbit method. To this end, the cotangent bundle is realized as a coadjoint orbit of an infinite-dimensional Lie group constructed from the diffeomorphism group. We also develop two asymptotic character formulas by employing the tangent groupoid.
\end{abstract}

\textbf{MSC2020:} 53D50  
(Primary); 17B08, 
 22E65,  
 22E66, 
 22A22 
 (Secondary) \smallskip

\textbf{Keywords:} geometric quantization, infinite-dimensional Lie group, coadjoint orbit, Connes' tangent groupoid, orbit method, character formula, trace formula
\tableofcontents

\section{Introduction}
The present article presents geometric quantization on cotangent bundles as a special instance of Kirillov's orbit method for infinite-dimensional Lie groups. 
The orbit method is founded on the observation that the coadjoint orbits of a Lie group $G$ are symplectic manifolds, where the symplectic form is canonically determined by the Lie bracket on $\mathfrak g$; a \textit{coadjoint orbit} is an orbit of $G$ in $\mathfrak g^*$, the continuous dual of its Lie algebra $\mathfrak g$, with respect to the dual of the adjoint action—the \textit{coadjoint action}. These symplectic manifolds give rise to irreducible representations of $G$ via an algebraic quantization procedure due to Kirillov \cite{Kirillov}. The orbit method spurred the development of geometric quantization, a geometric procedure which constructs a Hilbert space with a class of self-adjoint operators from a class of smooth functions on a symplectic manifold $S$ \cite{Woo97}. This can be viewed as a generalization of Kirillov's procedure:\ The coadjoint action of $G$ on a coadjoint orbit $S \subset \mathfrak g^*$ is Hamiltonian, with comoment map $\mathfrak g \to C^\infty(S, \R)$, $Z \mapsto \ev_Z$, where $\ev_Z(\phi) = \phi(Z)$. Geometric quantization of the image of this map will produce the derived version of the irreducible representation obtained via Kirillov's method.

The most common symplectic manifolds in physics are cotangent bundles:\ The dynamics of a classical system on a smooth manifold $M$ are described using its cotangent bundle $T^*M$ with the canonical symplectic structure. In this case, the class of functions that are amenable to geometric quantization are those that are affine on each fiber ($T_x^*M, x \in M$). These are also the Hamiltonians corresponding to an action by the infinite-dimensional Lie group
\begin{align*}
    \GM \coloneqq \Diff_c(M) \ltimes C_c^\infty(M, \R) \qquad \text{on } T^*M.\text{\footnotemark}
\end{align*}
It is therefore natural to ask whether $T^*M$ can be realized as a coadjoint orbit of $\GM$ and if geometric quantization can be recovered via the orbit method. We have not been able to find a discussion of this in the literature, presumably due to the difficulties that stem from the fact that $\GM$ is infinite-dimensional. In this article, we will therefore show that it is indeed possible to recover geometric quantization via the orbit method. Our results from Sections \ref{sec:cotangent_bundle_as_coadjoint_orbit} and \ref{sec:geometric_quantization_result}, in particular Theorems \ref{thm:coadjoint_orbit} and \ref{thm:Kirillovs_quantization}, subsume:
\footnotetext{This discussion assumes that we have chosen the (canonical) vertical polarization on the cotangent bundle. The compact support condition is not necessary for geometric quantization, but for Lie theoretic purposes. See \Cref{sec:geom_quant} for further discussion.}
\begin{mainthm}
    Let $M$ be a smooth manifold. Then, the cotangent bundle $T^*M$ is a coadjoint orbit of $\GM$ with respect to any locally convex topology on $\gM^*$ (the continuous dual of its Lie algebra $\gM$) between the weak$^*$- and strong topologies. Moreover, applying Kirillov's quantization scheme to this coadjoint orbit recovers geometric quantization.
\end{mainthm}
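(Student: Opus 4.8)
The plan is to establish the two assertions of Theorem~A separately, in the precise forms given by Theorems~\ref{thm:coadjoint_orbit} and~\ref{thm:Kirillovs_quantization} in Sections~\ref{sec:cotangent_bundle_as_coadjoint_orbit} and~\ref{sec:geometric_quantization_result}. For the first assertion, recall that the Lie algebra of $\GM$ is $\gM = \XcM \ltimes \CcMR$, with bracket $[(X,f),(Y,g)] = ([X,Y],\,X(g)-Y(f))$ coming from the action of $\DiffcM$ on $\CcMR$. I would introduce the candidate orbit map
\[ \Phi\colon T^*M \longrightarrow \gM^*, \qquad \Phi(\alpha_x)(X,f) \;=\; \langle \alpha_x, X(x)\rangle + f(x), \]
so that $\Phi(\alpha_x)$ is the functional pairing $(X,f)$ against the $1$-jet of $\alpha_x$ at the single point $x$; observe that $\Phi(\alpha_x)$ is continuous for \emph{every} topology between the weak$^*$ and the strong topology, since it factors through evaluation at $x$. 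Then I would verify three things: (i) $\Phi$ is equivariant, intertwining the natural Hamiltonian action of $\GM$ on $T^*M$ --- cotangent lifts of elements of $\DiffcM$ together with fiberwise translation by $dh$ for $h\in\CcMR$, which is exactly the action whose comoment map sends $(X,f)$ to the fiberwise-affine function $\alpha_x\mapsto\langle\alpha_x,X(x)\rangle+f(x)$ --- with the coadjoint action of $\GM$ on $\gM^*$; this is a direct computation from the semidirect-product formula for $\Ad^*$. (ii) $\Phi$ is injective: taking $X=0$ and letting $f$ range over bump functions separates base points, and then non-degeneracy of the pairing $T_xM\times T_x^*M\to\R$, together with the fact that every tangent vector at $x$ is the value of some compactly supported vector field, recovers the covector. (iii) The $\GM$-action on $T^*M$ is transitive, since $\DiffcM$ acts transitively on $M$ and the fiberwise translations by $dh$, $h\in\CcMR$, realize every covector over a fixed base point. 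Together, (i)--(iii) identify $\Phi(T^*M)$ with a single coadjoint orbit.

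To realize $T^*M$ as this orbit \emph{as a manifold}, uniformly in the chosen topology on $\gM^*$, I would argue by a sandwich: $\Phi$ is smooth --- in particular continuous --- into $\gM^*$ with the \emph{strong} topology (the required estimates are local and uniform on bounded subsets of $\gM$), while the inverse of $\Phi$ on its image is continuous for the \emph{weak$^*$} subspace topology, since one recovers $x$ and $\alpha_x$ from $\Phi(\alpha_x)$ by testing against finitely many fixed pairs $(0,f_i)$ and $(X_j,0)$, which are weak$^*$-continuous operations. As every admissible topology lies between the weak$^*$ and the strong topology, $\Phi$ is simultaneously a homeomorphism onto its image for all of them; transporting the smooth structure of $T^*M$ --- equivalently, identifying the orbit with the homogeneous space $\GM/(\GM)_{\Phi(\alpha_x)}$, whose isotropy group is a Lie subgroup with the expected Lie algebra --- then makes $\Phi$ a diffeomorphism onto the orbit. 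Finally, because $\Phi$ is the moment map of a transitive Hamiltonian action, the Kirillov--Kostant--Souriau form on $\Phi(T^*M)$ pulls back along $\Phi$ to the canonical symplectic form on $T^*M$.

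For the second assertion, the comoment map of the above action, $\gM\to C^\infty(T^*M,\R)$, $(X,f)\mapsto\ev_{(X,f)}\circ\Phi$, has image precisely the functions that are affine along the fibers of $T^*M$ --- equivalently, the functions whose Hamiltonian flows preserve the vertical polarization, which is exactly the class of observables that geometric quantization with the vertical polarization is designed to quantize. Running Kirillov's scheme on $\Phi(T^*M)$ then amounts to the following data: the (trivial) prequantum line bundle with connection the tautological $1$-form of $T^*M$; the unique $\GM$-invariant polarization, which corresponds under $\Phi$ to the vertical polarization; and the half-form correction. Each of these ingredients agrees with the corresponding datum of geometric quantization on $T^*M$, and the first-order operators attached to the fiberwise-affine Hamiltonians coincide --- pullbacks of functions on $M$ acting by multiplication, cotangent lifts of vector fields by the Lie derivative --- so the representation produced by the orbit method is exactly the geometric quantization of $T^*M$.

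The step I expect to be the main obstacle is neither the $\Ad^*$-computation nor the injectivity, but making the manifold statement of the second paragraph rigorous and independent of the chosen locally convex topology on $\gM^*$: verifying that $\Phi(T^*M)$ carries a smooth (weak symplectic) structure for which $\Phi$ is a diffeomorphism, simultaneously for every topology between weak$^*$ and strong, and that the relevant stabilizers are genuine Lie subgroups of the infinite-dimensional group $\GM$. The weak$^*$/strong sandwich is precisely the device that avoids analyzing each topology in the range separately.
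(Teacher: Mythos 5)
For the first assertion your route is essentially the paper's: the same map $\eta_x \mapsto (\eta_x,\delta_x)$, the same equivariance, injectivity and transitivity checks, and the same weak$^*$/strong ``sandwich'' to cover all intermediate topologies at once. But the step you yourself flag as the main obstacle is exactly where the paper's work lies, and your sketch does not close it: smoothness and the immersion property of the moment map into $\gM^*$ with the strong topology are obtained by embedding $\gM$ as a complemented subspace of some $C_c^\infty(M,\R^{m+1})$ and running explicit seminorm estimates (an Arzel\`a--Ascoli-type bound on bounded sets, plus Gl\"ockner's criterion to upgrade injective differentials to an immersion), and the symplectomorphism property is verified by a direct coordinate computation of the derived action $d\alpha^0$ rather than by citing the finite-dimensional ``moment map of a transitive Hamiltonian action'' principle, which is not available off the shelf here. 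Your alternative of transporting the structure through $\GM/(\GM)_{\Phi(\alpha_x)}$ would require showing that the \emph{full} stabilizer (defined by $(\phi^{-1})^*\eta_x - df_{\phi(x)} = \eta_x$, not merely $\phi(x)=x$) is a Lie subgroup with a manifold quotient; the paper deliberately avoids this, and nothing in your proposal supplies it. (A minor point: the bracket on $\gM$ is $(-[X,Y],\,-Xg+Yf)$, since the Lie algebra of $\DiffcM$ carries the negative of the usual bracket; your sign convention propagates into the $\Ad^*$ computation and the Kirillov--Kostant--Souriau form.)

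The genuine gap is in the second assertion. ``Kirillov's quantization scheme'' means the algebraic procedure: fix $\eta_x$ in the orbit, exhibit an admissible real algebraic polarization, here $\mathfrak h_x = \{(X,f)\in\gM : X_x = 0\}$, pass to the corresponding subgroup $H_x = D_x \ltimes_\gamma \CcMR$, and induce the character $(\phi,f)\mapsto e^{2\pi i f(x)}$ up to $\GM$. Your proposal instead lists geometric-quantization data on the orbit (prequantum line bundle, invariant polarization, half-forms) and asserts they match those of $T^*M$; that presupposes the identification of the orbit method with geometric quantization of the orbit, which is precisely what the theorem claims, so the argument is circular at the decisive point. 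What is missing, and what the paper supplies, is twofold: first, the infinite-dimensional Lie theory making $D_x$ and $H_x$ genuine Lie subgroups with $\GM/H_x \cong M$ a smooth submersion quotient (regular value theorem for $\ev_x$, $C^k$-regularity, quotient manifold structures --- none of this is automatic for groups modeled on non-Banach locally convex spaces); second, the construction of the induced representation in the absence of a Haar measure, carried out Folland-style using the quasi-invariant Riemannian measure on $\GM/H_x \cong M$ and the explicit function-space model $\mathcal F_0 \cong C_c(M)$, followed by the identification of the result with $\rho^1$ and the computation that its derived representation is $-\tfrac{2\pi i}{h}\,\mathcal Q^h H_{(X,f)}$. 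Without these steps the second half of Theorem A is asserted rather than proved.
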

\noindent The group $\GM$ is often considered in relation to quantum mechanics on $M$, even beyond the setting of geometric quantization. Guillemin and Sternberg \cite[Section~29]{GuilleminSternberg} have considered coadjoint orbits of certain subgroups of $\GM$, while Goldin, Menikoff and Sharp \cite{GoldinEtAl80} have obtained the quantum mechanics of multi-particle systems on $M = \Rd$ by considering the representation theory of (a completion of) $\GM$. There has also been considerable work on the orbit method for infinite-dimensional Lie groups more broadly \cite{Gay-BalmazFrançois2024TRSD, HallerStefan2024Wnfm, Khesin_Wendt}.

The character formula is an important facet of the orbit method. Suppose we have a unitary representation $\rho \colon G \to \mathcal U(H)$ of a finite-dimensional Lie group $G$ obtained via the orbit method from a coadjoint orbit $S \subset \mathfrak g^*$, where $\mathcal U(H)$ denotes the unitary operators on the Hilbert space $H$. If $H$ is finite-dimensional, the character formula states that, under certain circumstances, 
\begin{align}
    \label{eq:fin_dim_character_formula}
    \tr(\rho_{\exp Z}) = \int_S e^{2 \pi i \langle \phi, Z \rangle} dV(\phi) \quad \text{for } Z \in \mathfrak g,
\end{align}
where $\langle \cdot, \cdot \rangle\colon \mathfrak g^* \times \mathfrak g \to \R$ is the natural pairing and $dV$ is a multiple of the measure determined by the symplectic volume form. In the case of a not necessarily finite-dimensional Hilbert space $H$, the \textit{distributional character} is defined as $\Theta_\rho \colon C_c^\infty(G) \to \C$, $\Theta_\rho(f) = \tr\, \rho(f)$, where $\rho \colon C_c^\infty(G) \to \mathcal B(H)$ is the integrated representation and $\mathcal B(H)$ denotes the bounded operators on $H$. The character formula for the distributional character states that, under certain circumstances,
\begin{align*}
    \Theta_\rho( f ) = \tr \,\rho(f) = \int_{S} \mathcal F(f \circ \exp) \,dV \quad \text{for } f \in C_c^\infty(G), 
\end{align*}
where $\mathcal F\colon L^2(\mathfrak{g}) \rightarrow L^2(\mathfrak{g}^\ast)$ is the Fourier transform (defined using the normalized Lebesgue measure on $\mathfrak g$ determined by the Haar measure on $G$). We may think of \eqref{eq:fin_dim_character_formula} as the distributional character formula applied to the delta-function $\delta_{\exp Z}$, since the Fourier transform of $\delta_Z = \delta_{\exp Z} \circ \exp$ is $\mathfrak g^* \ni \phi \mapsto e^{2\pi i \langle \phi, Z \rangle}$ (when $\exp$ is injective).

If $G$ is an infinite-dimensional Lie group, then neither the Haar measure nor the Lebesgue measure is available, hence there is no integrated representation or Fourier transform. This leaves us unable to even define the distributional character. A slight change of perspective, however, enables us to formulate an analogue of the character formula for the coadjoint orbit $T^*M \subset \gM^*$. From the point of view of quantization \cite{Landsman_Topics}, the character formula in the finite-dimensional setting can be expressed as follows:\ We begin with the function $a \coloneqq \mathcal F(f \circ  \exp)$ on $\mathfrak g^*$, whose quantization is $\mathcal Q(a) \coloneqq \rho(f)$, and the character formula becomes
\begin{align*}
    \tr \, \mathcal Q(a) = \int_S a \,d V_S.
\end{align*}
In the case of $\GM$, there are various quantization procedures taking functions on the coadjoint orbit $T^*M \subset \gM^*$ to operators on $L^2(M)$, in a manner that extends geometric quantization, and they satisfy variants of the trace formula above \cite{Landsman_Topics, CariñenaEtAl} (see also \Cref{rem:character_formula}). Using the tangent groupoid of $M$, we can consider the more flexible (and physical) notion of \textit{dequantization}. Namely, we can make sense of the statement that a family of operators $(T_h)_{h \in (0, 1]}$ on $L^2(M)$ converges to a function $a \colon T^*M \to \C$ as $h \to 0$, and any sensible quantization scheme $a \mapsto (\mathcal Q^h a)_{h \in (0, 1]}$ should then have the property that $\mathcal Q^h a \to a$ as $h \to 0$. Here, $h \in [0, 1]$ is referred to as a \textit{semiclassical parameter} and $h \to 0$ as the \textit{classical limit}. With this in mind, we refer to the following result as an asymptotic character formula for the coadjoint orbit $T^*M \subset \gM^*$. See \Cref{prop:easy_trace_formula} for the precise statement.
\begin{mainthm} \label{mainthm2}
    If a family $(T_h)_{h \in (0, 1]}$ of sufficiently nice trace class operators converges to $a \in C_0(T^*M)$ as $h \to 0$, then
    \begin{align*}
        \lim_{h \to 0} h^d \tr \,T_h = \int_{T^*M} a \,dV_L,
    \end{align*}
    where $V_L$ denotes the Liouville measure on $T^*M$ and $d \coloneqq \dim M$.
\end{mainthm}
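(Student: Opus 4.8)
The plan is to turn the formula into a dominated-convergence statement about Schwartz kernels, using Connes' tangent groupoid $\TGM$ to give precise meaning to the convergence $T_h \to a$. Recall that $\TGM$ is the Lie groupoid over $M\times[0,1]$ which restricts to the pair groupoid $M\times M$ over each level $h>0$ and to the abelian group bundle $TM$ over the level $h=0$; one equips it with the Haar system that is $h^{-d}\vol$ on the level $h>0$ and the $\vol$-induced Lebesgue measure on the fibres of $TM$ on the level $h=0$, which makes the associated field of $C^*$-algebras a continuous field over $[0,1]$ whose fibre is $\mathcal K(L^2(M,\vol))$ for $h>0$ and $C^*(TM)\cong C_0(T^*M)$ at $h=0$, the last isomorphism being the fibrewise Fourier transform. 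I would spell out ``sufficiently nice'' so that the family $(T_h)_h$ together with $a$ forms a section of this field arising from some $f\in C^\infty_c(\TGM)$; then $T_h$ has smooth Schwartz kernel $k_h(x,y)=h^{-d}f(x,y,h)$ with respect to $\vol$, and $a$ is the fibrewise Fourier transform of the restriction $f_0$ of $f$ to the level $h=0$, a compactly supported smooth function on $TM$.

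The first step is then immediate: since $T_h$ is trace class with continuous kernel,
\[
  h^{d}\,\tr\,T_h \;=\; h^{d}\!\int_M k_h(x,x)\,\vol(x) \;=\; \int_M f(x,x,h)\,\vol(x).
\]
For the second step, the gluing condition built into the smooth structure of $\TGM$ forces $f(x,x,h)$ to converge, as $h\to 0$, to $f_0(x,0_x)$ (the value of $f_0$ at the zero vector of $T_xM$), uniformly in $x$, while all the functions $x\mapsto f(x,x,h)$ are supported in one fixed compact subset of $M$ because $\supp f$ is compact. Hence the integral over $M$ passes to the limit and $\lim_{h\to0}h^{d}\,\tr\,T_h=\int_M f_0(x,0_x)\,\vol(x)$. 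The third step is fibrewise Fourier inversion: $f_0(x,0_x)=\int_{T^*_xM}a(x,\xi)\,d\xi_x$, where $d\xi_x$ is the measure on $T_x^*M$ dual, under the chosen normalisation, to the $\vol$-induced measure on $T_xM$; a short coordinate check shows that $\vol(x)\otimes d\xi_x$ is precisely the Liouville measure $dV_L$ (which, accordingly, does not depend on $\vol$). Chaining the three identities yields $\lim_{h\to0}h^{d}\,\tr\,T_h=\int_{T^*M}a\,dV_L$; since the fibrewise Fourier transform sends $C^\infty_c(TM)$ onto a dense subspace of $C_0(T^*M)$ this covers every $a$ that occurs, and the statement for general $a\in C_0(T^*M)$ then follows by approximation once one has trace-norm control on the relevant families.

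The only genuinely delicate point, and the main obstacle, is the meaning of ``sufficiently nice''. The operator trace is continuous for the trace norm but not for the $C^*$-norm carried by the tangent-groupoid field, so convergence $T_h\to a$ inside the field does not by itself imply the formula; one must build enough uniformity into the hypothesis, concretely that the renormalised kernels $h^{d}k_h=f(\cdot,\cdot,h)$ converge uniformly with support in a fixed compact set. This is automatic for sections coming from $C^\infty_c(\TGM)$, but isolating a natural hypothesis that still permits this control, and keeping the powers of $h$ and the Fourier/Haar normalisations mutually consistent so that the constant multiplying $\int a\,dV_L$ is exactly $1$ (which is what pins down the $h^{-d}$ in the Haar system and forces the canonical Liouville measure rather than a metric-dependent multiple to appear), is where the care is needed.
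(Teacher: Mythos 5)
Your proposal is correct and follows essentially the same route as the paper's proof of \Cref{prop:easy_trace_formula}: interpret ``sufficiently nice'' as $(a,T_h)\in C_c^\infty(\TGM)$, write $h^d\tr T_h$ as the integral of the renormalized kernel over the diagonal, use the smooth compactly supported restriction to the closed submanifold $[0,1]\times M\subset\TGM$ to pass to the limit (the paper phrases this via dominated convergence rather than uniform convergence, an immaterial difference), and conclude by fibrewise Fourier inversion together with the decomposition of the Liouville measure as $dV_g\otimes d\lambda_x^*$. The only point beyond the paper is your closing remark about extending to general $a\in C_0(T^*M)$ by approximation, which indeed requires extra trace-norm control and is not claimed in the paper's statement.
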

\noindent Moreover, letting $\rho^h$ denote the representations of $\GM$ obtained via Kirillov's method, we show that for $Z \in \gM$, the function $\eta \mapsto e^{-2 \pi i \langle \eta, Z \rangle}$ on $T^*M \subset \gM^*$ is the classical limit of the family $(\rho^h_{\exp h Z})_{h \in (0, 1]}$ (\Cref{thm:double_centralizer}). This gives:
\begin{mainthm}
    With the assumptions of \Cref{mainthm2}, and for any $Z \in \gM$, 
    \begin{align*}
    \lim_{h \to 0} h^d \tr \bigl( \rho^h_{\exp h Z} \,T_h) = \int_{T^*M} a(\eta) e^{-2\pi i \langle \eta, Z \rangle} \,dV_L(\eta).
\end{align*}
\end{mainthm}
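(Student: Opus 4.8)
The plan is to deduce the formula by applying \Cref{mainthm2} to the modified family $(\rho^h_{\exp hZ}\,T_h)_{h\in(0,1]}$, so the argument is essentially formal once two ingredients are in place: the classical limit of $(\rho^h_{\exp hZ})_{h\in(0,1]}$ identified in \Cref{thm:double_centralizer}, and the multiplicativity of the dequantization (classical-limit) procedure. Write $b\colon T^*M\to\C$, $b(\eta)=e^{-2\pi i\langle\eta,Z\rangle}$; by \Cref{thm:double_centralizer} the family $(\rho^h_{\exp hZ})_{h\in(0,1]}$ has classical limit $b$. Since $b$ is bounded and unimodular and $a\in C_0(T^*M)$, the pointwise product $ab$ again lies in $C_0(T^*M)$. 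Moreover each $\rho^h_{\exp hZ}$ is a unitary operator on $L^2(M)$, Kirillov's construction yielding a unitary representation, so $\rho^h_{\exp hZ}\,T_h$ is trace class with $\|\rho^h_{\exp hZ}\,T_h\|_1\le\|T_h\|_1$; by cyclicity of the trace the ordering of the factors is immaterial, as is $ab=ba$ on the right-hand side.

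Next I would establish that $(\rho^h_{\exp hZ}\,T_h)_{h\in(0,1]}$ converges to $ab$ as $h\to0$ in the sense of the tangent-groupoid dequantization underlying \Cref{mainthm2}. The point is that this convergence is the one coming from a continuous field of C*-algebras over $[0,1]$ whose fibre at $h=0$ is the commutative C*-algebra $C_0(T^*M)$ with pointwise product; the family $h\mapsto\rho^h_{\exp hZ}$ is a (strictly) continuous section of the associated field of multiplier algebras, with value the bounded function $b$ over $h=0$ (this is the content of \Cref{thm:double_centralizer}), and such multiplier sections carry continuous sections of the field to continuous sections, with value at $h=0$ the pointwise product of the limits. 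Applying this to $h\mapsto\rho^h_{\exp hZ}$ and to the section $h\mapsto T_h$ (limit $a$, by hypothesis) gives $\rho^h_{\exp hZ}\,T_h\to ab$.

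The main obstacle is verifying that $(\rho^h_{\exp hZ}\,T_h)_{h\in(0,1]}$ is \emph{sufficiently nice} in the precise sense required by \Cref{mainthm2} (\Cref{prop:easy_trace_formula}), i.e.\ that the class of trace-class families to which the asymptotic character formula applies is stable under multiplication by the unitaries $\rho^h_{\exp hZ}$, uniformly in $h$. Decomposing $Z=(X,\varphi)\in\gM=\XcM\ltimes\CcMR$, the operator $\rho^h_{\exp hZ}$ is a composition of a (half-density–normalized) pullback by a compactly supported diffeomorphism built from the flow $\Fl^X$ of $X$ and a multiplication operator attached to $\varphi$; both are zeroth-order and compactly supported, and the expectation is that they preserve the relevant family class — e.g.\ a Schwartz-type subalgebra of the tangent-groupoid C*-algebra, or an algebra of semiclassical pseudodifferential/Fourier integral operators — so that the $h$-scaled trace extends continuously to it. Granting this, $(\rho^h_{\exp hZ}\,T_h)_{h\in(0,1]}$ is an admissible family with classical limit $ab\in C_0(T^*M)$, and \Cref{mainthm2} yields
\begin{align*}
\lim_{h\to0}h^d\tr\bigl(\rho^h_{\exp hZ}\,T_h\bigr)=\int_{T^*M}a(\eta)\,b(\eta)\,dV_L(\eta)=\int_{T^*M}a(\eta)\,e^{-2\pi i\langle\eta,Z\rangle}\,dV_L(\eta),
\end{align*}
which is the assertion.
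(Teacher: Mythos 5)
Your overall strategy is the paper's: the precise statement (\Cref{cor:character_formula}) is proved by combining \Cref{thm:double_centralizer} with \Cref{prop:easy_trace_formula}, exactly the reduction you propose. The difference is that you leave the decisive step hedged. The ``sufficiently nice'' class in \Cref{mainthm2} is precisely $C_c^\infty(\TGM)$, and what the argument needs is that $(a,T_h)\in C_c^\infty(\TGM)$ implies $\bigl(a e^{-2\pi i H_{(X,f)}},\,\rho^h_{\exp(hX,hf)}T_h\bigr)\in C_c^\infty(\TGM)$. This is not an ``expectation'' to be granted: it is exactly the ``moreover'' clause of \Cref{thm:double_centralizer} (the map $L_{(X,f)}$ preserves $C_c^\infty(\TGM)$), and proving it is the technical heart of that theorem, carried out via the extension lemmas \Cref{lem:scalar_extension_to_TGM} and \Cref{lem:diffeo_extension_to_TGM} applied to the flow of $X$, the averaged phase $f_h$, and the Radon--Nikodym factor. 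Had you invoked that clause, your proof would be complete and identical in substance to the paper's one-line proof; as written, the key claim is assumed rather than established or cited.

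Be aware also that your intermediate continuous-field argument cannot substitute for this. Strict continuity of the multiplier section $h\mapsto\rho^h_{\exp hZ}$ does show that $h\mapsto\rho^h_{\exp hZ}T_h$ is again a continuous section, i.e.\ an element of $C^*(\TGM)$ with value $ab$ at $h=0$, but membership in $C^*(\TGM)$ only controls operator norms. It neither guarantees that the fibers are trace class nor that $h^d\tr$ converges; \Cref{prop:easy_trace_formula} is proved by restricting a smooth compactly supported kernel on $\TGM$ to the diagonal and applying dominated convergence, so it genuinely uses membership in the smooth subalgebra $C_c^\infty(\TGM)$, not just in its C$^*$-completion. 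So the gap you flag is real and is the whole content to be supplied; everything else in your argument (unitarity of $\rho^h_{\exp hZ}$, cyclicity, $ab\in C_0(T^*M)$) is correct but peripheral.
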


\noindent This may be thought of as a regularized asymptotic version of the finite-dimensional character formula \eqref{eq:fin_dim_character_formula}. See \Cref{cor:character_formula} for the precise statement.

In summary, even though neither the Fourier transform on the Lie algebra nor the Haar measure on the group are available, we can formulate analogues of the character formula for the coadjoint orbit $T^*M \subset \gM^*$ by employing the tangent groupoid. The key to these results is \Cref{lem:diffeo_extension_to_TGM}, an extension lemma for diffeomorphisms of the tangent groupoid, which also allows us to generalize (and give a very different proof of) a result regarding Weyl quantization on the tangent groupoid, originally due to Landsman \cite{Landsman1993}. In our context, the result relates the coadjoint action of $\GM$ on $T^*M \subset \gM^*$ to conjugation by $\rho^h$ on $\mathcal B(L^2(M))$:\ the former is the classical limit of the latter (\Cref{prop:translation_of_ops}).

\begin{setup}[Conventions and Notations] \label{setup:notation}
    We write $\N = \{1,2,3,\ldots\}$ for the natural numbers and $\N_0 \coloneq \N \cup \{0\}$. By default, every finite-dimensional manifold, whose dimension will be denoted by $d$, is assumed to be $\sigma$-compact\footnote{Recall that a topological space is $\sigma$-compact if it is a countable union of compact subsets.} and connected. This implies in particular that it is paracompact and admits a Riemannian metric $g$. In order to avoid the unnecessary assumption of orientability, we work with densities instead of volume forms for integration on manifolds.
 
    We write $C^\infty (M,N)$ for the set of smooth functions between two smooth manifolds, and if $E$ is a vector space, we write $C^\infty_c (M,E)$ for the space of compactly supported functions. This is shortened to $C^\infty (M)\coloneq C^\infty (M,\mathbb{C})$ (and similarly for the compactly supported functions). We write $\X(M)$ for the space of vector fields on $M$ (all vectors fields are assumed to be smooth) and $X_x$ for the value of $X \in \X(M)$ at $x \in M$, while a generic vector in $T_xM$ is denoted $v_x$. Similarly, a generic element of $T_x^*M$ is denoted $\eta_x$, as is the evaluation of a (smooth) one-form at $x$.
\end{setup}

\begin{setup}[Coordinates]\label{setup:coordinates} Let $M$ be a finite-dimensional manifold and $q \colon M \supseteq U \to \Rd$ be a chart. We write $(q, v) \colon TU \to \R^{2d}$ and $(q,p) \colon T^*U \to \R^{2d}$ for the associated \textit{tangent} and \textit{cotangent coordinates}. These are defined, for $x \in U$, by 
\begin{align*}
    T_xM \ni \sum\nolimits_{j=1}^d v^j \frac{\partial}{\partial q^j}\Bigl|_x &\mapsto \bigl(q^1(x), \ldots,  q^d(x), v^1, \ldots, v^d \bigr) \in \R^{2d} \\
   \text{and} \quad T_x^*M \ni \sum\nolimits_{j=1}^d p_j dq^j_{x} &\mapsto \bigl( q^1(x), \ldots, q^d(x), p_1, \ldots, p_d \bigr) \in \R^{2d}.
\end{align*}
If $g$ is a Riemannian metric on $M$, we let $g_q \colon U \to \mathrm{GL}(2d, \R)$ be the induced matrix representation of the metric, i.e.\ $g_q(x)_{ij} \coloneqq g(\frac{\partial}{\partial q^i}|_x, \frac{\partial}{\partial q^j}|_x)$. For the coordinate representation $q(U) \to  \mathrm{GL}(2d, \R)$ of $g_q$, we write $g(q)$.  Recall the \textit{Riemannian density} $dV_g$ on $M$ \cite[Proposition 16.45]{Lee13}, given in coordinates by
\begin{align*}
    d\vol(q) = \sqrt{|\det g(q)|} \, d^d q \coloneqq \sqrt{|\det g(q)|} \cdot | dq^1 \wedge \cdots \wedge dq^d |.
\end{align*}
We refer to the corresponding Borel measure $V_g(A) \coloneqq \int_A d\vol$ as the \textit{Riemannian measure} and write $L^2(M) \coloneqq L^2(M, V_g)$ (functions are assumed to be $\C$-valued).
\end{setup}

\paragraph{Acknowledgements}

We would like to thank Franz Luef for many valuable conversations and for the impetus to explore these structures in the first place.

\section{Geometric Quantization on Cotangent Bundles}\label{sec:geom_quant}

This section begins with a review of geometric quantization on a cotangent bundle $T^*M$ and its connection with a family of representations $\rho^h \colon \GM \to \mathcal U(L^2(M))$. This involves the moment map $\mu \colon T^*M \to \gM^*$ for a particular Hamiltonian action $\alpha^0 \colon \GM \to \Diff(T^*M)$. This moment map is shown to be a smooth embedding of a coadjoint orbit in \Cref{sec:cotangent_bundle_as_coadjoint_orbit}. In \Cref{sec:geometric_quantization_result}, we show how to obtain geometric quantization on $T^*M$ via Kirillov's quantization procedure. 

Geometric quantization on a symplectic manifold $S$ involves multiple choices. Most crucial is the choice of a \textit{polarization}, which is a foliation of $S$ whose leaves are Lagrangian submanifolds. In the case of a cotangent bundle $S = T^*M$, the canonical choice is the \textit{vertical polarization}: its leaves are the fibers of the bundle and the space of leaves is identified with $M$. The (straightforwardly) quantizable functions on the cotangent bundle (with the vertical polarization) are those that are affine on each fiber, each of which can be written as
\begin{align*}
    H_{(X, f)}(\eta_x) \coloneqq \eta_x(X_x) + f(x) \quad \text{for } X \in \X(M) \text{ and } f \in C^\infty(M, \R),
\end{align*}
where $\eta_x \in T^*M$. Choosing a Riemannian metric on $M$, we can identify the quantum Hilbert space with $L^2(M)$. For each $h \in (0, 1]$, the quantization $Q^h H_{(X, f)}$ of $H_{(X, f)}$ will then act on $\psi$ in its domain $D_{(X, f)} \subset L^2(M)$ as
\begin{align*}
    \mathcal Q^h H_{(X, f)} \psi = f \cdot \psi - \frac{ih}{2\pi} \Bigl( X \psi + \frac12(\mathrm{div}\,X) \cdot \psi \Bigr),
\end{align*}
where $\mathrm{div}\,X$ is the divergence of $X$ and $\cdot$ denotes pointwise multiplication. See Hall \cite[Example~23.48]{Hall13} for details.\footnote{The expression for $\mathcal Q^h H_{(X, f)}$ in \cite{Hall13} has different signs, due to different conventions. See the lecture notes by Schottenloher \cite{Schottenloher} for an exposition using our conventions.} In order to obtain a common domain, say $C_c^\infty(M) \subset L^2(M)$, we impose compact support conditions:\ $X \in \X_c(M)$ and $f \in C_c^\infty(M, \R)$. We will now see that these operators arise as the derived version of a representation $\rho^h$ of an infinite-dimensional Lie group $\GM$, and that the functions $H_{(X, f)}$ are the Hamiltonians for an action of $\GM$ on $T^*M$. All of this, except for a discussion of the manifold structure on $\GM$, can be found in Landsman \cite{Landsman_Topics}.

Consider the group $\DiffcM$ of compactly supported diffeomorphisms of $M$ as an infinite-dimensional Lie group \cite{Neeb06}. We recall essentials on the manifold structure and function space topologies in \Cref{app:mfd_smooth_maps}. On the locally convex space $C_c^\infty(M, \R)$ of compactly supported smooth functions on $M$, we have a natural Lie group action by $\DiffcM$, namely 
\begin{align}
   \gamma(\phi,f) \coloneqq f \circ \phi^{-1} \quad \text{for } \phi \in \DiffcM \text{ and } f \in C_c^\infty(M, \R).
\end{align} 
As in the finite-dimensional setting, we can consider the semidirect product
\begin{align*}
    \GM \coloneqq \DiffcM \ltimes_\gamma \CcMR
\end{align*}
of Lie groups by viewing the compactly supported functions as an abelian Lie group. The product and inverse in $\GM$ are given by
\begin{align}
    \label{eq:prod_and_inv}
    (\phi, f) (\theta, g) = (\phi \circ \theta, g \circ \phi^{-1} + f) \quad \text{and} \quad (\phi, f)^{-1} = (\phi^{-1}, - f \circ  \phi).
\end{align}
The Lie algebra of the semidirect product $\GM$ is the semidirect sum
\begin{align*}
    \gM = \XcM \ltimes_{\mathrm{d}\gamma} \CcMR,
\end{align*}
where $\XcM$ is the space of compactly supported smooth vector fields on $M$ with the negative of the Lie bracket of vector fields. The Lie bracket on $\gM$ is 
\begin{align}
    \label{eq:Liebracket}
    \bigl[ (X,f), (Y, g) \bigr] = \bigl( -[X, Y], - Xg + Yf \bigr),
\end{align}
(see \Cref{gm:Liebracket}) and the exponential map $\exp \colon \gM \to \GM$ is given by
\begin{align}
    \label{eq:exp_map}
    \exp(X, f) = \biggl( \Fl^X_1,  \int_0^1 f \circ \Fl^{-X}_{t} \, dt \biggr),
\end{align}
where $\Fl^X \colon [0,1] \rightarrow \Diff_c(M), t \mapsto \text{Fl}^X_t$ is the flow of $X$, i.e.\ $\Fl^X_0 = \id_M$ and $\left.\ddt\right|_{t=s}\Fl^X_t = X \circ \Fl_s^X$ for all $s \in [0, 1]$, and the integral on $C_c^\infty(M, \R)$ is to be interpreted in the sense of weak integrals. See \cite[Section~38]{KM97} for details.

There is a natural action of $\GM$ on the cotangent bundle $T^*M$, which we denote by $\alpha^0 \colon \GM \to \Diff(T^*M)$. For $(\phi, f) \in \GM$, it is given as
\begin{align}
    \label{eq:alpha_0}
    \alpha^0_{(\phi, f)}(\eta_x) = (\phi^{-1})^* \eta_x - df_{\phi(x)} \quad \text{for } x \in M \text{ and } \eta_x \in T_x^*M,
\end{align}
where $\alpha^0_{(\phi, f)} \coloneqq \alpha^0(\phi, f) \in \Diff(T^*M)$ and $(\phi^{-1})^*\eta_x \coloneqq \eta_x \circ T_{\phi(x)}\phi^{-1}$ is the pullback of $\eta_x$ by $\phi^{-1}$. The proof of the next proposition is defered to Appendix \ref{app:moment_map}. For the notion of moment maps and comoment maps, we refer to \cite[Chapter~22]{SilvaAna}.\footnote{The equivariance condition on (co)moment maps in \cite{SilvaAna} is not always imposed. In particular, we do not show equivariance in our proof of \Cref{prop:moment_map}, but it trivially follows from our main result, \Cref{thm:coadjoint_orbit}.}

\begin{prop}[{The Moment Map of $\alpha^0$,  \cite[Section~29]{GuilleminSternberg}}]
    \label{prop:moment_map}
    The action $\alpha^0 \colon \GM \to \Diff(T^*M)$ given by \eqref{eq:alpha_0} is Hamiltonian, with comoment map 
    \begin{align*}
        H \colon \gM &\to C^\infty(T^*M) \\
        (X, f) &\mapsto H_{(X, f)} \colon \eta_x \mapsto \eta_x(X_x) + f(x).
    \end{align*}
    Consequently, the moment map is 
    \begin{align*}
        \cO \colon T^*M &\to \gM^* \\ 
        \eta_x &\mapsto (\eta_x, \delta_x) \colon (X, f) \mapsto \eta_x(X_x) + f(x),
    \end{align*}
    where $\gM^*$ is the continuous dual of $\gM$.
\end{prop}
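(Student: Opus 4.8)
The plan is to verify the three assertions directly; the second and third are essentially formal once the first is in place. Write $\pi\colon T^*M\to M$ for the bundle projection, $\lambda$ for the canonical (Liouville) $1$-form on $T^*M$, and $\omega=-d\lambda$ for the canonical symplectic form. The first step is to show that every $\alpha^0_{(\phi,f)}$ is a symplectomorphism. Using the factorization $(\phi,f)=(\phi,0)\,(\id,f\circ\phi)$ in $\GM$ (cf.\ \eqref{eq:prod_and_inv}), it suffices to treat the two factors. The map $\alpha^0_{(\phi,0)}=(\phi^{-1})^{*}$ is the cotangent lift of $\phi$, which preserves $\lambda$ itself and hence $\omega$; the map $\alpha^0_{(\id,f)}$ is fibrewise translation by $-df$, which by the standard identity for translations sends $\lambda$ to $\lambda-\pi^{*}df$ and therefore $\omega$ to $\omega+\pi^{*}d(df)=\omega$. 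So $\alpha^0$ is a symplectic action.

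Next I would compute the fundamental vector fields. Since the assignment $\gM\to\X(T^*M)$, $Z\mapsto Z_{T^*M}$, is linear, it is enough to treat $(X,0)$ and $(0,f)$ separately, and for these \eqref{eq:exp_map} gives the one-parameter subgroups $\exp\bigl(t(X,0)\bigr)=(\Fl^X_t,0)$ and $\exp\bigl(t(0,f)\bigr)=(\id,tf)$. Consequently $(X,0)_{T^*M}$ is the cotangent lift of $X$ and $(0,f)_{T^*M}$ is the vertical vector field $\eta_x\mapsto-(df)_x$. A computation in cotangent coordinates $(q,p)$ (\Cref{setup:coordinates}), with $\lambda=\sum\nolimits_j p_j\,dq^j$, then identifies these with the Hamiltonian vector fields of the momentum function $P_X\colon\eta_x\mapsto\eta_x(X_x)$ and of $\pi^{*}f\colon\eta_x\mapsto f(x)$, respectively. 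By linearity, $(X,f)_{T^*M}$ is the Hamiltonian vector field of $H_{(X,f)}=P_X+\pi^{*}f$, i.e.\ $dH_{(X,f)}=\iota_{(X,f)_{T^*M}}\omega$. Hence $\alpha^0$ is Hamiltonian with candidate comoment map $H\colon(X,f)\mapsto H_{(X,f)}$, which is manifestly linear and is continuous because point evaluation of vector fields and of functions are continuous on the LF-spaces $\XcM$ and $\CcMR$.

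To see that $H$ is a genuine comoment map in the sense of \cite[Chapter~22]{SilvaAna}, i.e.\ a morphism of Lie algebras into $\bigl(C^\infty(T^*M),\{\cdot,\cdot\}\bigr)$, one checks $\{H_{(X,f)},H_{(Y,g)}\}=H_{[(X,f),(Y,g)]}$. Expanding the Poisson bracket bilinearly and using the classical identities $\{P_X,P_Y\}=P_{-[X,Y]}$, $\{P_X,\pi^{*}g\}=-\pi^{*}(Xg)$ and $\{\pi^{*}f,\pi^{*}g\}=0$, the left-hand side equals $P_{-[X,Y]}+\pi^{*}(Yf-Xg)$, which is exactly $H_{(-[X,Y],\,Yf-Xg)}$ by \eqref{eq:Liebracket}. (Alternatively, this property follows from \Cref{thm:coadjoint_orbit}, as the footnote indicates.) Transposing $H$ finally yields the moment map: $\cO(\eta_x)\in\gM^{*}$ is the functional $(X,f)\mapsto H_{(X,f)}(\eta_x)=\eta_x(X_x)+f(x)$, that is, the pair $(\eta_x,\delta_x)$ of the statement, and it lies in the continuous dual by the continuity observed above.

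The only real subtlety is the bookkeeping of sign conventions: the nonstandard (negative) Lie bracket on $\XcM$, the inverse in the cotangent-lift part of $\alpha^0$ which makes $\alpha^0$ a \emph{left} action, and the chosen normalizations $\omega=-d\lambda$ and $\iota_{X_H}\omega=dH$. Once these are fixed, the computations reduce to the classical ones for cotangent-lifted actions together with fibre translations, carried out in a single chart, so I expect no genuine difficulty beyond carefully tracking those signs.
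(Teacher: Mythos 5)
Your argument is correct, and it reaches the conclusion by a somewhat different route than the paper. The paper (Appendix~\ref{app:moment_map}) computes the derived action $d\alpha^0_{(X,f)}$ directly in cotangent coordinates, using the curve $t\mapsto(\Fl^X_t,tf)$, and then verifies $\iota_{d\alpha^0_{(X,f)}}\omega'=dH_{(X,f)}$ by a second coordinate computation; it deliberately does \emph{not} check any equivariance or bracket condition at this stage (see the footnote to \Cref{prop:moment_map}), since that follows from \Cref{thm:coadjoint_orbit}. You instead factor $(\phi,f)=(\phi,0)(\id,f\circ\phi)$ and, on the Lie algebra side, split $(X,f)=(X,0)+(0,f)$, reducing everything to the two classical facts that cotangent lifts preserve the tautological form $\lambda$ and have momentum functions $P_X$, while fibre translation by $-df$ is generated by the Hamiltonian vector field of $\pi^*f$; linearity of $Z\mapsto Z_{T^*M}$ then gives $dH_{(X,f)}=\iota_{(X,f)_{T^*M}}\omega$. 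This is a valid and arguably more structural organization of the same underlying computation, and your sign bookkeeping is consistent with the paper's conventions ($\omega'=\sum_k dq^k\wedge dp_k$, the negative bracket on $\XcM$, and \eqref{eq:Liebracket}). Your additional verification that $\{H_{(X,f)},H_{(Y,g)}\}=H_{[(X,f),(Y,g)]}$ is exactly the content of \eqref{eq:symp_form_in_proof_of_diffeo}, which the paper proves separately in Appendix~\ref{app:moment_map} for use in \Cref{thm:coadjoint_orbit}; including it makes your proof slightly stronger than what the paper establishes at this point, at the cost of redoing work the paper postpones. The remarks on symplectomorphy of each $\alpha^0_{(\phi,f)}$ and on continuity of $H$ are harmless extras not required by the statement.
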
 


Given a Riemannian metric $g$ on $M$, there is also an $h$-dependent family of unitary representations of $\GM$ on $L^2(M)$, with $h \in (0, 1]$, which we denote by $\rho^h \colon \GM \to \mathcal U(L^2(M))$. These are defined, for $(\phi, f) \in \GM$ and $\psi \in L^2(M)$, by
\begin{align}
    \label{eq:def_of_rho}
    \rho^h_{(\phi, f)}\psi(x) = e^{-2 \pi i f(x) /h} \sqrt{\frac{d (\phi^{-1})^* \vol}{d\vol}}(x) \psi \bigl(\phi^{-1}(x) \bigr),
\end{align}
where $(\phi^{-1})^*\vol$ is the pullback of the Riemannian measure $\vol$ under $\phi^{-1}$, i.e.\ $(\phi^{-1})^*\vol(A) = \vol(\phi^{-1}(A))$ for all measurable sets $A \subset M$. It will be convenient to have an explicit expression for the Radon-Nikodym derivative. For the proof, see \Cref{app_Radon_Nikodym}.

\begin{lemma}[Radon-Nikodym Derivative in Coordinates]
    \label{lem:Radon_Nikodym_formula}
    Let $\phi \in \Diff_c(M)$ and $q \colon U \to \Rd$ be coordinates. Then, 
    \begin{align*}
        \frac{d (\phi^{-1})^* \vol}{d\vol}(x) = \Biggl| \frac{\det g_{q \circ \phi}(\phi^{-1}(x))}{\det g_q(x)} \Biggr|^{\frac12}  =  \Biggl| \frac{\det g\bigl(T\phi^{-1} \frac{\partial}{\partial q^i}\big|_x, T\phi^{-1} \frac{\partial}{\partial q^j}\big|_x \bigr)}{\det g \bigl(\frac{\partial}{\partial q^i}\big|_x, \frac{\partial}{\partial q^j}\big|_x \bigr)} \Biggr|^{\frac12}
    \end{align*}
    for all $x \in U$. In particular, this is a smooth function on $M$.
\end{lemma}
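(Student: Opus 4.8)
The plan is to reduce the identity to a one-line change of variables carried out in a single chart, with the auxiliary chart $q\circ\phi$ doing the real work. First I would record that the Radon--Nikodym derivative $\frac{d(\phi^{-1})^*\vol}{d\vol}$ is a local, chart-independent quantity, so it is enough to fix $x\in U$ and compute in the chart $q$ on $U$. Recall that $(\phi^{-1})^*\vol$ is the pushforward measure $A\mapsto\vol(\phi^{-1}(A))$, and that $\widetilde q\coloneqq q\circ\phi\colon\phi^{-1}(U)\to\Rd$ is again a chart (since $\phi$ is a diffeomorphism); in the pair of charts $\widetilde q$ on $\phi^{-1}(U)$ and $q$ on $U$, the map $\phi$ is represented by the identity of $q(U)\subseteq\Rd$.

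Next, for a measurable $A\subseteq U$, I would write both measures in coordinates using the local expression $\sqrt{|\det g_q|}\,d^d u$ for the Riemannian density. On the one hand, $\vol(A)=\int_{q(A)}\sqrt{|\det g_q(q^{-1}(u))|}\,d^d u$. On the other hand, computing $\vol(\phi^{-1}(A))$ in the chart $\widetilde q$ on $\phi^{-1}(U)$ and using $\widetilde q(\phi^{-1}(A))=q(A)$ together with $\widetilde q^{\,-1}=\phi^{-1}\circ q^{-1}$ gives
\[
(\phi^{-1})^*\vol(A)=\vol(\phi^{-1}(A))=\int_{q(A)}\sqrt{\bigl|\det g_{q\circ\phi}\bigl(\phi^{-1}(q^{-1}(u))\bigr)\bigr|}\,d^d u.
\]
Both integrands are continuous and strictly positive (they are square roots of Gram determinants of the positive-definite form $g$), so the Radon--Nikodym derivative at $x=q^{-1}(u)$ is their pointwise quotient, which is exactly the first claimed formula; in particular it is smooth on $U$.

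For the second equality I would use the identification $\frac{\partial}{\partial(q\circ\phi)^i}\big|_{\phi^{-1}(x)}=T\phi^{-1}\frac{\partial}{\partial q^i}\big|_x$, which is immediate from $\phi$ being the identity in the charts $q\circ\phi$ and $q$ (so $T\phi$ sends the $i$-th coordinate basis vector of $q\circ\phi$ to that of $q$), and substitute it into $g_{q\circ\phi}(\phi^{-1}(x))_{ij}=g\bigl(\frac{\partial}{\partial(q\circ\phi)^i}\big|_{\phi^{-1}(x)},\frac{\partial}{\partial(q\circ\phi)^j}\big|_{\phi^{-1}(x)}\bigr)$. Smoothness on all of $M$ then follows because the right-hand side is manifestly smooth in $x$ (the maps $T\phi^{-1}$, $g$ and $\det$ are smooth and the denominator never vanishes) and, the left-hand side being chart-independent, these local functions glue to a single smooth function on $M$.

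The only real obstacle is bookkeeping: keeping the pullback versus pushforward conventions consistent, and checking carefully that $\phi$ becomes the identity map in the paired charts $q\circ\phi$ and $q$. Once that is in hand, there is no analysis left to do.
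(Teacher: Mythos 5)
Your proof is correct and follows essentially the same route as the paper's: both hinge on transporting the chart by $\phi$ (so that $\phi$ becomes the identity in coordinates), identifying the transported coordinate frame with $T\phi^{-1}\frac{\partial}{\partial q^i}\big|_x$, and reading off the Radon--Nikodym derivative as a quotient of coordinate densities. The only cosmetic difference is that you compare the two measures on measurable sets $A\subseteq U$, whereas the paper integrates test functions $\psi\in C_c^\infty(M)$ against both sides (and states the result slightly more generally for a diffeomorphism between two Riemannian manifolds); the substance is identical.
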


\noindent The derived representation $d\rho^h$ of $\gM$ corresponds to geometric quantization. Our convention is that $\rho^h_{\exp(X, f)} = \exp(d\rho^h_{(X, f)})$, where $d\rho^h_{(X, f)}$ is the skew-adjoint generator (given by Stone's theorem) of the one-parameter group $t \mapsto \rho^h_{\exp(tX, tf)}$, and $\exp(d\rho^h_{(X, f)})$ is defined using the spectral calculus \cite[Theorem~10.15]{Hall13}. For the proof, see \Cref{app_derived_rep}.

\begin{prop}
    \label{prop:derived_rep}
    For $(X, f) \in \gM$ and $\psi \in C_c^\infty(M)$,
    \begin{align*}
        d\rho^h_{(X, f)}\psi  = -\frac{2\pi i }{h}  f \cdot \psi  - \biggl(X\psi + \frac12 (\divv X) \cdot \psi \biggr) = -\frac{2\pi  i}{h} \mathcal Q^h H_{(X,f)}.
    \end{align*}
\end{prop}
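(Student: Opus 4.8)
The plan is to write the one-parameter group $t\mapsto\rho^h_{\exp(t(X,f))}\psi$ explicitly via \eqref{eq:exp_map}, differentiate it pointwise at $t=0$ by the product rule, and then upgrade the pointwise derivative to a limit in $L^2(M)$, so that Stone's theorem identifies it as $d\rho^h_{(X,f)}\psi$.

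First I would simplify $\exp(t(X,f))$. By \eqref{eq:exp_map} and the reparametrizations $\Fl^{tX}_1=\Fl^X_t$ and $\int_0^1 tf\circ\Fl^{-tX}_s\,ds=\int_0^t f\circ\Fl^{-X}_u\,du$ (substitute $u=ts$), one gets $\exp(t(X,f))=\bigl(\Fl^X_t,\ \int_0^t f\circ\Fl^{-X}_u\,du\bigr)$. Since $(\Fl^X_t)^{-1}=\Fl^{-X}_t$, inserting this into \eqref{eq:def_of_rho} yields
\[
\rho^h_{\exp(t(X,f))}\psi(x)=e^{-\frac{2\pi i}{h}\int_0^t (f\circ\Fl^{-X}_u)(x)\,du}\cdot\sqrt{\tfrac{d(\Fl^{-X}_t)^*\vol}{d\vol}}(x)\cdot\psi\bigl(\Fl^{-X}_t(x)\bigr),
\]
a product of three smooth functions of $(t,x)$ equal at $t=0$ to $1$, $1$ and $\psi(x)$ respectively.

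Next I would differentiate each factor at $t=0$. The exponential factor contributes $-\tfrac{2\pi i}{h}f(x)$, since $\tfrac{d}{dt}\big|_{t=0}\int_0^t(f\circ\Fl^{-X}_u)(x)\,du=f(x)$. The third factor contributes $-(X\psi)(x)$, because $\tfrac{d}{dt}\big|_{t=0}\psi\bigl(\Fl^{-X}_t(x)\bigr)=d\psi_x(-X_x)$. For the middle factor I would either use \Cref{lem:Radon_Nikodym_formula} to write the Radon--Nikodym derivative in coordinates and differentiate directly, or observe that $\tfrac{d}{dt}\big|_{t=0}(\Fl^{-X}_t)^*\vol=-\mathcal L_X\vol=-(\divv X)\vol$ (Cartan's formula and the definition of the divergence), so that $\tfrac{d}{dt}\big|_{t=0}\sqrt{d(\Fl^{-X}_t)^*\vol/d\vol}(x)=-\tfrac12(\divv X)(x)$. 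By the product rule the pointwise $t$-derivative at $0$ is $-\tfrac{2\pi i}{h}f\cdot\psi-\bigl(X\psi+\tfrac12(\divv X)\cdot\psi\bigr)$, which equals $-\tfrac{2\pi i}{h}\mathcal Q^hH_{(X,f)}\psi$ by the formula for $\mathcal Q^hH_{(X,f)}$ recalled in \Cref{sec:geom_quant}.

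Finally, I would check that this pointwise derivative is in fact the $L^2$-limit of the difference quotients $\tfrac1t\bigl(\rho^h_{\exp(t(X,f))}\psi-\psi\bigr)$; together with strong continuity of the one-parameter unitary group $t\mapsto\rho^h_{\exp(t(X,f))}$ this shows, via Stone's theorem, that $\psi\in C_c^\infty(M)$ lies in the domain of $d\rho^h_{(X,f)}$ and that $d\rho^h_{(X,f)}\psi$ is the claimed expression. Here the compact support hypotheses are what makes this clean: since $X\in\XcM$ and $\psi\in C_c^\infty(M)$, there is a compact $K\subset M$ and $\varepsilon>0$ with $\supp\bigl(\rho^h_{\exp(t(X,f))}\psi\bigr)\subset K$ for all $|t|<\varepsilon$, and on the compact box $[-\varepsilon,\varepsilon]\times K$ the map $(t,x)\mapsto\rho^h_{\exp(t(X,f))}\psi(x)$ is smooth (smoothness of the flow and of the Radon--Nikodym derivative from \Cref{lem:Radon_Nikodym_formula}), so the difference quotients converge uniformly on $K$, hence in $L^2$. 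I expect this uniform-to-$L^2$ upgrade, and the underlying joint smoothness in $(t,x)$, to be the only genuinely delicate point; the differentiation itself is routine.
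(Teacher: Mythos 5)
Your proposal is correct and follows the same overall skeleton as the paper's proof: write out $\rho^h_{\exp(t(X,f))}\psi$ explicitly from \eqref{eq:exp_map} and \eqref{eq:def_of_rho}, differentiate the three factors pointwise at $t=0$, upgrade the pointwise limit to an $L^2$-limit using the compact supports, and invoke Stone's theorem. The differences are in the two technical steps. For the volume-distortion factor, the paper works in normal coordinates at $x$ and uses Jacobi's formula together with the vanishing of the first derivatives of the metric to arrive at $-\tfrac12(\divv X)$, whereas you invoke the invariant identity $\ddt\big|_{t=0}(\Fl^{-X}_t)^*\vol=-\mathcal{L}_X\vol=-(\divv X)\,\vol$; this is cleaner, but note that $\vol$ is a \emph{density} (the paper avoids volume forms precisely to allow non-orientable $M$), so the justification is the Lie-derivative characterization of the divergence for densities (a local statement, hence still valid) rather than Cartan's formula, which is stated for differential forms — a misattribution, not a gap. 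Alternatively, your fallback of differentiating the coordinate expression from \Cref{lem:Radon_Nikodym_formula} is essentially what the paper does. For the $L^2$-upgrade, the paper argues via pointwise convergence, a uniform bound on the fixed compact support, and dominated convergence, while you use joint smoothness of $(t,x)\mapsto\rho^h_{\exp(t(X,f))}\psi(x)$ to get uniform convergence of the difference quotients on the common compact support (which is contained in $\supp\psi\cup\supp X$, as you say); both routes work, and your version also delivers the strong continuity needed for Stone's theorem by the same argument.
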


\subsection{Cotangent Bundles as Coadjoint Orbits}
\label{sec:cotangent_bundle_as_coadjoint_orbit}
In this section, we show that the cotangent bundle $T^*M$ is a coadjoint orbit of the group $\GM$. This does not involve a choice of metric on $M$. 

\begin{lemma}\label{lem:adjoint_action}
   The adjoint action $\Ad_{(\phi, f)} (X, g)$ of $\GM$ on $\gM$ is given by
    \begin{align*}
     \bigl( \Ad_\phi (X), \gamma(\phi, g) -\mathrm{d}\gamma(\Ad_\phi (X), f)  \bigr) = \bigl(  \Ad_\phi (X), g \circ\phi^{-1} + X(f \circ \phi) \circ \phi^{-1} \bigr),
    \end{align*}
    where $\Ad_\phi (X)=T\phi \circ X \circ \phi^{-1}$ is the adjoint action of the group $\Diff_c(M)$.
\end{lemma}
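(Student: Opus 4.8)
The plan is to compute $\Ad_{(\phi,f)}$ directly from its definition as the derivative of conjugation in $\GM$. Recall that for a Lie group $G$ with multiplication $c_g(h) = ghg^{-1}$, we have $\Ad_g = T_e c_g \colon \mathfrak g \to \mathfrak g$. So first I would fix $(\phi, f) \in \GM$ and $(X, g) \in \gM$, pick a smooth curve $t \mapsto (\theta_t, h_t)$ in $\GM$ with $(\theta_0, h_0) = (\id_M, 0)$ and $\frac{d}{dt}\big|_{t=0}(\theta_t, h_t) = (X, g)$ — for instance the one-parameter group $\exp(tX, tg)$ from \eqref{eq:exp_map}, though any representative curve works — and then differentiate $t \mapsto (\phi, f)(\theta_t, h_t)(\phi, f)^{-1}$ at $t = 0$ using the explicit product and inverse formulas \eqref{eq:prod_and_inv}.

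The computation splits into two components. For the $\Diff_c(M)$-component: using \eqref{eq:prod_and_inv} twice, the first coordinate of $(\phi, f)(\theta_t, h_t)(\phi, f)^{-1}$ is $\phi \circ \theta_t \circ \phi^{-1}$, whose $t$-derivative at $0$ is $T\phi \circ X \circ \phi^{-1} = \Ad_\phi(X)$ by the chain rule — this recovers the known adjoint action of the diffeomorphism group and the $f$-component drops out of the first slot, as expected for a semidirect product. For the $C_c^\infty(M,\R)$-component: carefully tracking \eqref{eq:prod_and_inv}, one gets $(\phi,f)(\theta_t,h_t) = (\phi\circ\theta_t,\ h_t\circ\phi^{-1} + f)$ and then multiplying on the right by $(\phi,f)^{-1} = (\phi^{-1}, -f\circ\phi)$ gives second coordinate $(-f\circ\phi)\circ(\phi\circ\theta_t)^{-1} + h_t\circ\phi^{-1} + f = -f\circ\theta_t^{-1} + h_t\circ\phi^{-1} + f$. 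Differentiating at $t=0$: the $h_t\circ\phi^{-1}$ term contributes $g\circ\phi^{-1} = \gamma(\phi, g)$; the term $f - f\circ\theta_t^{-1}$ contributes $-\frac{d}{dt}\big|_{0}(f\circ\theta_t^{-1}) = +X(f)$ (since $\frac{d}{dt}\big|_0 \theta_t^{-1} = -X$, giving $Xf$ with a sign flip against the $\XcM$-bracket convention). Expressing $Xf = \mathrm d\gamma(X, f)$ appropriately and rewriting in terms of $\Ad_\phi(X)$ via the identity $X(f\circ\phi)\circ\phi^{-1} = (\Ad_\phi X)(f)$ (chain rule again) yields the stated second form; matching it to the intrinsic semidirect-sum formula $\gamma(\phi,g) - \mathrm d\gamma(\Ad_\phi(X), f)$ is then a matter of unwinding the definition of $\mathrm d\gamma$.

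The only genuine subtlety — the main obstacle — is bookkeeping the sign conventions, since $\XcM$ carries the \emph{negative} of the usual vector field bracket and the curve $\theta_t^{-1}$ introduces another sign; getting the second component to come out as $-\mathrm d\gamma(\Ad_\phi(X), f)$ rather than $+\mathrm d\gamma$ requires care. Everything else is routine application of the chain rule and the explicit formulas \eqref{eq:prod_and_inv}, so I would present the two-component computation compactly and spend a sentence reconciling the two displayed expressions via the chain-rule identity $(\Ad_\phi X)(f) = X(f\circ\phi)\circ\phi^{-1}$, which follows since $\Ad_\phi X = T\phi\circ X\circ\phi^{-1}$ acts on $f$ by $v_x \mapsto df_x(T\phi\, X_{\phi^{-1}(x)})= d(f\circ\phi)_{\phi^{-1}(x)}(X_{\phi^{-1}(x)})$.
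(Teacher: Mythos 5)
Your overall strategy—differentiate $t \mapsto (\phi,f)(\theta_t,h_t)(\phi,f)^{-1}$ at $t=0$ using the explicit formulas \eqref{eq:prod_and_inv}, with a curve such as $(\Fl^X_t, tg)$—is exactly the paper's approach, and the first component and the $g\circ\phi^{-1}$ term are handled correctly. However, there is a concrete error in your computation of the second component. From \eqref{eq:prod_and_inv} the second coordinate of the conjugate is $(-f\circ\phi)\circ(\phi\circ\theta_t)^{-1} + h_t\circ\phi^{-1} + f$, and since $(\phi\circ\theta_t)^{-1} = \theta_t^{-1}\circ\phi^{-1}$ this equals $-f\circ\phi\circ\theta_t^{-1}\circ\phi^{-1} + h_t\circ\phi^{-1} + f$, i.e.\ $f - f\circ c_\phi(\theta_t^{-1}) + h_t\circ\phi^{-1}$. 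You instead simplified it to $-f\circ\theta_t^{-1} + h_t\circ\phi^{-1} + f$, silently cancelling the conjugation by $\phi$; but $\phi\circ\theta_t^{-1}\circ\phi^{-1} \neq \theta_t^{-1}$ in general. This is not a harmless slip: differentiating your (incorrect) expression gives $g\circ\phi^{-1} + Xf$, whereas differentiating the correct one gives $g\circ\phi^{-1} + (\Ad_\phi X)f$, because $\tfrac{d}{dt}\big|_{t=0}\, \phi\circ\theta_t^{-1}\circ\phi^{-1} = -T\phi\circ X\circ\phi^{-1} = -\Ad_\phi(X)$. The conjugation you dropped is precisely the mechanism by which $\Ad_\phi$ enters the $C_c^\infty(M,\R)$-component.

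The closing step of your argument cannot repair this: the identity $X(f\circ\phi)\circ\phi^{-1} = (\Ad_\phi X)(f)$ relates the two forms of the \emph{correct} answer to each other, but neither of them equals $Xf$, so no amount of ``rewriting in terms of $\Ad_\phi(X)$'' turns your intermediate result $g\circ\phi^{-1} + Xf$ into the claimed $g\circ\phi^{-1} - \mathrm{d}\gamma(\Ad_\phi(X), f)$. The fix is simply to keep the full composition $f\circ c_\phi(\theta_t^{-1})$ and differentiate it via the chain rule, which is what the paper does (it records $c_{(\phi,f)}(\theta,g) = (c_\phi(\theta),\, g\circ\phi^{-1} + f - f\circ c_\phi(\theta^{-1}))$ and then uses that $c_\phi\bigl((\Fl^X_t)^{-1}\bigr)$ is the flow of $-\Ad_\phi(X)$, giving $df\circ\Ad_\phi(X)$ for the derivative term). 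With that correction your write-up coincides with the paper's proof; also note that the sign discussion about the bracket convention on $\XcM$ is a red herring here, since the computation is pure chain rule and never invokes the Lie bracket.
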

\begin{proof}
    Write $c_g (h)=ghg^{-1}$ for the conjugation action of a group on itself. Then, for all $(\phi, f), (\theta, g) \in \GM$, \eqref{eq:prod_and_inv} yields
    \begin{align*}
        c_{(\phi, f)} (\theta, g)  = \bigl(c_\phi (\theta) , g \circ \phi^{-1} + f - f \circ c_\phi (\theta^{-1}) \bigr).
    \end{align*}
    Let now $X \in \XcM$. 
    Using the smooth curve $[0, 1]\ni t \mapsto (\Fl^X_t, tg) \in \GM$ to compute the derivative of the conjugation, we obtain 
    \begin{align*}
    \Ad_{(\phi,f)}(X,g) &= \left(\Ad_\phi (X) , \frac{d}{dt}\Big|_{t=0} (tg)\circ \phi^{-1} + f - f \circ c_\phi \bigl( (\Fl^X_t)^{-1} \bigr)\right)\\
    &= \left(\Ad_\phi (X), g \circ \phi^{-1} - \frac{d}{dt}\Big|_{t=0} f \circ c_\phi \bigl( (\Fl^X_t)^{-1} \bigr)\right).
    \end{align*}
    The tangent map of $L_f (\theta):=f\circ \theta$ is $L_{Tf}(\xi)=Tf \circ \xi $ (as a partial map of the composition \cite[Corollary 11.6]{Mic80}). We exploit this and use that $c_\phi$ is conjugation in the Lie group $\Diff_c(M)$ and the flow $\Fl_t^X$ is the exponential map of this Lie group. Hence, the usual formulae (e.g.\ \cite[Chapter 38]{KM97}) for the Lie group $\Diff_c(M)$ now imply that $\Ad_{(\phi,f)}(X,g) = (\Ad_\phi (X),g\circ \phi^{-1} + df \circ \Ad_{\phi} (X))$ (where $df$ denotes the $1$-form obtained by derivating $f$), whence the easily verified identity $df \circ \Ad_\phi(X) = X(f \circ \phi) \circ \phi^{-1}$ together with \eqref{GM:derived_action} yields the result.
\end{proof}

Consider now the continuous dual of $\gM$, the vector space$$\gM^\ast = \X_c(M)^\ast \times \CcMR^\ast.$$ We will denote by $\langle \cdot, \cdot \rangle\colon \gM^* \times \gM \to \R$ the natural evaluation pairing. There is a whole range of natural locally convex topologies on $\gM^*$. The weakest of these is the \textit{weak$^*$-topology}, defined by pointwise convergence, and the strongest is the \textit{strong topology}, defined by uniform convergence on bounded sets. As the resulting locally convex spaces $\gM^*$ are non-normable, differentiability of a map into $\gM^*$ is defined in the sense of Bastiani calculus, i.e.\ all directional derivatives exist and combine to form continuous maps, see \cite{Sch23}.
Recall the moment map
\begin{align}
    \label{eq:coadjoint_orbit_inclusion}
    \cO \colon  T^*M \to \gM^* , \quad
    \eta_x &\mapsto (\eta_x, \delta_x)
\end{align}
from \Cref{prop:moment_map}, where $\eta_x \in T_x^*M$ and
\begin{align*}
    \big\langle (\eta_x, \delta_x), (X, f) \bigr\rangle = \eta_x(X_x) + f(x) \quad \text{for all } (X, f) \in \gM.
\end{align*}
We now show that this is the inclusion of a coadjoint orbit.
\begin{theorem}
    \label{thm:coadjoint_orbit}
    Equip $\gM^*$ with a locally convex topology between the weak$^*$- and the strong topology. Then, the moment map $\cO \colon T^\ast M \rightarrow \gM^\ast$ \eqref{eq:coadjoint_orbit_inclusion} is a smooth embedding\footnote{Recall that a smooth embedding is a smooth immersion which is also a topological embedding.} and a symplectomorphism onto a coadjoint orbit of $\GM$. Moreover, the action $\alpha^0$ \eqref{eq:alpha_0} of $\GM$ on $T^*M$ corresponds to the coadjoint action of $\GM$.
\end{theorem}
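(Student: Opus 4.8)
The plan is to assemble the theorem from four ingredients: equivariance of $\cO$, transitivity of $\alpha^0$, the statement that $\cO$ is a smooth embedding for \emph{every} admissible topology, and a comoment-map comparison of the symplectic forms. For equivariance I would compute the coadjoint action $\Ad^\ast_{(\phi,f)} \coloneqq (\Ad_{(\phi,f)^{-1}})^\ast$ directly from \Cref{lem:adjoint_action}: since $(\phi,f)^{-1} = (\phi^{-1},-f\circ\phi)$, that lemma gives $\Ad_{(\phi,f)^{-1}}(X,g) = (\Ad_{\phi^{-1}}X,\, g\circ\phi - (Xf)\circ\phi)$, and pairing with $(\eta_x,\delta_x)$ and simplifying yields $\langle \Ad^\ast_{(\phi,f)}(\eta_x,\delta_x),(X,g)\rangle = ((\phi^{-1})^\ast\eta_x - df_{\phi(x)})(X_{\phi(x)}) + g(\phi(x))$. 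By \eqref{eq:alpha_0} this is exactly $\langle \cO(\alpha^0_{(\phi,f)}(\eta_x)),(X,g)\rangle$, so $\cO\circ\alpha^0_{(\phi,f)} = \Ad^\ast_{(\phi,f)}\circ\cO$. This proves the last assertion of the theorem and shows $\cO(T^\ast M)$ is a union of coadjoint orbits.

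To see it is a single orbit, I would first note $\cO$ is injective: $\CcMR$ separates points of $M$, so the second component of $\cO(\eta_x)$ determines $x$, and then the first component determines $\eta_x\in T_x^\ast M$ (since $T_xM$ is spanned by values of vector fields). Next, $\alpha^0$ is transitive: $\DiffcM$ acts transitively on the connected manifold $M$, so $\alpha^0_{(\phi,0)}=(\phi^{-1})^\ast$ carries $\eta_x$ into any cotangent fiber $T_y^\ast M$; and since $\alpha^0_{(\id,f)}(\eta_y) = \eta_y - df_y$, taking $f = \chi\ell$ with $\chi$ a bump function and $\ell$ affine in a chart realizes any prescribed translation inside $T_y^\ast M$. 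Transitivity of $\alpha^0$ on $T^\ast M$ together with injectivity of $\cO$ shows $\mathcal O \coloneqq \cO(T^\ast M)$ is a single coadjoint orbit.

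For the ``smooth embedding'' part I would handle the topology once and for all by proving $\cO$ is Bastiani-smooth into $(\gM^\ast,\text{strong})$; since every admissible topology is coarser, postcomposing with the continuous linear identity gives smoothness into each of them. Strong-smoothness is a coordinate check: the partial derivatives of $\eta_x\mapsto\delta_x$ and of $\eta_x\mapsto(X\mapsto\eta_x(X_x))$ are the expected families of compactly supported distributions, and the difference-quotient remainders are uniform over strongly bounded subsets of $\gM$ because such subsets have a common compact support and uniformly bounded derivatives of all orders (cf.\ \cite{Sch23}). The immersion and topological-embedding properties I would then argue locally, in a way that sees only weak$^\ast$-continuity of evaluations. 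Over cotangent coordinates $(q,p)$ on some $T^\ast U'$, pick $\chi$ with $\chi\equiv1$ on $U'$ and set $f_i\coloneqq\chi q^i\in\CcMR$, $X_j\coloneqq\chi\,\partial/\partial q^j\in\XcM$; then $H_{(0,f_i)}=q^i$ and $H_{(X_j,0)}=p_j$ on $T^\ast U'$, so the weak$^\ast$-continuous linear map $\Phi\coloneqq(\ev_{(0,f_1)},\dots,\ev_{(X_d,0)})\colon\gM^\ast\to\R^{2d}$ satisfies $\Phi\circ\cO=(q,p)$ there. This forces each $T_p\cO$ to be injective, hence a topological embedding onto a finite-dimensional (so complemented) subspace, i.e.\ $\cO$ is an immersion; and it gives continuity of $\cO^{-1}$: fixing $x\in U'$, if a net $\cO(\eta_{y_\alpha})\to\cO(\eta_x)$ and $g$ is a bump supported in $U'$ with $g(x)=1$, then $g(y_\alpha)=H_{(0,g)}(\eta_{y_\alpha})\to1$, so eventually $\eta_{y_\alpha}\in T^\ast U'$, where $(q,p)(\eta_{y_\alpha})=\Phi(\cO(\eta_{y_\alpha}))\to(q,p)(\eta_x)$, hence $\eta_{y_\alpha}\to\eta_x$. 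Since only weak$^\ast$-continuous functionals were used, $\cO$ is a topological embedding for every admissible topology, so $\cO$ is a smooth embedding onto $\mathcal O$.

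Finally, the orbit $\mathcal O$ carries its Kirillov--Kostant--Souriau form $\omega_{\mathcal O}$, characterized by the coadjoint action being Hamiltonian with comoment map $\xi\mapsto\ev_\xi|_{\mathcal O}$; since $\mathcal O$ is finite-dimensional (being $\cO$-diffeomorphic to $T^\ast M$), $\omega_{\mathcal O}$ is genuinely nondegenerate. Because $\ev_\xi\circ\cO=H_\xi$ and $\cO$ is equivariant and a diffeomorphism onto $\mathcal O$, the pullback $\cO^\ast\omega_{\mathcal O}$ is a symplectic form on $T^\ast M$ for which $\alpha^0$ is Hamiltonian with comoment map $H$. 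But $\omega_{\mathrm{can}}$ is the unique such form: the fundamental vector fields of $\alpha^0$ span every tangent space of $T^\ast M$ (by \Cref{prop:moment_map} they are the Hamiltonian vector fields of the $H_{(X,f)}=\sum_i X^i p_i+f$, which in coordinates have $\partial/\partial q^i$-component $X^i$ and $\partial/\partial p_i$-component $-\partial_{q^i}(\sum_j X^j p_j)-\partial_{q^i}f$, with $X^i(x)$ and $\partial_{q^i}f(x)$ independently prescribable), so any two symplectic forms satisfying $\iota_{\xi_{T^\ast M}}\omega=dH_\xi$ for all $\xi$ coincide. Hence $\cO^\ast\omega_{\mathcal O}=\omega_{\mathrm{can}}$ and $\cO$ is a symplectomorphism. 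I expect the only genuinely nontrivial step to be the verification that $\cO$ is smooth into the strong topology --- the remaining admissible topologies and all the other claims then follow essentially formally, with a minor point being that nondegeneracy of $\omega_{\mathcal O}$ and the final comparison are unproblematic precisely because $\mathcal O$ is finite-dimensional.
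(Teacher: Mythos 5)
Your proposal is correct, and its skeleton matches the paper's: equivariance straight from \Cref{lem:adjoint_action}, transitivity from transitivity of $\DiffcM$ on $M$ plus fiberwise translations by $-df$, smoothness handled in the strong topology (so that all coarser admissible topologies come for free), openness onto the image tested only against weak$^*$-continuous functionals, and the symplectic comparison reduced to the comoment property of $H$. Where you genuinely deviate is in two places, both to good effect. First, for the immersion you exhibit a weak$^*$-continuous linear map $\Phi$ with $\Phi \circ \cO = (q,p)$ locally, i.e.\ a continuous linear left inverse in a chart; the paper instead computes $d\cO$ explicitly (Steps 3--4 of \Cref{app:topologies}) and invokes \cite[Theorem H]{glo15} using finite-dimensionality of $T^*M$. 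Your trick gives the injectivity of the differential and the continuity of $\cO^{-1}$ in one stroke and is arguably cleaner, though note it does not spare you the derivative estimates: those are still needed for Bastiani smoothness into the strong dual, which is exactly the content of the paper's \Cref{lem:Arz_Asc} together with the uniform-over-bounded-sets difference-quotient estimates in \Cref{app:topologies}. You only sketch this step, but you correctly identify both that it is the real work and the mechanism (bounded subsets of $\gM$ lie in a step of the inductive limit, hence have common compact support and uniformly controlled derivatives). Second, for the symplectomorphism the paper verifies Kirillov's formula \eqref{eq:symp_form_in_proof_of_diffeo} directly on pairs of fundamental vector fields (via the coordinate computation of $d\alpha^0$ in \Cref{app:moment_map}), whereas you argue by uniqueness: both $\cO^*\omega_{\mathcal O}$ and the canonical form make $\alpha^0$ Hamiltonian with comoment $H$ (the latter by \Cref{prop:moment_map}), and since the fundamental vector fields span every tangent space of $T^*M$, any two such forms agree. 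This is a legitimate repackaging of the same coordinate input, slightly more abstract but no less rigorous, with the finite-dimensionality of the orbit disposing of any nondegeneracy worries, as you note.
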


\begin{proof}
    The map \eqref{eq:coadjoint_orbit_inclusion} is clearly injective. Using \Cref{lem:adjoint_action}, we find that
    \begin{align*}
        &\blangle \Ad^*_{(\phi, f)} (\eta_x, \delta_x), (X, g) \brangle  =\blangle (\eta_x, \delta_x), \Ad_{(\phi^{-1}, -f \circ \phi)} (X, g) \brangle \\
         &= \blangle (\eta_x, \delta_x), \bigl(\Ad_{\phi^{-1}}( X), g \circ \phi - (Xf) \circ \phi \bigr) \brangle
         \\ &=  \eta_x \bigl( \Ad_{\phi^{-1}}(X)_x \bigr) + g \circ \phi(x) - X_{\phi(x)}f  \\ &= \bigl( (\phi^{-1})^* \eta_x - df_{\phi(x)} \bigr) (X_{\phi(x)}) + \delta_{\phi(x)}g
         = \blangle \bigl( (\phi^{-1})^* \eta_x - df_{\phi(x)}, \delta_{\phi(x)} \bigr), (X, g) \brangle,
    \end{align*}
    and hence $\Ad^*_{(\phi, f)}(\eta_x, \delta_x) = ( (\phi^{-1})^* \eta_x - df_{\phi(x)}, \delta_{\phi(x)} )$, which shows that $\alpha^0$ corresponds to the coadjoint action. Since $M$ is connected, $\DiffcM$ acts transitively on $M$ \cite{MV}. Thus, for fixed $y \in M$ and $\beta_y \in T_y^*M$, the coadjoint orbit $\mathcal O_{(\beta_y, \delta_y)}$ coincides with the image of \eqref{eq:coadjoint_orbit_inclusion}.

    The fact that $\cO$ is a smooth embedding is proved in \Cref{app:mfd_smooth_maps} (\Cref{app:topologies}). According to \cite[Chapter~1]{Kirillov}, the symplectic form $\omega$ on the coadjoint orbit $\mathcal O_{(\beta_y, \delta_y)}$ is determined pointwise by
    \begin{align*}
        &\omega_{(\eta_x, \delta_x)} \bigl( \ad^*_{(X, f)} (\eta_x, \delta_x), \ad^*_{(Y, g)} (\eta_x, \delta_x) \bigr) 
        \coloneqq \blangle (\eta_x, \delta_x), [ (X, f), (Y, g)] \brangle \\ 
        &= \blangle (\eta_x, \delta_x), (-[X, Y], -Xg + Yf) \brangle = - \eta_x \bigl( [X, Y]_x \bigr) - (Xg - Yf)(x)
    \end{align*}
    (see \eqref{eq:Liebracket} for the second equality). Since $\mu$ intertwines the coadjoint action of $\GM$ on $\mathcal O_{(\beta_y, \delta_y)}$ with its action $\alpha^0$ \eqref{eq:alpha_0} on $T^*M$, the vector field $\ad^*_{(X, f)}$ on $\mathcal O_{(\beta_y, \delta_y)}$ is the pushforward by $\mu$ of the vector field $d\alpha^0_{(X, f)}$ on $T^*M$, where $d\alpha^0 \colon \gM \to \X(T^*M)$ is the derived action. To show that $\mu$ is a symplectomorphism, it therefore suffices to show that the symplectic form $\omega'$ on $T^*M$ satisfies
    \begin{align}
        \label{eq:symp_form_in_proof_of_diffeo}
        \omega'_{\eta_x} \bigl( d\alpha^0_{(X, f)}(\eta_x), d\alpha^0_{(Y, g)}(\eta_x) \bigr) = - \eta_x \bigl( [X, Y]_x \bigr) - (Xg - Yf)(x).
    \end{align}
    We prove this in Appendix \ref{app:moment_map}.
\end{proof}

\subsection{Geometric Quantization via Kirillov's Method}
\label{sec:geometric_quantization_result}

We now construct the representations $\rho^h \colon \GM \to \mathcal U(L^2(M))$ \eqref{eq:def_of_rho} via Kirillov's method of algebraic polarization and induction. We consider $h = 1$ for now; \Cref{rem:hbar_quant} explains the case of $h \neq 1$. Some facts regarding the structure of diffeomorphism groups are needed which, to the best of our knowledge, appear only for compact $M$ in the literature, e.g.\ in \cite{Sch23} and the sources therein. 

\begin{setup}
Fix $x \in M$ and consider the groups
\begin{align*}
    D_x:=\{\phi \in \Diff_c(M) : \phi(x)=x\} \quad \text{and} \quad H_x := \{(\phi,f) \in \GM : \phi \in D_x\}.
\end{align*}
Note that $H_x \subseteq \GM$ is a subgroup such that $H_x = D_x \ltimes_\gamma C_c^\infty (M,\R)$. 
As $\Diff_c(M) \subseteq C^\infty (M,M)$ is an open subset, \cite[Corollary 5.7]{Mic80}, the smooth submersion $\ev_x$ (cf.\ \Cref{ev_smooth_subm}) restricts to a submersion $\ev_x \colon \Diff_c(M) \rightarrow M$ such that $D_x = \ev_x^{-1}(x)$. The regular value theorem \cite[Theorem E]{glo15} implies that $D_x$ is a split submanifold of finite codimension. The embedding $\iota \colon D_x \rightarrow \Diff_c(M)$ then induces an embedding $H_x = D_x \ltimes_\gamma C_c^\infty (M,\R) \rightarrow \GM$, showing that $H_x$ is also a split submanifold of finite codimension. We conclude that $D_x$ and $H_x$ are Lie subgroups of $\Diff_c(M)$ and $\GM$, respectively. 
\end{setup}

The next statement collects essential Lie theoretic facts for $D_x$, $H_x$ and $\GM$.

\begin{prop}\label{Prop:LT_of_GM}
Fix $x \in M$. Then, the following holds.
\begin{enumerate}[label=\rm (\alph*)]
    \item The Lie algebras of $D_x$ and $H_x$ are $\mathfrak{d}_x := \Lf(D_x)= \{X \in \XcM : X_x =0\}$ and $\mathfrak{h}_x := \Lf(H_x)=\{(X,f)\in \gM : X_x=0\}$.
    \item The Lie groups $D_x,H_x$ and $\GM$ are $C^k$-regular\footnote{Recall that a Lie group $G$ is called $C^k$-regular for $k \in \N_0$ if for every $C^k$-map $\xi \colon [0,1]\rightarrow \Lf(G)$ the initial value problem in the Lie group $G$,\[\begin{cases} \dot{\eta}(t) = T\lambda_{\eta(t)}(\xi(t)) \\ \eta(0) = \mathbf{1}_G\end{cases}\] admits a solution (where $\lambda_g(h)=gh$ is left translation) and the evolution map $\evol \colon C^k([0,1],\Lf(G)) \rightarrow G, \xi \mapsto \eta(1)$ is smooth.} for every $k \in \N_0$.
    \item There is a manifold structure (on the isomorphic targets) turning the canonical quotient maps $p \colon \Diff_c (M) \rightarrow \Diff_c (M)/D_x$ and $q \colon \GM \rightarrow \GM /H_x$ into submersions. Therefore, this structure is necessarily unique.
    \item The evaluation $\ev_x \colon \Diff_c(M) \to M, \ev_x(\phi)= \phi(x)$ descends to a diffeomorphism $\Diff_c(M) /D_x \cong \GM/H_x \cong M$.
\end{enumerate}
\end{prop}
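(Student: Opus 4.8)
I will take the four items in the order in which they are stated; the setup preceding the proposition already does most of the work, so the plan is mainly to assemble it.

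For part (a) I would simply differentiate the submersion $\ev_x\colon\DiffcM\to M$ at $\id$. Using the curve $t\mapsto\Fl^X_t$ exactly as in the proof of \Cref{lem:adjoint_action}, one gets $T_{\id}\ev_x(X)=\frac{d}{dt}\big|_{t=0}\Fl^X_t(x)=X_x$, so the regular value theorem identifies $\mathfrak d_x=\Lf(D_x)=\ker T_{\id}\ev_x=\{X\in\XcM:X_x=0\}$. Since the embedding $H_x=D_x\ltimes_\gamma\CcMR\hookrightarrow\GM$ is the one induced by $\iota\colon D_x\hookrightarrow\DiffcM$, passing to Lie algebras yields $\mathfrak h_x=\mathfrak d_x\ltimes_{\mathrm d\gamma}\CcMR=\{(X,f)\in\gM:X_x=0\}$.

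For part (b), the regularity of $\GM$ should follow by combining three facts: $\DiffcM$ is $C^0$-regular, hence $C^k$-regular for every $k$ \cite{KM97}; $\CcMR$, being a complete locally convex space viewed as an abelian Lie group, is $C^0$-regular; and an extension of a $C^k$-regular Lie group by a $C^k$-regular one is again $C^k$-regular (see \cite{Sch23} and the references therein). The same extension statement reduces the regularity of $H_x=D_x\ltimes_\gamma\CcMR$ to that of $D_x$. For $D_x$ itself, the key observation is that the Lie-theoretic initial value problem defining $\evol^{\DiffcM}$ on a curve $\xi\in C^k([0,1],\mathfrak d_x)$ is precisely the evolution equation of a time-dependent vector field on $M$ that vanishes at $x$ for every $t$; its solution therefore fixes $x$ and hence stays in $D_x$. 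Thus $\evol^{\DiffcM}$ corestricts to a map $C^k([0,1],\mathfrak d_x)\to D_x$, which is smooth because $D_x$ is a split — in particular initial — submanifold of $\DiffcM$; so $D_x$, and with it $H_x$, is $C^k$-regular for every $k$.

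For parts (c) and (d): since $D_x\le\DiffcM$ and $H_x\le\GM$ are split Lie subgroups, the quotient manifold theorem for locally convex Lie groups (\cite{Neeb06}; cf.\ \cite{Sch23}) provides manifold structures on $\DiffcM/D_x$ and $\GM/H_x$ turning $p$ and $q$ into submersions. Uniqueness is automatic: a surjective submersion admits local smooth sections, so its target carries the final smooth structure, and two such structures on the same set are then identified by the identity map. For (d) I would note that $\ev_x(\psi)=\ev_x(\phi)$ iff $\psi^{-1}\phi\in D_x$, so the fibres of $\ev_x$ are exactly the left $D_x$-cosets, and $\ev_x$ is surjective since $\DiffcM$ acts transitively on the connected manifold $M$ \cite{MV}. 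Hence $\ev_x$ descends to a bijection $\overline{\ev}_x\colon\DiffcM/D_x\to M$; it is smooth because $\overline{\ev}_x\circ p=\ev_x$ and $p$ is a submersion, and its inverse is smooth because $\overline{\ev}_x^{-1}\circ\ev_x=p$ and $\ev_x$ is a submersion, so $\overline{\ev}_x$ is a diffeomorphism. Finally, the semidirect-product projection $\GM\to\DiffcM$ sends $H_x$ onto $D_x$ with kernel $\CcMR\subseteq H_x$, hence induces a diffeomorphism $\GM/H_x\xrightarrow{\ \sim\ }\DiffcM/D_x$; composing with $\overline{\ev}_x$ realizes the descent of $(\phi,f)\mapsto\phi(x)$ as a diffeomorphism $\GM/H_x\cong M$.

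I expect the genuine content to lie entirely in (b), and within it in the regularity of the stabilizers $D_x$ and $H_x$: one must verify carefully that the group-valued evolution equation with velocities in $\mathfrak d_x$ is genuinely solved inside $D_x$, that corestriction to a split submanifold preserves smoothness of $\evol$, and that a usable version of the extension theorem is available in the $C^k$-regular (not merely smoothly regular) setting. Parts (a), (c) and (d) are essentially bookkeeping on top of the preceding setup.
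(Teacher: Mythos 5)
Your parts (a), (b) and (d) run along essentially the same lines as the paper: (a) is the kernel of $T_{\id}\ev_x$ computed via flows, (b) uses the stability of $D_x$ under evolutions of curves in $\mathfrak d_x$ (flows of vector fields vanishing at $x$ fix $x$) together with the extension property of $C^k$-regularity and regularity of the complete abelian group $\CcMR$, and (d) is the same submersion/diagram argument using transitivity of $\DiffcM$ on $M$. These parts are fine.

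The genuine weak point is (c). You justify the manifold structures on $\DiffcM/D_x$ and $\GM/H_x$ by appealing to ``the quotient manifold theorem for locally convex Lie groups'' (\cite{Neeb06}) with only the hypothesis that $D_x$ and $H_x$ are split Lie subgroups. No such general theorem is available in this setting: $\DiffcM$ and $\GM$ are not Banach Lie groups, so there is no exponential-chart construction of local sections, and in Neeb's terminology a ``split Lie subgroup'' is one for which the quotient is \emph{by definition} a manifold and $G\to G/H$ a principal bundle—so invoking it here is either circular or unsupported, and ``split submanifold'' in the sense of complemented tangent spaces (which is what the regular value theorem gives for $D_x$, $H_x$) does not suffice by itself. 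The paper instead applies Gl\"ockner's result \cite[Theorem G (a)]{glo15}, whose hypotheses are that the subgroup is a $C^k$-regular split Lie subgroup of \emph{finite codimension}: the finite codimension comes from the regular value theorem in the setup, and the $C^k$-regularity is exactly what part (b) supplies. So your closing assessment that the content lies entirely in (b) and that (c) is bookkeeping misses the logical dependence: (b) is an input to (c). The fix is small—since you have proved (b) and the finite codimension is already established, replace the appeal to \cite{Neeb06} by the citation of \cite[Theorem G (a)]{glo15} with those hypotheses—but as written the step does not go through. Your uniqueness argument via local sections of submersions is fine.
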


\noindent The reason the statement of \Cref{Prop:LT_of_GM} is rather involved and that there is indeed something non-trivial to prove is rooted in the fact that $\GM$ and $\Diff_c(M)$ are not Banach Lie groups, but modeled on more general locally convex spaces.

\begin{proof}[Proof of \Cref{Prop:LT_of_GM}]
(a) \vspace{.4ex} The statement for $H_x$ follows immediately once the formula for the Lie algebra of $D_x$ is established. Since $D_x = \ev_x^{-1}(x)$ and $\ev_x$ is a submersion by \Cref{ev_smooth_subm}, a submersion argument, \cite{glo15}, yields $\Lf(D_x)=T_{\id_M}D_x = \ker  T_{\id_M} \ev_x$. The tangent map of $\ev_x$ is computed as in the proof of \Cref{ev_smooth_subm} and is again an evaluation, which yields the identification.

\medskip
\noindent (b) \vspace{.4ex} Recall first that $\Diff_c (M)$ is $C^k$-regular for every $k\in \N_0$ (see e.g.\ \cite[p.5]{glo12}, \cite[Theorem 43.1]{KM97}) and the evolution map $\evol_{\Diff_c(M)}$ is given by the time-$1$ flow map $\Fl^X_1$ of a time dependent (compactly supported) vector field $X$. Now, as $D_x$ is a Lie subgroup, it will be $C^k$-regular if $\evol_{\Diff_c(M)} (\xi) \in D_x$ for every $C^k$-curve $\xi$ taking values in $\mathfrak{d}_x$. This is obvious by (a), as the flow of a vector field vanishing at a point fixes that point. Now, 
\begin{align}\label{eq:extension}D_x \rightarrow D_x \ltimes C^\infty_c (M,\R) \rightarrow C^\infty_c(M,\R)
\end{align}
is an extension of the $C^k$-regular Lie group $D_x$ with the abelian Lie group $C_c^\infty(M,\R)$. The latter is complete, so by \cite[Theorem C (c)]{glo12} it is also $C^k$-regular for every $k \in \N_0$. It follows that $H_x$ is $C^k$-regular, since $C^k$-regularity of Lie groups is an extension property \cite[Theorem 38.6]{KM97}. The same argument works for $\GM$ by replacing $D_x$ in \eqref{eq:extension} with $\Diff_c(M)$.

\medskip
\noindent (c) and (d) \vspace{.4ex} There is an obvious identification of the quotient sets $\Diff_c(M)/D_x$ and $\GM/H_x$. With the notation from the statement of the Proposition, we obtain
\[
\begin{tikzcd}
    \Diff_c (M) \ar[r, hookrightarrow] \ar[d,"p"]& \GM \ar[d,"q"] \ar[r,two heads] & \Diff_c(M) \ar[d,"p"] \ar[dr, shift left=1ex, "\ev_x"] \\ \Diff_c(M)/D_x \ar[r] &  \GM/H_x \ar[r] & \Diff_c(M)/D_x \ar[r, "\varphi"]& M
\end{tikzcd}
\]
where the upper row is first inclusion, then projection, and the lower row is given by the canonical identification maps and then $\varphi(\psi D_x)=\psi(x)$. We know that $D_x \subseteq \Diff_c(M)$ and $H_x \subseteq \GM$ are $C^k$-regular split Lie subgroups ($k\in \N_0$) of finite codimension. The quotients therefore have a necessarily unique manifold structure turning the canonical quotient maps $p,q$ into submersions by \cite[Theorem G (a)]{glo15}. As $M$ is connected, $\Diff_c(M)$ acts transitively on $M$ (see e.g.\ \cite{MV}) and thus $\ev_x$ is surjective. Hence,  the induced map $\varphi$ is a bijection. So all maps on the bottom row are bijective. Also $\ev_x$ is a submersion by \Cref{ev_smooth_subm}, hence smoothness of all maps and their inverses on the bottom row follows from the smoothness of the top row by a submersion argument.  \qedhere
\end{proof}

Fix $\eta_x \in T^*M \subset \gM^*$. Recall that a \textit{real algebraic polarization} of $\eta_x$ is a Lie subalgebra $\mathfrak h \subset \gM$ for which $\eta_x|_{[\mathfrak h, \mathfrak h]} = 0$ and $\mathrm{codim}_{\gM} \mathfrak h = d$, and that the polarization is called \textit{admissible} if it is invariant under the stabilizer of $\eta_x$ (with respect to the coadjoint action). 

\begin{prop}
    Fix $\eta_x \in T^*M \subset \gM^*$. Then, the Lie subalgebra
    \begin{align*}
        \mathfrak h_x = \bigr\{ (X, f) \in \gM : X_x = 0 \bigl\}
    \end{align*}
    is an admissible real algebraic polarization of $\eta_x$.
\end{prop}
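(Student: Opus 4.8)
The plan is to verify, one by one, the three defining properties of a real algebraic polarization of $\eta_x$ and then admissibility, throughout identifying $\eta_x \in T^*M$ with $\mu(\eta_x) = (\eta_x, \delta_x) \in \gM^*$ as in \eqref{eq:coadjoint_orbit_inclusion}. That $\mathfrak h_x$ is a Lie subalgebra of $\gM$ is \Cref{Prop:LT_of_GM}(a), where it appears as $\Lf(H_x)$; it also follows directly from \eqref{eq:Liebracket}, since if $X_x = Y_x = 0$ then, in a chart around $x$, the bracket $[X,Y]$ has coordinates $X^j \partial_j Y^i - Y^j \partial_j X^i$, which vanish at $x$, so $[(X,f),(Y,g)] = (-[X,Y],\, -Xg + Yf)$ again has first component vanishing at $x$. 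For the codimension, observe that $\gM = \XcM \ltimes \CcMR$ and $\mathfrak h_x = \mathfrak d_x \ltimes \CcMR$, and that the evaluation $\XcM \to T_xM$, $X \mapsto X_x$, is a continuous surjective linear map with kernel $\mathfrak d_x$; hence $\mathrm{codim}_{\gM} \mathfrak h_x = \mathrm{codim}_{\XcM} \mathfrak d_x = \dim T_xM = d$ (this is also implicit in the Setup preceding \Cref{Prop:LT_of_GM}).

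Next I would check the isotropy condition $\eta_x|_{[\mathfrak h_x, \mathfrak h_x]} = 0$. For $(X,f), (Y,g) \in \mathfrak h_x$ the computation above gives $[X,Y]_x = 0$, while $(Xg)(x) = X_x(g) = 0$ and $(Yf)(x) = Y_x(f) = 0$. Therefore
\begin{align*}
    \bigl\langle (\eta_x, \delta_x),\, [(X,f),(Y,g)] \bigr\rangle = \bigl\langle (\eta_x,\delta_x),\, (-[X,Y],\, Yf - Xg) \bigr\rangle = -\eta_x\bigl([X,Y]_x\bigr) + (Yf - Xg)(x) = 0,
\end{align*}
and since $[\mathfrak h_x, \mathfrak h_x]$ is spanned by such brackets, linearity of $\mu(\eta_x)$ yields the claim.

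For admissibility, I would first locate the stabilizer of $\mu(\eta_x)$. By the proof of \Cref{thm:coadjoint_orbit}, $\Ad^*_{(\phi,f)}(\eta_x, \delta_x) = \bigl((\phi^{-1})^* \eta_x - df_{\phi(x)},\, \delta_{\phi(x)}\bigr)$, so any stabilizing element $(\phi,f)$ must satisfy $\delta_{\phi(x)} = \delta_x$, i.e.\ $\phi(x) = x$; thus the stabilizer is contained in $H_x$. It therefore suffices to prove $\Ad_{(\phi,f)}(\mathfrak h_x) \subseteq \mathfrak h_x$ for every $(\phi,f) \in H_x$. By \Cref{lem:adjoint_action}, for $(X,g) \in \mathfrak h_x$ the first component of $\Ad_{(\phi,f)}(X,g)$ is $\Ad_\phi(X) = T\phi \circ X \circ \phi^{-1}$, whose value at $x$ is $T_{\phi^{-1}(x)}\phi\bigl(X_{\phi^{-1}(x)}\bigr) = T_x\phi(X_x) = T_x\phi(0) = 0$, using $\phi^{-1}(x) = x$. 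Hence $\Ad_{(\phi,f)}(X,g) \in \mathfrak h_x$, so $\mathfrak h_x$ is $\Ad(H_x)$-invariant and in particular invariant under the stabilizer of $\eta_x$.

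The argument is essentially bookkeeping; the only substantive observation is that the Lie bracket of two vector fields vanishing at $x$ again vanishes at $x$, which simultaneously yields the subalgebra property and the isotropy condition, together with the (easily overlooked) point that ``the stabilizer of $\eta_x$'' refers to the stabilizer of $\mu(\eta_x) = (\eta_x, \delta_x)$ and therefore automatically lands inside $H_x$, making admissibility a consequence of \Cref{lem:adjoint_action}.
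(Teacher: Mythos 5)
Your proof is correct and follows essentially the same route as the paper: the bracket of vector fields vanishing at $x$ again vanishes at $x$ (giving both the subalgebra and isotropy conditions via \eqref{eq:Liebracket}), the codimension is $d$ (you argue via the evaluation map $\XcM \to T_xM$, the paper cites \Cref{Prop:LT_of_GM}, but this is the same underlying fact), and admissibility follows from $\phi(x)=x$ together with \Cref{lem:adjoint_action}. Your write-up merely spells out a few steps the paper leaves implicit, such as the explicit evaluation of $\Ad_\phi(X)$ at $x$ and the observation that the stabilizer of $(\eta_x,\delta_x)$ lies in $H_x$.
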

\begin{proof}
By \Cref{Prop:LT_of_GM} (d), $\mathrm{codim}_{\gM} \mathfrak h_x = d$. Since $\mathfrak h_x$ is a Lie algebra, $[X, Y]_x = 0$ whenever $X_x = Y_x = 0$. By \Cref{eq:Liebracket} for the Lie bracket on $\gM$, this implies that $(\eta_x, \delta_x)|_{[ \mathfrak h_x, \mathfrak h_x]} = 0$. The stabilizer of $\eta_x$ is defined by the condition that $(\phi, f) \in \GM$ satisfy $\phi(x) = x$ and $\eta_x = (\phi^{-1})^* \eta_x - df_x$. Using only the condition that $\phi(x) = x$ and \Cref{lem:adjoint_action}, we see that $\mathfrak h_x$ is invariant under the stabilizer, hence admissible.
\end{proof}

The fact that $\mathfrak h_x$ is a real algebraic polarization of $\eta_x$ is also equivalent to the statement that the map $\mathfrak h_x \to \R$ given by $(X, f) \mapsto \langle (\eta_x, \delta_x), (X, f) \rangle = f(x)$ is a Lie algebra representation of $\mathfrak h_x$, which is clear. Up to a constant, this is the derived representation of $H_x \to \C$, $(\phi, f) \mapsto e^{2\pi i f(x)}$. Kirillov's quantization scheme is now carried out by constructing the induced representation of $\GM$. Kosyak \cite{Kosyak} has developed a theory of induced representations for infinite-dimensional Lie groups, which relies on the existence of a section $M \to \GM$; the particulars of our situation allow us to sidestep the section by following a procedure from Folland \cite[Section~6.1]{Folland}. The following lemma is key.

\begin{lemma}
    \label{lem:Kirillov_quant}
    Fix $x \in M$ and let $\mathcal F_0$ be the set of of functions $\Psi \in C(\GM, \C)$ for which $q(\supp\, \Psi) \subset M$ is compact and $\Psi((\phi,f)(\theta, g)) = e^{2\pi i g(x)} \Psi((\phi, f))$ for all $(\phi, f) \in \GM$ and $(\theta, g) \in H_x$. Then, for each $\Psi \in \mathcal F_0$, the prescription 
    \begin{align*}
        M \ni y \mapsto \psi(y) \coloneqq \Psi\bigl( (\phi, 0) \bigr) \quad \text{where } \phi \in \Diff_c(M) \text{ is such that } \phi(x) = y
    \end{align*}
    defines a continuous function $\psi \colon M \to \C$. Moreover, $\Psi \mapsto \psi$ is a bijection $\mathcal F_0 \cong C_c(M)$, and the translate $L_{(\phi,f)} \Psi \colon (\theta, g ) \mapsto \Psi( (\phi, f)^{-1}(\theta, g))$ maps to $y \mapsto \psi(\phi^{-1}(y)) e^{-2\pi i f(y)}$.
\end{lemma}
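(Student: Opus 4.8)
The plan is to unpack the defining covariance relation of $\mathcal F_0$ and use the transitivity of $\DiffcM$ on $M$ together with the structure of $H_x$ to show that the value $\Psi((\phi,0))$ depends only on $y = \phi(x)$. First I would observe that any $\phi \in \DiffcM$ with $\phi(x) = y$ can be modified by an element of $D_x$: if $\phi(x) = \phi'(x) = y$, then $\theta \coloneqq \phi^{-1}\phi' \in D_x$, so $(\phi,0) = (\phi',0)(\theta,0)^{-1} \cdot (\text{stuff})$ — more precisely I would write $(\phi',0) = (\phi,0)(\theta,0)$ with $(\theta,0) \in H_x$, and since $g = 0$ here, the covariance relation gives $\Psi((\phi',0)) = e^{2\pi i \cdot 0} \Psi((\phi,0)) = \Psi((\phi,0))$. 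Hence $\psi(y) \coloneqq \Psi((\phi,0))$ is well defined; surjectivity of $\ev_x$ (from connectedness of $M$, via \cite{MV}, as used in \Cref{Prop:LT_of_GM}) ensures it is defined on all of $M$. Continuity of $\psi$ and the compact support statement both follow from the fact that $q \colon \GM \to \GM/H_x \cong M$ is a submersion (\Cref{Prop:LT_of_GM}(c),(d)) admitting local continuous sections: locally near $y_0$ pick a continuous section $s$ of $\ev_x$ (or of $q$ composed with the identification), and then $\psi = \Psi \circ (\text{zero-section into }\GM) \circ s$ near $y_0$ is a composition of continuous maps; and $\supp\psi$ is the image under $q$ of $\supp\Psi$ intersected with the image of the zero section, which is contained in $q(\supp\Psi)$, compact by hypothesis.

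Next I would establish the bijection $\mathcal F_0 \cong C_c(M)$. Injectivity is clear: if $\psi \equiv 0$ then, using the covariance relation to write an arbitrary $(\phi,f) = (\phi,0)(\id_M, f\circ\phi)$ with $(\id_M, f\circ\phi) \in H_x$, we get $\Psi((\phi,f)) = e^{2\pi i f(x)}\Psi((\phi,0)) = e^{2\pi i f(x)}\psi(\phi(x)) = 0$, so $\Psi \equiv 0$. For surjectivity, given $\psi \in C_c(M)$ I would \emph{define} $\Psi((\phi,f)) \coloneqq e^{2\pi i f(x)}\psi(\phi(x))$ and check: (i) this is well defined (no choices were made), (ii) it satisfies the covariance relation, by a direct computation from the product \eqref{eq:prod_and_inv} — for $(\theta,g) \in H_x$ one has $\theta(x) = x$, so $(\phi,f)(\theta,g) = (\phi\theta, g\circ\phi^{-1} + f)$ evaluates to $e^{2\pi i (g(\phi^{-1}(\phi(x))) + f(x))}\psi(\phi\theta(x)) = e^{2\pi i g(x)} e^{2\pi i f(x)}\psi(\phi(x)) = e^{2\pi i g(x)}\Psi((\phi,f))$, using $\phi\theta(x) = \phi(x)$, (iii) $\Psi$ is continuous, since $(\phi,f) \mapsto (f(x), \phi(x))$ is continuous (evaluation is continuous on the relevant function/mapping spaces, cf.\ \Cref{ev_smooth_subm}) and $\psi$ is continuous, and (iv) $q(\supp\Psi)$ is compact: $\Psi((\phi,f)) \neq 0$ forces $\phi(x) \in \supp\psi$, so $q(\supp\Psi) \subseteq \supp\psi$, which is compact.

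Finally, for the translate: by \eqref{eq:prod_and_inv}, $(\phi,f)^{-1} = (\phi^{-1}, -f\circ\phi)$, so $(\phi,f)^{-1}(\theta,g) = (\phi^{-1}\theta, \, g\circ\phi - f\circ\phi)$. Applying the formula $\Psi((\psi',f')) = e^{2\pi i f'(x)}\psi(\psi'(x))$ established above, $L_{(\phi,f)}\Psi$ corresponds to the function $y \mapsto \Psi((\phi,f)^{-1}(\theta_y, 0))$ where $\theta_y(x) = y$; computing, $(\phi,f)^{-1}(\theta_y,0) = (\phi^{-1}\theta_y, -f\circ\phi)$, whose associated function value at the relevant point is $e^{2\pi i (-f\circ\phi)(x)}\psi(\phi^{-1}\theta_y(x)) = e^{-2\pi i f(\phi(x))}\psi(\phi^{-1}(y))$. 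Hmm — I need the argument of the exponential to come out as $-f(y)$, not $-f(\phi(x))$; here $\phi(x)$ is \emph{not} $y$, so I must be more careful about which group element represents $y$ in $\GM/H_x$. The cleanest fix is to use the coset description directly: $L_{(\phi,f)}\Psi$ is again in $\mathcal F_0$ (the covariance is preserved since left and right translations commute in the appropriate sense), so its associated function is $y \mapsto (L_{(\phi,f)}\Psi)((\sigma,0))$ for any $\sigma$ with $\sigma(x) = y$; taking $\sigma = \phi\theta$ with $\theta(x) = x$ arbitrary won't help, so instead take $\sigma$ with $\sigma(x) = y$ and compute $(\phi,f)^{-1}(\sigma,0) = (\phi^{-1}\sigma, -f\circ\phi)$; its $\mathcal F_0$-value at $x$ uses $\Psi$ on this element, giving $e^{2\pi i(-f\circ\phi)(\text{?})}$ — I realize the subtlety is that $\phi^{-1}\sigma$ does \emph{not} fix $x$, so I cannot directly read off the value. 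The correct approach: write $(\phi^{-1}\sigma, -f\circ\phi) = (\phi^{-1}\sigma, 0)\cdot(\id, -(f\circ\phi)\circ(\phi^{-1}\sigma))$; the second factor is in $H_x$ with $g(x) = -(f\circ\phi)(\phi^{-1}\sigma(x)) = -f(\phi\phi^{-1}\sigma(x)) = -f(\sigma(x)) = -f(y)$, and the first factor maps to $\psi(\phi^{-1}\sigma(x)) = \psi(\phi^{-1}(y))$. Hence $L_{(\phi,f)}\Psi$ corresponds to $y \mapsto e^{-2\pi i f(y)}\psi(\phi^{-1}(y))$, as claimed.

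The main obstacle, as the computation above shows, is precisely this last bookkeeping: one must consistently reduce every group element to the normal form $(\text{diffeo fixing nothing in particular}, 0) \cdot (\text{element of }H_x)$ and track how the $C_c^\infty(M,\R)$-component of the $H_x$-factor gets evaluated at $x$ — the non-commutativity of the semidirect product means the "obvious" substitution $\phi(x) \leftrightarrow y$ is wrong, and the pullback $f \circ \phi$ conspires to restore the correct argument $y$ only after the reduction is done properly. Everything else (well-definedness, continuity via the submersion/section structure of \Cref{Prop:LT_of_GM}, the compact support statements, and the bijection) is routine once the transitivity of $\DiffcM$ on $M$ and the definition of $\mathcal F_0$ are in hand.
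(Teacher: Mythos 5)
Your overall route is the paper's route: well-definedness of $\psi$ via the covariance relation with $g=0$, continuity via the submersion $p$ (local sections), an explicit inverse for $\Psi\mapsto\psi$, and the translate computed by reducing to the normal form $(\sigma,0)\cdot(\text{element of }H_x)$. However, there is a genuine error in your surjectivity step: the candidate inverse $\Psi\bigl((\phi,f)\bigr)\coloneqq e^{2\pi i f(x)}\psi(\phi(x))$ is wrong, and the function it defines does \emph{not} lie in $\mathcal F_0$. Indeed, writing $(\phi,f)=(\phi,0)(\id_M,f\circ\phi)$ with $(\id_M,f\circ\phi)\in H_x$, the covariance relation forces
\begin{align*}
\Psi\bigl((\phi,f)\bigr)=e^{2\pi i (f\circ\phi)(x)}\,\Psi\bigl((\phi,0)\bigr)=\psi\bigl(\phi(x)\bigr)\,e^{2\pi i f(\phi(x))},
\end{align*}
i.e.\ the phase must be evaluated at $\phi(x)$, not at $x$; this is exactly the formula the paper uses for the inverse. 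Your verification of covariance hides the problem through an evaluation slip: for $(\theta,g)\in H_x$ one has $(\phi,f)(\theta,g)=(\phi\theta,\,g\circ\phi^{-1}+f)$, so with \emph{your} definition the exponent is $(g\circ\phi^{-1}+f)(x)=g(\phi^{-1}(x))+f(x)$, whereas you wrote $g(\phi^{-1}(\phi(x)))+f(x)=g(x)+f(x)$. Since membership in $H_x$ imposes no condition on $g$, one can choose $g$ with $g(x)=0$ but $g(\phi^{-1}(x))=\tfrac12$, and then your $\Psi$ violates $\Psi((\phi,f)(\theta,g))=e^{2\pi i g(x)}\Psi((\phi,f))$ whenever $\psi(\phi(x))\neq 0$. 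The same slip appears in your injectivity argument ($e^{2\pi i f(x)}$ should be $e^{2\pi i f(\phi(x))}$), though there it is harmless because only the vanishing of $\Psi$ matters.

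Notice that your own final paragraph already contains the correct mechanism: when computing the translate you decompose $(\phi^{-1}\sigma,\,-f\circ\phi)=(\phi^{-1}\sigma,0)\cdot(\id_M,\,-f\circ\sigma)$ and evaluate the $H_x$-phase at $x$, obtaining $-f(\sigma(x))$ --- that is, the phase of a general element $(\phi',f')$ is $e^{2\pi i f'(\phi'(x))}$, which contradicts the formula you defined in the surjectivity step. Replacing your definition by $\Psi((\phi,f))\coloneqq\psi(\phi(x))e^{2\pi i f(\phi(x))}$ repairs the argument: covariance then follows because $\phi\theta(x)=\phi(x)$ and $(g\circ\phi^{-1}+f)(\phi(x))=g(x)+f(\phi(x))$, and the translate computation you give at the end goes through unchanged and agrees with the claimed formula $y\mapsto\psi(\phi^{-1}(y))e^{-2\pi i f(y)}$. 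The remaining points (well-definedness, continuity via local sections of the submersion, the compact-support bookkeeping) are fine and essentially identical to the paper's proof.
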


\begin{proof}
    To see that $\psi$ is well-defined, note that if $\phi(x) = \theta(x)$, then 
    \begin{align*}
        \Psi \bigl((\phi, 0) \bigr) = \Psi \bigl( (\theta, 0)(\theta^{-1} \circ \phi, 0) \bigr) = \Psi\bigl( (\theta, 0 \bigr) \bigr)
    \end{align*}
    since $(\theta^{-1} \circ \phi, 0) \in H_x$ and $\Psi \in \mathcal F_0$. Since $\psi \circ p= \hat{\Psi}$, where $\hat{\Psi}$ is the composition $\DiffcM \hookrightarrow \GM \xrightarrow\Psi \C$, continuity of $\psi$ follows from continuity of $\hat\Psi$ and the fact that $p$ is a submersion to $M \cong \DiffcM /D_x$ (\Cref{Prop:LT_of_GM} (d)). For bijectivity of the correspondence $\Psi \mapsto \psi$, one checks that its inverse is given by 
    \begin{align*}
        \Psi \bigl( (\phi, f) \bigr) \coloneqq \psi \bigl(\phi(x) \bigr) e^{2\pi i f(\phi(x))},
    \end{align*}
    using that $\Psi ((\phi, f)) = \Psi ((\phi, 0)(\id_M, f \circ \phi) ) = \Psi ((\phi, 0)) e^{2\pi i f(\phi(x))}$ for any $\Psi \in \mathcal F_0$. The remaining verifications are straightforward calculations.
\end{proof}

\noindent Folland's construction proceeds as follows:\ (i) equip $\GM /H_x$ with a quasi-invariant measure $\mu$ (i.e.\ the pullback $L_{(\phi, f)}^*\mu$ of $\mu$ by left translation $L_{(\phi, f)}$ by any element $(\phi, f) \in \GM$ is absolutely continuous with respect to $\mu$); (ii) note that the function $\GM \to \C$ given by
\begin{align*}
    (\phi, f) \mapsto \Psi_1 \bigl( (\phi, f) \bigr) \overline{\Psi_2 \bigl( (\phi, f) \bigr)} \quad \text{for } \Psi_1, \Psi_2 \in \mathcal F_0
\end{align*}
(where the bar denotes complex conjugation) descends to a function $\Psi_1 \overline{\Psi_2}$ on $\GM/H_x$; (iii) define the inner product
\begin{align}
    \label{eq:inner_prod}
    \langle \Psi_1, \Psi_2 \rangle \coloneqq \int_{\GM/H_x} \bigl( \Psi_1 \overline{\Psi_2} \bigr)(y)\,d\mu(y)
\end{align}
on $\mathcal F_0$ and take the Hilbert space completion $\overline{\mathcal F_0}$; (iv) define the unitary operators $\rho_{(\phi, f)}$ on $\mathcal F_0$ by
\begin{align*}
    \rho_{(\phi, f)} \Psi = \sqrt{ \frac{d L_{(\phi, f)}^* \mu}{d\mu} } L_{(\phi, f)} \Psi \quad \text{for } (\phi, f) \in \GM
\end{align*}
and extend to $\overline{\mathcal F_0}$ by continuity.

By \Cref{Prop:LT_of_GM}, $\GM /H_x \cong M$ as smooth manifolds, and the Riemannian measure $V_g$ is quasi-invariant under $\GM$ by \Cref{lem:Radon_Nikodym_formula}. The inner product \eqref{eq:inner_prod} on $\mathcal F_0 \cong C_c(M)$ (\Cref{lem:Kirillov_quant}) with respect to this measure is clearly the $L^2$ inner product with respect to $V_g$, so the Hilbert space completion can be identified with $L^2(M)$. By the second part of \Cref{lem:Kirillov_quant}, the operators $\rho_{(\phi, f)}$ correspond to the operators $\rho^1_{(\phi, f)}$ from \Cref{eq:def_of_rho}. This proves: 

\begin{theorem}
    \label{thm:Kirillovs_quantization}
    The induction of $H_x \to \C$, $(\phi, f) \mapsto e^{2\pi i f(x)}$ to $\GM$ can be identified with the representation $\rho^1 \colon \GM \to \mathcal U(L^2(M))$.
\end{theorem}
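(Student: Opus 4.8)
The plan is to carry out the four-step procedure of Folland \cite[Section~6.1]{Folland} explicitly, feeding it the three structural inputs already in hand: the identification $\GM/H_x \cong M$ of \Cref{Prop:LT_of_GM}(d), the explicit (smooth, strictly positive) Radon--Nikodym derivative of \Cref{lem:Radon_Nikodym_formula}, and the realization $\mathcal F_0 \cong C_c(M)$ of \Cref{lem:Kirillov_quant}. Only the case $h = 1$ is at issue here; see \Cref{rem:hbar_quant} for general $h$.

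First I would fix the quasi-invariant measure and compute the inner product. Transport $\vol$ to $\GM/H_x$ along the diffeomorphism $\GM/H_x \cong M$ of \Cref{Prop:LT_of_GM}(d) and call the result $\mu$; for $(\phi,f)\in\GM$, the translate $L_{(\phi,f)}^*\mu$ then corresponds, under the same identification, to $(\phi^{-1})^*\vol$, whose Radon--Nikodym derivative with respect to $\vol$ is by \Cref{lem:Radon_Nikodym_formula} a smooth function on $M$, manifestly positive as a ratio of Gram determinants of a Riemannian metric, so $\mu$ is quasi-invariant and genuinely so (this is why a square root appears in step~(iv)). For $\Psi_1,\Psi_2\in\mathcal F_0$ the product $\Psi_1\overline{\Psi_2}$ is $H_x$-invariant, since the cocycle $e^{2\pi i g(x)}$ has modulus one and cancels against its conjugate, so it descends to $\GM/H_x$; evaluating at representatives $(\phi,0)$ with $\phi(x)=y$ and using \Cref{lem:Kirillov_quant} shows that under $\GM/H_x\cong M$ this descended function is $\psi_1\overline{\psi_2}$. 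Hence \eqref{eq:inner_prod} reads $\int_M \psi_1\overline{\psi_2}\,d\vol$, i.e.\ the $L^2(M,\vol)$ inner product, so the completion $\overline{\mathcal F_0}$ is canonically $L^2(M)$ with $\mathcal F_0\cong C_c(M)$ dense inside.

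It then remains to match the operators. By step~(iv), $\rho_{(\phi,f)}\Psi = \sqrt{dL_{(\phi,f)}^*\mu/d\mu}\; L_{(\phi,f)}\Psi$. The last assertion of \Cref{lem:Kirillov_quant} identifies $L_{(\phi,f)}\Psi$, under $\mathcal F_0\cong C_c(M)$, with $y\mapsto e^{-2\pi i f(y)}\psi(\phi^{-1}(y))$, while by the previous step the density $dL_{(\phi,f)}^*\mu/d\mu$ corresponds to $y\mapsto (d(\phi^{-1})^*\vol/d\vol)(y)$, the function of \Cref{lem:Radon_Nikodym_formula}. Collecting the factors, $\rho_{(\phi,f)}$ acts on $\psi\in C_c(M)$ by $\psi(y)\mapsto e^{-2\pi i f(y)}\sqrt{d(\phi^{-1})^*\vol/d\vol}(y)\,\psi(\phi^{-1}(y))$, which is exactly $\rho^1_{(\phi,f)}$ of \eqref{eq:def_of_rho}. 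As the $\rho_{(\phi,f)}$ are unitary and $C_c(M)$ is dense in $L^2(M)$, they extend uniquely and agree with the $\rho^1_{(\phi,f)}$, giving the claimed identification.

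The main obstacle is not any single calculation but the passage from Folland's locally compact framework to the infinite-dimensional group $\GM$: one must confirm that the abstract ingredients Folland consumes — a quasi-invariant measure on $\GM/H_x$, the descent of $\Psi_1\overline{\Psi_2}$ to an integrable function on the quotient, density of the induction space in its completion, and consistent cocycle/Radon--Nikodym bookkeeping with no Haar measure available — are precisely what \Cref{Prop:LT_of_GM}, \Cref{lem:Radon_Nikodym_formula} and \Cref{lem:Kirillov_quant} provide. Once the hypotheses of those three results are seen to match the data Folland's construction requires, the rest is bookkeeping; a minor point is the well-definedness (here smoothness) of the square root of the Radon--Nikodym derivative, which is immediate from its positivity and smoothness in \Cref{lem:Radon_Nikodym_formula}.
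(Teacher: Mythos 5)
Your proposal is correct and coincides with the paper's own argument: the paper likewise runs Folland's four-step induction using $\GM/H_x \cong M$ from \Cref{Prop:LT_of_GM}, quasi-invariance of $V_g$ via \Cref{lem:Radon_Nikodym_formula}, and the identification $\mathcal F_0 \cong C_c(M)$ with the translation formula from \Cref{lem:Kirillov_quant}, so that the inner product becomes the $L^2(M)$ one and the induced operators match $\rho^1_{(\phi,f)}$ of \eqref{eq:def_of_rho}. You merely spell out the measure transport and cocycle/Radon--Nikodym bookkeeping in more detail than the paper does, which is fine.
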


\begin{remark}[$h \neq 1$]
    \label{rem:hbar_quant}
    A simple and general way of incorporating $h \in (0,1)$ is to induce from the representations $(\phi, f) \mapsto e^{2\pi if(x)/h}$ of $H_x$ (modifying $\mathcal F_0$ accordingly).
\end{remark}

\noindent The derived representations $d\rho^h$ recover geometric quantization (\Cref{prop:derived_rep}).

\section{Asymptotic Character Formulas on the Tangent Groupoid}

In \Cref{sec:prelim_tangent_groupoid}, we briefly recall Lie groupoids and their convolution algebras, and we give a complete construction of the tangent groupoid $\TGM$ of a Riemannian manifold $M$. The construction is due to Connes \cite{Connes_NG}; our exposition is inspired by Landsman \cite{Landsman_Topics, Landsman_Foundations}. In \Cref{sec:asymptotic_characeter_formula}, we prove two structural results relating the group $\GM$ to the convolution algebra of $\TGM$ (\Cref{thm:double_centralizer} and \Cref{prop:translation_of_ops}) and we obtain the character formulas, \Cref{prop:easy_trace_formula} and \Cref{cor:character_formula}.

\subsection{Lie Groupoids and Convolution Algebras}
\label{sec:prelim_tangent_groupoid}


In categorical language, a groupoid is just a small category where all the morphisms are isomorphisms. We give a brief definition to fix the notation and terminology.

\begin{define}[Groupoids]
    A \textit{groupoid} consists of two sets, $G$ and $G_0$ (the morphisms and the objects, respectively), along with \textit{source} and \textit{target} maps, $s \colon G \to G_0$ and $t\colon G \to G_0$, and a composition law 
    \begin{align*}
        G^{(2)} \coloneqq \{ (a, b) \in G \times G : s(a) = t(b)  \} \to G , \quad (a, b) \mapsto ab,
    \end{align*}
    satisfying the usual requirements of identities, inverses and associativity. We denote the groupoid by $G$ or by $G \arrows G_0$, where the arrows represent the source and target.
\end{define}

\noindent Given a groupoid $G \arrows G_0$, we write $G(x, y) \coloneqq \{ a \in G : s(a) = x \text{ and } t(a) = y \}$ for the set of morphisms from $x \in G_0$ to $y \in G_0$. Likewise, $G(-, y) \coloneqq t^{-1}(y)$ and $G(x,-)\coloneqq s^{-1}(x)$. The existence of identities gives us an inclusion $G_0 \to G$. As in the case of groups, we denote inversion $G \to G$ by $a \mapsto a^{-1}$.

\begin{define}[Lie Groupoids]
    A \textit{Lie groupoid} \cite{Mac05} is a groupoid $G \arrows G_0$ where $G$ and $G_0$ are smooth manifolds (possibly with boundaries, cf.\ \cite{Lee13}), the source and target maps are smooth submersions, and the composition map $G^{(2)} \to G$, the inversion $G \to G$ and the inclusion $G_0 \to G$ are all smooth.\footnote{The requirement that $s,t$ are submersions implies that $G^{(2)}$ is a smooth submanifold of $G \times G$, hence it makes sense to require smoothness of the composition $G^{(2)} \to G$.}

    To every Lie groupoid $G \arrows G_0$ there is an associated Lie algebroid $\Lf(G \arrows G_0)$ which is a vector bundle $E \to G_0$ carrying some additional structure \cite{Mac05}. 
\end{define}

The convolution algebra of a Lie groupoid is a generalization of the group algebra of a locally compact group. To define it, we need an analogue of the Haar measure—a Haar system. There are multiple definitions of Haar systems in the literature. We are following Landsman, \cite[Section~III.3.3]{Landsman_Topics} and \cite[Appendix~C.17]{Landsman_Foundations}. Additional details and proofs can be found in Paterson \cite{Paterson}. 

\begin{define}[Haar Systems]
    A \textit{Haar system} on a Lie groupoid $G \arrows G_0$ is a family $(\mu^y)_{y \in G_0}$ of measures, where $\mu^y$ is a positive measure on $G(-,y)$, satisfying the following requirements.
    \begin{itemize}
        \item In any chart of $G(-, y)$, $\mu^y$ is equivalent to the Lebesgue measure.
        \item The family is \textit{left-invariant}:\ for $f \in C_c^\infty(G)$ and $a \in G(x, y)$, we have
        \begin{align*}
             \int_{G(-, x)} f(ab)\,d\mu^x(b) = \int_{G(-, y)} f(c)\,d\mu^y(c).
        \end{align*}
        \item For any $f \in C_c^\infty(G)$, the map $G_0 \to \C, \; y \mapsto \int_{G(-, y)} f(a)\,d\mu^y(a)$ is smooth.
    \end{itemize}
\end{define}

\noindent Every locally compact Lie groupoid possesses a Haar system \cite{Landsman_Topics}. 

\begin{define}[Convolution Algebras]
    Let $G \arrows G_0$ be a Lie groupoid equipped with a Haar system $(\mu^y)_{y \in G_0}$. For $f, g  \in C_c^\infty(G)$, their \textit{convolution} is defined as
    \begin{align*}
      f * g \colon G \to \C, \quad   (f * g)(a) = \int_{G(\blank,  t(a))} f(b) g(b^{-1}a) \,d\mu^{t(a)}(b).
    \end{align*}
    We also define an involution $f \mapsto f^*$ by $f^*(a) = \overline{f(a^{-1})}$ (where the bar denotes complex conjugation). These operations turn $C_c^\infty(G)$ into a $*$-algebra.
\end{define}

A generalization of the left regular representation now gives us (reduced) groupoid C$^*$-algebras. For each $x \in G_0$, we define a measure $\mu_x$ on $G(x, \blank)$ by $\mu_x(A) = \mu^x(A^{-1})$ for measurable $A^{-1} \subset G(\blank, x)$. We then obtain a $*$-algebra representation $\pi_x$ of $C_c^\infty(G)$ on $L^2(G(x, \blank), \mu_x)$ by 
\begin{align*}
    \bigl(\pi_x(f) \psi \bigr)(a) = \int_{G(\blank, t(a))} f(b)\psi(b^{-1}a) \, d\mu^{t(a)}(b), 
\end{align*}
for $\psi \in L^2(G(x, \blank), \mu_x)$ and $a \in G(x, \blank)$. The norm $\|f\|_r \coloneqq \sup_{x \in G_0} \| \pi_x(f) \|$ allows us to complete $C_c^\infty(G)$ to a C$^*$-algebra $C^*_r(G)$, the \textit{reduced groupoid C$^*$-algebra} of $G$. There is also a non-reduced groupoid C$^*$-algebra associated to $G$, denoted $C^*(G)$ and constructed by taking the supremum over a larger class of representation of $C_c(G)$. For all the groupoids we are interested in, $C^*_r(G)$ and $C^*(G)$ coincide \cite{Landsman_Ramazan}, so we will simply speak of the groupoid C$^*$-algebra $C^*(G)$ in those cases. The Lie algebroid of a Lie groupoid is itself a Lie groupoid with respect to fiberwise addition. Thus, we may also consider the convolution algebra of a Lie algebroid.

The following examples constitute building blocks for the tangent groupoid.

\begin{example}[The Pair Groupoid $M \times M$]
    \label{ex:pair_groupoid}
    For a manifold $M$, the cartesian product $M \times M$ forms a Lie groupoid $M \times M \rightrightarrows M$ by defining
    \begin{align*}
        s(x_2, x_1) = x_1, \quad t(x_2, x_1) = x_2 \quad \text{and} \quad (x_3, x_2)(x_2, x_1) = (x_3, x_1)
    \end{align*}
    for $(x_3, x_2), (x_2, x_1) \in M \times M$. Equipping $t^{-1}(x) \cong M$ with a Riemannian measure $\vol$ (independently of $x$) defines a Haar system on $M \times M$. We can identify elements of $C_c^\infty(M \times M)$ with integral kernels of operators on $L^2(M)$. Then, the convolution in $C_c^\infty(M \times M)$ corresponds to composition, the involution corresponds to the adjoint, and the canonical representations $\pi_x$ on $L^2(s^{-1}(x), \mu_x) \cong L^2(M, \vol) \eqqcolon L^2(M)$ all correspond to the usual action of $\mathcal B(L^2(M))$ on $L^2(M)$. The C$^*$-algebra $C^*(M \times M)$ can be identified with the compact operators $\cptops(L^2(M))$.\footnote{This follows by noting that $C_c^\infty(M \times M)$ is dense in $L^2(M \times M)$, that $L^2(M \times M)$ corresponds to Hilbert-Schmidt operators on $L^2(M)$ (via integral kernels) and that Hilbert-Schmidt operators are dense in $\cptops(L^2(M))$.}
\end{example}

\begin{example}[The Tangent Bundle $TM$]
    \label{ex:tangent_bundle}
    The Lie algebroid of the pair groupoid from \Cref{ex:pair_groupoid} can, as a vector bundle, be identified with the tangent bundle $TM \xrightarrow\pi M$. Equip $M$ with a Riemannian metric $g$, let $q \colon U \to \Rd$ be coordinates on $M$, and $(q, v) \colon TU \to \R^{2d}$ tangent coordinates (see \Cref{setup:coordinates}).
    A Haar system on $TM \arrows M$ is obtained by equipping $T_xM$ with the measure\footnote{This measure is coordinate independent for the same reason that the Riemannian measure is coordinate independent:\ $v^j$ and $dq^j$ transform in the same manner via change of coordinates.} $\lambda_x$ with density
    \begin{align*}
        d\lambda_x(v) = \sqrt{|\det g_q(x)|} \,d^d v = \sqrt{|\det g_q(x)|} \cdot |dv^1 \wedge \cdots \wedge dv^d |.
    \end{align*}
    The convolution in $C_c^\infty(TM)$ is given by fiberwise convolution. We can convert this to pointwise multiplication on $T^*M$ by introducing a fiberwise Fourier transform, as follows. 
    
    On $T^*M$, we have the \textit{Liouville measure} $V_L$, defined independently of the metric. In cotangent coordinates $(q, p) \colon T^*U \to \R^{2d}$, the Liouville measure is determined by the density $dV_L(q, p) = d^d p \, d^d q$. We now define the measure $\lambda^*_x$ on $T_x^*M$ in terms of the density
    \begin{align*}
        d\lambda^*_x(p) = \frac1{\sqrt{|\det g_q(x)|}} d^dp = \frac1{\sqrt{|\det g_q(x)|}} |dp^1 \wedge \cdots \wedge dp^d |.
    \end{align*}
    This measure is defined so that $\int_{T^*M} a \, dV_L = \int_M \int_{T_x^*M} a \, d\lambda^*_x \, d\vol$ for all $a \in C_c^\infty(T^*M)$. 
    Let now $a \colon T^*M \to \C$ be any function that is $\lambda^*_x$-integrable on each fiber $T_x^*M$. We define its \textit{fiberwise Fourier transform} $\fiberF a \colon TM \to \C$ by
    \begin{align*}
        \fiberF a(v_x) = \int_{T_x^*M} a(\eta_x) e^{2 \pi i \eta_x(v_x)} \, d\lambda^*_x(\eta_x).
    \end{align*}
    Our domain for the fiberwise Fourier transform will be the Paley–Wiener space 
    \begin{align*}
        \PW(T^*M) \coloneqq \bigl\{ a \in C^\infty(T^*M) : \fiberF a \in C_c^\infty(TM)\bigr\}.
    \end{align*}
    Note that $a \in \PW(T^*M)$ will be a Schwartz function on each fiber $T^*_xM$, and that the projection of $\mathrm{supp}(a)$ onto $M$ will be compact. Consequently, $\PW(T^*M) \subset C_0(T^*M)$, where $C_0(T^*M)$ denotes the space of continuous complex-valued functions on $T^*M$ vanishing at infinity. Note also that $\fiberF$ maps $\PW(T^*M)$ bijectively onto $C_c^\infty(TM)$. This follows from the observation that 
    \begin{align*}
        \fiberF^{-1} b(\eta_x) = \int_{T_xM} \, b(v_x) e^{-2\pi i \eta_x(v_x)} \, d\lambda_x(v_x) \quad \text{for } b \in C_c^\infty(TM).
    \end{align*}
    Identifying the convolution algebra $C_c^\infty(TM)$ with $\PW(T^*M)$, the convolution becomes pointwise multiplication in $\PW(T^*M)$, and the involution becomes complex conjugation. Moreover, the canonical representations $\pi_x$ of $\PW(T^*M)$ on $L^2(s^{-1}(x), \mu_x) \cong L^2(\Rd)$ become restriction to $T_x^*M \cong \Rd$ followed by pointwise multiplication. Thus, $\|a\|_r = \sup_{x \in M} \sup_{v_x \in T_x^*M} |a(v_x)| = \|a\|_\infty$, and $C^*(TM)$ can be identified with $C_0(T^*M)$. 
\end{example}

\subsubsection*{The Tangent Groupoid}

The flexibility of Lie groupoids allows us to combine a Lie groupoid $G\arrows G_0$ and its Lie algebroid $\Lf(G\arrows G_0)$ into a larger Lie groupoid—the \textit{tangent groupoid} \cite[Appendix~C.16]{Landsman_Foundations}. We will explain this construction only in the case of the pair groupoid of a Riemannian manifold $M$ (\Cref{ex:pair_groupoid}), which yields the tangent groupoid $\T M$ of $M$. The underlying set is
\begin{align*}
    \TGM \coloneqq \Bigl( \{0\} \times TM \Bigr) \coprod \Bigl( (0, 1] \times M \times M \Bigr),
\end{align*}
and we declare $(0, 1] \times M \times M \subset \T M$ to be an open set and declare the inclusion $\iota \colon V_\interior \coloneqq (0, 1] \times M \times M \to \TGM$ to be a diffeomorphism onto its image, $U_\interior \coloneqq \iota(V_\interior)$. It remains to describe the topology and smooth structure on a neighborhood of $\{0\} \times TM$. 

Let $\exp_x$ denote the Riemannian exponential map at $x \in M$. The tubular neighborhood theorem, \cite[Theorem 6.24]{Lee13} implies the existence of an open set $U_{TM} \subseteq TM$, in every fiber a convex neighborhood of $0$, such that 
\begin{equation}
    \label{eq:exp}
    \begin{split}
    \expTGM \colon U_{TM} &\to M \times M, \quad
    \expTGM(v_x) = \bigl(x, \exp_x(-v_x) \bigr)
    \end{split}
\end{equation}
is a diffeomorphism onto its image. This plays the role of an exponential map for the pair groupoid $M \times M$. Now,
\begin{align}
    \label{eq:V_boundary}
    V_\boundary \coloneqq \bigl\{ (h, v_x) : h v_x \in U_{TM} \bigr\}  \subset [0, 1] \times TM
\end{align}
is an open neighborhood of $\{0\} \times TM$ in $[0, 1] \times TM$, and the map
\begin{equation}
    \label{eq:Phi_boundary}
    \begin{aligned}
    \beta \colon V_\boundary \to \TGM , \quad
    (h, v_x) \mapsto \begin{cases}
        (0, v_x) &\text{for } h = 0,\\
        \bigl( h, \expTGM(h v_x) \bigr) &\text{for } h \in (0, 1] .
    \end{cases}
    \end{aligned}
\end{equation}
is injective. For each $h > 0$, $\beta$ glues as much of $TM$ onto $M \times M$ as is allowed by the tubular neighborhood theorem and the choice of $U_{TM}$. In the limit $h \to 0$, this tends to all of $TM$. 

We now declare $\beta$ to be a diffeomorphism onto its image, so that the smooth structure on $U_\boundary \coloneqq \beta(V_\boundary) \subset \TGM$ is determined by the usual smooth structure on $V_\boundary \subset TM \times [0, 1]$. The smooth structure on $\TGM$ is well-defined because $\iota^{-1} \circ \beta$ is a diffeomorphism. Indeed, it is just the map
\begin{align*}
    V_\boundary \cap \{h > 0 \} \to (0, 1] \times M \times M, \quad
    (h, v_x) \to \bigl( h, x, \exp_x( - h v_x) \bigr) 
\end{align*}
with the codomain restricted to its image, which is a diffeomorphism by the definition of $V_\boundary$ and the fact that $\expTGM$ is a diffeomorphism onto its image. 

The groupoid structure $\TGM \arrows [0, 1] \times M$ is given by the usual groupoid structures on $TM$ and $M \times M$, i.e.
\begin{align*}
    s(0, v_x) = t(0, v_x) = (0, x), \quad s(h, x, y) = (h, y) \quad \text{and} \quad t(h, x, y) = (h, x),
\end{align*}
with fiberwise addition over $\{0\} \times TM$ and the pair groupoid structure (\Cref{ex:pair_groupoid}) over $\{h\} \times M \times M$.

\subsubsection*{The Convolution Algebra of the Tangent Groupoid}

We obtain a Haar system on $\TGM$ by setting
\begin{align}
    \label{eq:Haar_system}
    \mu^{(0, x)} &= \lambda_x \quad \text{and} \quad \mu^{(h, x)} = h^{-d}\vol
\end{align}
(Examples \ref{ex:pair_groupoid} and \ref{ex:tangent_bundle}). This is proved in \Cref{subsec:app_tangent_groupoid}.

\begin{remark}
    \label{rem:hbar}
    For $h > 0$, the canonical representation $\pi_{(h, x)}$ of $C_c^\infty(\TGM)$ is on $L^2(M, h^{-d}\vol)$. From the perspective of quantization, it is more natural to work only with $L^2(M, V_g) \eqqcolon L^2(M)$. This results in an additional factor of $h^{-d}$ in our integral kernels.
\end{remark}

The convolution algebra $C_c^\infty(\TGM)$ of the tangent groupoid naturally comes equipped with restrictions
\begin{align*}
r_0 &: C_c^\infty(\TGM) \to C_c^\infty(TM) \cong \PW(T^*M)  \\ 
\text{and} \quad r_h &: C_c^\infty(\TGM) \to C_c^\infty(M \times M),
\end{align*}
defined by restriction to the fiber over $h \in [0, 1]$. By \cite[Theorem~6.4]{Landsman_Ramazan}, these extend by continuity to surjective $*$-algebra homomorphisms
\begin{align*}
    r_0 &: C^*(\TGM) \to C^*(TM) \cong C_0(T^*M) \\
    \text{and} \quad r_h &: C^*(\TGM) \to C^*(M \times M) \cong_h \cptops(L^2(M))
\end{align*}
that turn $C^*(\TGM)$ into a continuous field of C$^*$-algebras over $[0, 1]$ (see \cite{Landsman_Ramazan} for more on continuous fields of C$^*$-algebras, we shall only discuss the necessary properties here). By \Cref{rem:hbar}, the $h$-dependent isomorphism $C^*(M \times M) \cong \cptops(L^2(M))$ is given by (the extension of) $F_h \in C_c^\infty(M \times M)$ corresponding to the operator with integral kernel $K_h \coloneqq h^{-d} F_h$. The continuous field structure of $C^*(\TGM)$ gives
\begin{align*}
    C^*(\TGM) \subset C_0(T^*M) \times \prod\nolimits_{h \in (0, 1]} \cptops(L^2(M)) \quad \text{as C$^*$-algebras}
\end{align*}
(where the right hand side is a product of C$^*$-algebras, which carries the supremum norm over its factors). Thus, we will denote elements of $C^*(\TGM)$ by $(a, T_h)$, with $a \in C_0(T^*M)$ and $T_h \in \cptops(L^2(M))$. To say that $(a, T_h)$ belongs to $C_c^\infty(\TGM)$ becomes the statement that
\begin{align*}
    (0, v_x) \mapsto \fiberF a(v_x) \quad \text{and} \quad (h, x, y) \mapsto h^d K_h(x, y),
\end{align*}
where $K_h$ is the integral kernel of $T_h$, defines a smooth compactly supported function on $\TGM$. In particular, $\fiberF a \in C_c^\infty(TM)$ and $K_h \in C_c^\infty(M \times M)$.



\subsection{The Asymptotic Character Formulas}
\label{sec:asymptotic_characeter_formula}

In this section, we study how the actions $\alpha^0$ on $T^*M$ and $\rho^h$ on $L^2(M)$ extend to the tangent groupoid, and we obtain the asymptotic character formulas. The main ingredients are the following two extension lemmas. 

\begin{lemma}
    \label{lem:scalar_extension_to_TGM}
    Let $g \colon [0, 1] \times M \to \C$ be a smooth map. Then, $\tilde g(0, v_x) = g(0, x)$ and $\tilde g(h, x, y) = g(h, x)$ defines a smooth map $\tilde g \colon \TGM \to \C$.
\end{lemma}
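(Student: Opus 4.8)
The plan is to check smoothness of $\tilde g$ separately on the two charts that cover $\TGM$, namely the interior chart $\iota\colon V_\interior = (0,1]\times M\times M \to \TGM$ and the boundary chart $\beta\colon V_\boundary \to \TGM$ (with $V_\boundary$ as in \eqref{eq:V_boundary}), and then note that smoothness on a cover by charts implies smoothness globally. On the interior, $\tilde g\circ\iota$ is the map $(h,x,y)\mapsto g(h,x)$, which is $g$ precomposed with the smooth projection $(0,1]\times M\times M \to (0,1]\times M$, hence smooth. So the only real content is smoothness near $\{0\}\times TM$, i.e.\ smoothness of $\tilde g\circ\beta\colon V_\boundary \to \C$.

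First I would write out $\tilde g\circ\beta$ explicitly. For $h>0$ we have $\beta(h,v_x) = (h,\expTGM(h v_x)) = (h, x, \exp_x(-h v_x))$, so by definition of $\tilde g$ on the interior part, $(\tilde g\circ\beta)(h,v_x) = g(h,x)$. For $h = 0$, $\beta(0,v_x) = (0,v_x)$ and $\tilde g(0,v_x) = g(0,x)$. Thus on all of $V_\boundary$,
\begin{align*}
    (\tilde g\circ\beta)(h, v_x) = g(h, x) = g\bigl(h, \pi(v_x)\bigr),
\end{align*}
where $\pi\colon TM\to M$ is the bundle projection. In other words, $\tilde g\circ\beta = g\circ(\mathrm{id}_{[0,1]}\times\pi)|_{V_\boundary}$, which is a composition of the smooth map $V_\boundary \hookrightarrow [0,1]\times TM \xrightarrow{\mathrm{id}\times\pi} [0,1]\times M$ with the smooth map $g$. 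Hence $\tilde g\circ\beta$ is smooth on $V_\boundary$.

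Since the two open sets $U_\interior = \iota(V_\interior)$ and $U_\boundary = \beta(V_\boundary)$ cover $\TGM$ (the former contains all of $(0,1]\times M\times M$ and the latter is a neighborhood of $\{0\}\times TM$), and $\tilde g$ is smooth when read in each of the two defining charts, $\tilde g$ is smooth as a map $\TGM \to \C$. The one point that deserves a sentence of care — and the only place anything could go wrong — is the compatibility on the overlap: one must observe that the two formulas for $\tilde g$ agree where both are defined, which is exactly the identity $\tilde g(\iota(h,x,y)) = g(h,x) = \tilde g(\beta(h, v_x))$ whenever $\iota(h,x,y) = \beta(h,v_x)$, i.e.\ whenever $y = \exp_x(-h v_x)$ and $h>0$; this holds because both sides equal $g(h,x)$ and $\pi(v_x) = x$. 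So there is really no obstacle here; the lemma is essentially the observation that $\tilde g$ is pulled back from $[0,1]\times M$ along the (smooth, but not globally a submersion at $h=0$ in an obvious way) groupoid-anchor-type map $\TGM \to [0,1]\times M$ sending $(0,v_x)\mapsto(0,x)$ and $(h,x,y)\mapsto(h,x)$ — and checking this map is smooth in the boundary chart is the computation above.
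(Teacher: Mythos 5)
Your proposal is correct and follows essentially the same route as the paper: smoothness on the interior piece $(0,1]\times M\times M$ is immediate, and in the boundary chart one computes $(\tilde g\circ\beta)(h,v_x)=g\circ(\id_{[0,1]}\times\pi)(h,v_x)$, which is smooth on $V_\boundary$. The only extra content in your write-up is the overlap check, which is harmless but not needed, since $\tilde g$ is defined directly on the disjoint union underlying $\TGM$ and the charts merely provide coordinates in which to read off smoothness.
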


\begin{proof}
    It is clear that $\tilde g$ is smooth on $(0, 1] \times M \times M \subset \TGM$. Letting $\pi \colon TM \to M$ be the canonical projection and employing the diffeomorphism $\beta$ \eqref{eq:Phi_boundary}, we see that $(\tilde g \circ \beta)(h, v_x) = g(h, x) = g \circ (\id_{[0, 1]} \times\pi)(h, v_x)$, which is smooth on $V_\beta \subset [0, 1] \times TM$.   
\end{proof}

\begin{lemma}  
    \label{lem:diffeo_extension_to_TGM}
    Let $F \colon [0, 1] \times M \to M$ be a smooth map. Then,
    \begin{align*}
        \widetilde F(0, v_x) = \bigl( 0, T_{(0, x)}F(1, v_x) \bigr) \quad \text{and} \quad \widetilde F(h, x, y) = \bigl(h, F(h, x), F(0, y) \bigr) 
    \end{align*}
    with $(1, v_x) \in \R \times T_x M \cong T_{(0, x)} ([0, 1] \times M)$, defines a smooth map $\widetilde F \colon \TGM \to \TGM$.
\end{lemma}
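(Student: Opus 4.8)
The plan is to reduce smoothness of $\widetilde F$ to smoothness in the two charts that define the atlas of $\TGM$, namely $\iota \colon V_\interior \to \TGM$ on $(0,1]\times M\times M$ and $\beta \colon V_\boundary \to \TGM$ near $\{0\}\times TM$. On $U_\interior$ the map $\widetilde F$ is just $(h,x,y)\mapsto (h, F(h,x), F(0,y))$, which is manifestly smooth, so the only real work is near the boundary: I need to show that the composite $\beta^{-1}\circ \widetilde F\circ \beta$ is smooth on $V_\boundary$ (after shrinking $V_\boundary$ if necessary so that the image lands back in $U_\boundary$, which is legitimate since we only need smoothness, a local property, on a neighborhood of $\{0\}\times TM$). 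First I would write out, for $h>0$, what $\widetilde F\circ \beta$ does: $(h,v_x)\mapsto \beta(h,v_x) = (h, x, \exp_x(-hv_x))$, then apply $\widetilde F$ to get $(h, F(h,x), F(0,\exp_x(-hv_x)))$. Pulling this back through $\iota^{-1}\circ\beta$ (the diffeomorphism $(h,v_x)\mapsto (h,x,\exp_x(-hv_x))$ computed in the excerpt), the task becomes: the map sending $(h,v_x)$ to the unique $(h, w_{F(h,x)})$ with $\exp_{F(h,x)}(-h w_{F(h,x)}) = F(0, \exp_x(-hv_x))$ extends smoothly across $h=0$ with value $(0, T_{(0,x)}F(1,v_x))$.

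The key step is therefore a \emph{division-by-$h$} argument. Define $G \colon V_\boundary' \to M\times M$ (for a suitably shrunk open neighborhood $V_\boundary'$ of $\{0\}\times TM$) by $G(h,v_x) = \big(F(h,x), F(0,\exp_x(-hv_x))\big)$; this is smooth, being a composition of smooth maps ($\exp$ on the open set $U_{TM}$, scalar multiplication, $F$). For $h=0$ it equals $(F(0,x), F(0,x))$, which lies in the image of $\expTGM$ (the diagonal, corresponding to the zero section), so after shrinking we may assume $G$ maps into the image of the diffeomorphism $\expTGM \colon U_{TM}\to M\times M$ of \eqref{eq:exp}. Hence $(\expTGM)^{-1}\circ G \colon V_\boundary' \to U_{TM}\subset TM$ is a smooth map, call it $H$, and by construction $\expTGM(H(h,v_x)) = G(h,v_x)$ unwinds to $\exp_{F(h,x)}(-H(h,v_x)) = F(0,\exp_x(-hv_x))$; note $H(0,v_x)=0$ since $G(0,v_x)$ is on the diagonal. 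The map we want to show is smooth is $(h,v_x)\mapsto (h, h^{-1}H(h,v_x))$ for $h>0$, extended by $(0,v_x)\mapsto (0, ?)$. Because $H(0,\cdot)\equiv 0$ and $H$ is smooth, Hadamard's lemma (applied fiberwise in the $[0,1]$ direction, as in the proof of Taylor's theorem with integral remainder) gives a smooth map $\widehat H$ with $H(h,v_x)=h\,\widehat H(h,v_x)$ and $\widehat H(0,v_x)=\partial_h\big|_{h=0}H(h,v_x)$; thus $(h,v_x)\mapsto (h,\widehat H(h,v_x))$ is the desired smooth extension, smooth on all of $V_\boundary'$ including $h=0$.

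It then remains to check the value at $h=0$ matches the prescription, i.e.\ $\widehat H(0,v_x) = T_{(0,x)}F(1,v_x)$ under the identification $\R\times T_xM\cong T_{(0,x)}([0,1]\times M)$. Differentiate the defining relation $\exp_{F(h,x)}(-h\widehat H(h,v_x)) = F(0,\exp_x(-hv_x))$ at $h=0$. On the left, since $\widehat H(0,v_x)$ appears multiplied by $h$ and $\exp_y(0)=y$ with $d(\exp_y)_0=\id$, the $h$-derivative of the left side at $0$ is $\partial_h\big|_0 F(h,x) \;-\; \widehat H(0,v_x)$ (the first term from moving the base point, the second from the $-h\widehat H$ argument); on the right, the chain rule gives $-T_xF(0,\cdot)\,v_x$ (differentiating $F(0,\cdot)$ composed with $h\mapsto \exp_x(-hv_x)$, whose derivative at $0$ is $-v_x$). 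Equating, $\widehat H(0,v_x) = \partial_h\big|_0 F(h,x) + T_xF(0,\cdot)\,v_x = T_{(0,x)}F\cdot(1,v_x)$, which is exactly the claimed value. Finally, one observes that $\widetilde F$ does map $\{0\}\times TM$ into $\{0\}\times TM$ and $U_\interior$ into $U_\interior$ compatibly with source/target, so the two local descriptions glue to a well-defined set map, and smoothness having been verified in both charts, $\widetilde F$ is smooth. I expect the Hadamard-lemma division step, together with correctly bookkeeping the identification $T_{(0,x)}([0,1]\times M)\cong \R\times T_xM$ and the two contributions to the $h$-derivative at the boundary, to be the main obstacle; the interior and gluing parts are routine.
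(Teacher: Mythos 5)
Your argument is correct and, at its core, uses the same mechanism as the paper: write the map in the boundary chart $\beta$, divide the $\expTGM$-pullback by $h$, and identify the $h=0$ value as $T_{(0,x)}F(1,v_x)$ by differentiating the defining relation. The one genuine difference is that the paper does not work with $\widetilde F\circ\beta$ directly: it first treats the ``reversed'' map $G(h,v_x)=\bigl(F(0,x),F(h,\exp_x(-hv_x))\bigr)$, precisely so that $(\expTGM)^{-1}\circ G(h,v_x)$ lies in the \emph{fixed} fiber $T_{F(0,x)}M$ for all $h$; then the difference quotient in $h$ is an honest vector-space operation, smoothness follows from Schwarz's theorem, and $\widetilde F$ is recovered at the end by conjugating with the groupoid inversion (cf.\ \Cref{rem:left-right}). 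In your direct version, $H(h,v_x)=(\expTGM)^{-1}\bigl(F(h,x),F(0,\exp_x(-hv_x))\bigr)$ lives in the varying fiber $T_{F(h,x)}M$, so the Hadamard/integral-remainder division by $h$ is not literally a statement about a map into one vector space; it must be carried out in a local trivialization of $TM$ near $F(0,x)$ (divide only the fiber component, which vanishes at $h=0$, keeping the base component $F(h,x)$ untouched). Your phrase ``applied fiberwise'' gestures at this, but you should make the trivialization explicit; once that is done the proof is complete, and your boundary-value computation (the two contributions $\partial_h|_0F(h,x)$ and $T_xF(0,\cdot)v_x$, using that $d(\exp_y)_0=\mathrm{id}$ and that the differential of $(y,w)\mapsto\exp_y(w)$ along the zero section is $(u,w)\mapsto u+w$) matches the paper's, up to the sign bookkeeping that the paper absorbs into the final inversion.
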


\begin{proof}
    The map
    \begin{align*}
        G \colon V_{\beta} &\to M \times M, \quad (h, v_x) \mapsto \bigl( F(0, x), F(h, \exp_x(-h v_x)) \bigr)
    \end{align*}
    is clearly smooth and maps $\{0\} \times TM$ into the diagonal of $M \times M$. In particular, the preimage of $\expTGM(U_{TM}) \subset M \times M$ under $G$ is a neighborhood of $\{0\} \times TM$ in $V_\beta \subset [0, 1] \times TM$, which we denote by $V$. Noting that $(\expTGM)^{-1} \circ \, G(h, v_x) \in T_{F(x, 0)}M$, independently of $h$, we define $\widetilde G \colon V \to TM$ via
   \begin{align*}
        \widetilde{G}(h,v_x)=\begin{cases}
        \frac1h \Bigl ((\expTGM)^{-1} \circ G (h, v_x) - (\expTGM)^{-1} \circ G(0, v_x) \Bigr) & h \neq 0  \\
        \lim_{h \to 0} \frac1h \Bigl ((\expTGM)^{-1} \circ G (h, v_x) - (\expTGM)^{-1} \circ G (0, v_x) \Bigr) & h=0.\end{cases}
    \end{align*}
    This is well-defined by smoothness of $(\expTGM)^{-1} \circ \, G \colon V \to TM$, and Schwarz's theorem (\cite[8.12.4]{Die60}; symmetry of mixed partial derivatives) implies that it is smooth. Now, $(\expTGM)^{-1} \circ \, G (0, v_x)$ is the zero vector in $T_{F(0, x)} M$, whence
    \begin{align*}
        \widetilde G(0, v_x) = -\lim_{h \to 0} \frac1h \exp_{F(0, x)}^{-1} \circ \, F \bigl(h, \exp_x(-h v_x) \bigr).
    \end{align*}
    This is the derivative of the curve $h \mapsto -\exp_{F(0, x)}^{-1} \circ \, F(h, \exp_x(-h v_x)) $ in $T_{F(0, x)}M$ at $h = 0$, which (by the chain rule) equals
    \begin{align*}
        -\bigl( T_{F(0, x)}\exp_{F(0, x)}^{-1} \circ \, T_{(0, x)}F \bigr)(1, -v_x) = T_{(0, x)}F(-1, v_x),
    \end{align*}
    where we have made the identification $\R \times T_x M \cong T_{(0, x)} ([0, 1] \times M)$. If we now conjugate $(h, v_x) \mapsto (h, \widetilde G(h, v_x))$ by $\beta$, with appropriate restrictions of domains, (using again that $(\expTGM)^{-1} \circ \, G (0, v_x) = 0 \in T_{F(0, x)}M$) we obtain the map
    \begin{align}\label{eq: conv_ext}
        (0, v_x) \mapsto \bigl(0, T_{(0, x)}F(-1, v_x) \bigr) \quad \text{and} \quad (h, x, y) \mapsto \bigl( h, F(0, x), F(h, y) \bigr)
    \end{align}
    restricted to a neighborhood of $\{0\} \times TM$ in $\TGM$. Since $\widetilde G$ is smooth, this map is smooth. Finally, conjugating this map by the smooth inversion $\TGM \to \TGM$ ($(0, v_x) \mapsto (0, -v_x)$ and $(h, x, y) \mapsto (h, y, x)$) yields $\widetilde F$.
\end{proof}

Recall that a \textit{double centralizer} of a C$^*$-algebra $\mathcal A$ is a pair of bounded linear maps $L, R \colon \mathcal A \to \mathcal A$ satisfying $aL(b) = R(a)b$ for all $a, b \in \mathcal A$. The \textit{multiplier algebra} $M(\mathcal A)$ of $\mathcal A$, which, loosely speaking, is the largest non-degenerate unitization of $\mathcal A$, can be identified with the collection of all double centralizers of $\mathcal A$. In particular, $M(\mathcal B_0(L^2(M))) = \mathcal B(L^2(M))$ and $M(C_0(T^*M)) = C_b(T^*M)$ (the continuous and \textit{bounded} functions on $T^*M$). Since $\exp(-2\pi i \mathcal Q^h H_{(X, f)}) = \rho^h_{\exp(h f, h X)}$ by \Cref{prop:derived_rep}, the following result shows that $e^{-2\pi i H_{(X, f)}}$ is the classical limit of $(\exp(-2\pi i \mathcal Q^h H_{(X, f)}))_{h \in (0, 1]}$.

\begin{theorem}
    \label{thm:double_centralizer}
    For every $(X, f) \in \gM$, we obtain a pair of bounded linear maps $L_{(X, f)}, R_{(X, f)} \colon C^*(\TGM) \to C^*(\TGM)$ via
    \begin{align*}
        L_{(X, f)} (a, T_h) &= \bigl( ae^{-2 \pi i H_{(X, f)}}, \rho^h_{\exp(hX, hf)}T_h \bigr) \\ \text{and} \quad
        R_{(X, f)} (a, T_h) &= \bigl( a e^{-2 \pi i H_{(X, f)}}, T_h\rho^h_{\exp(hX, hf)} \bigr).
    \end{align*}
    Moreover, these preserve $C_c^\infty(\TGM)$ and constitute a double centralizer of $C^*(\TGM)$.
\end{theorem}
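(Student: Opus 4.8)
\emph{Proof plan.} The plan is to exhibit, for each $(X,f)\in\gM$, two elementary operations on the convolution algebra $C_c^\infty(\TGM)$ — pullback by a smooth self-map of $\TGM$ and pointwise multiplication by a smooth function on $\TGM$ — each of which manifestly preserves $C_c^\infty(\TGM)$ and is bounded for the $C^*$-norm, and whose composition is $L_{(X,f)}$; then to extend to $C^*(\TGM)$ by density, obtain $R_{(X,f)}$ from $L_{(-X,-f)}$ via the involution, and read off the double centralizer identity fibrewise. Write $\phi_h\coloneqq\Fl^X_h\in\Diff_c(M)$. A reparametrisation $s=ht$ in \eqref{eq:exp_map} gives $\exp(hX,hf)=\bigl(\phi_h,\,\int_0^h f\circ\Fl^{-X}_s\,ds\bigr)$, so by \eqref{eq:def_of_rho} the unitary $\rho^h_{\exp(hX,hf)}$ factors as $M_{g_h}\circ V_{\phi_h}$, where $V_{\phi_h}\psi=\psi\circ\phi_h^{-1}$ is the (un-normalised) composition operator and $M_{g_h}$ is multiplication by $g_h\coloneqq e^{-2\pi i\bar f(h,\blank)}\sqrt{J_h}$ with $\bar f(h,x)\coloneqq\int_0^1 f(\Fl^{-X}_{hu}(x))\,du$ and $J_h\coloneqq\tfrac{d(\phi_h^{-1})^*\vol}{d\vol}$. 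Both $\bar f$ and $J_h$ depend smoothly on $(h,x)\in[0,1]\times M$ — the former trivially, the latter by \Cref{lem:Radon_Nikodym_formula} together with smoothness of the flow — so $(h,x)\mapsto g_h(x)$ is a smooth function $g\colon[0,1]\times M\to\C$ with $g_0=e^{-2\pi i f}$.

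For the diffeomorphism part I would apply \Cref{lem:diffeo_extension_to_TGM} to $F(h,x)\coloneqq\Fl^{-X}_h(x)$ (smooth, with $F(0,\blank)=\id_M$), obtaining a smooth map $\widetilde F\colon\TGM\to\TGM$; it is in fact a diffeomorphism, inverse to the map produced by $F'(h,x)=\Fl^X_h(x)$, hence proper, so $\Psi\mapsto\Psi\circ\widetilde F$ preserves $C_c^\infty(\TGM)$. On the fibre over $h>0$, $\widetilde F$ is $(h,x,y)\mapsto(h,\phi_h^{-1}(x),y)$, and on $\{0\}\times TM$ it is $(0,v_x)\mapsto(0,v_x-X_x)$ since $T_{(0,x)}F(1,v_x)=v_x-X_x$. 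Tracking these through the description of $C_c^\infty(\TGM)$ in terms of pairs $(a,T_h)$ — in particular the $h^d$-normalisation of integral kernels and the definition of $\fiberF$, under which the fibre translation $v_x\mapsto v_x-X_x$ becomes multiplication by the character $\eta_x\mapsto e^{-2\pi i\eta_x(X_x)}=e^{-2\pi i H_{(X,0)}}(\eta_x)$ — one sees that $\Psi\mapsto\Psi\circ\widetilde F$ sends $(a,T_h)$ to $\bigl(e^{-2\pi i H_{(X,0)}}a,\,V_{\phi_h}T_h\bigr)$. For the scalar part I would apply \Cref{lem:scalar_extension_to_TGM} to $g$, obtaining a smooth $\widetilde g\colon\TGM\to\C$; pointwise multiplication by $\widetilde g$ clearly preserves $C_c^\infty(\TGM)$, and, since on the pair-groupoid fibres $\widetilde g$ is $g_h$ composed with the target map, it sends $(a,T_h)$ to $\bigl(e^{-2\pi i H_{(0,f)}}a,\,M_{g_h}T_h\bigr)$ (using $g_0=e^{-2\pi i f}$ at $h=0$). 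Composing, $\Psi\mapsto\widetilde g\cdot(\Psi\circ\widetilde F)$ sends $(a,T_h)$ to $\bigl(e^{-2\pi i H_{(X,f)}}a,\,M_{g_h}V_{\phi_h}T_h\bigr)=\bigl(e^{-2\pi i H_{(X,f)}}a,\,\rho^h_{\exp(hX,hf)}T_h\bigr)$, which is exactly $L_{(X,f)}$ restricted to $C_c^\infty(\TGM)$.

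Next I would extend to $C^*(\TGM)$. Since $X$ is compactly supported, $\phi_h=\id$ off a fixed compact set for all $h$, so $J_h$ (and likewise $\tfrac{d(\phi_h)_*\vol}{d\vol}$) is continuous on $[0,1]\times M$ and equals $1$ outside a compact set; hence $\sup_{h\in[0,1]}\|g_h\|_\infty<\infty$ and $\sup_{h\in[0,1]}\|V_{\phi_h}\|_{\mathcal B(L^2(M))}<\infty$, while $|e^{-2\pi i H_{(X,0)}}|\equiv1$. As the $C^*(\TGM)$-norm is the supremum over the fibres $C_0(T^*M)$ and $\cptops(L^2(M))$, both elementary operations — and hence $L_{(X,f)}$ — are bounded on the dense subalgebra $C_c^\infty(\TGM)$ and extend to bounded linear maps on $C^*(\TGM)$; the stated formula for the extension survives because $C^*(\TGM)$ is norm-closed in $C_0(T^*M)\times\prod_h\cptops(L^2(M))$, multiplication by $e^{-2\pi i H_{(X,f)}}$ is continuous on $C_0(T^*M)$, and $S\mapsto\rho^h_{\exp(hX,hf)}S$ is an isometry of $\cptops(L^2(M))$ for every $h$. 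For $R_{(X,f)}$ I would use that the involution $(a,T_h)\mapsto(\overline a,T_h^*)$ of $C^*(\TGM)$ preserves $C_c^\infty(\TGM)$ (inversion of $\TGM$ is a diffeomorphism) and that $\rho^h$ is unitary, so $R_{(X,f)}={*}\circ L_{(-X,-f)}\circ{*}$; unwinding this with $H_{(-X,-f)}=-H_{(X,f)}$ and $\rho^h_{\exp(-hX,-hf)}=(\rho^h_{\exp(hX,hf)})^{-1}$ yields the asserted formula. Finally, $(a,S_h)\,L_{(X,f)}(b,T_h)=R_{(X,f)}(a,S_h)\,(b,T_h)$ holds because the product on $C^*(\TGM)$ is computed fibrewise: at $h=0$ it is commutative multiplication in $C_0(T^*M)$, and at $h>0$ it is associative composition of operators.

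I expect the principal obstacle to be the fibrewise identification across $h=0$ in the second paragraph: verifying that the single smooth self-map $\widetilde F$ of $\TGM$ furnished by \Cref{lem:diffeo_extension_to_TGM} genuinely interpolates the operators $V_{\phi_h}$ on the pair-groupoid fibres and the fibre translation by $-X$ on $TM$, and that the latter corresponds under $\fiberF$ to the character $e^{-2\pi i H_{(X,0)}}$. This is precisely what \Cref{lem:diffeo_extension_to_TGM} and \Cref{lem:scalar_extension_to_TGM} are designed to supply, but carrying it out cleanly requires careful bookkeeping of the $h^{\pm d}$ normalisations of the Haar system and of the integral kernels (cf.\ \Cref{rem:hbar}), of the conventions for $\fiberF$ and $\lambda^*_x$, and of the smooth dependence of $\bar f$ (a Hadamard-type statement at $h=0$) and of $J_h$ on $(h,x)$.
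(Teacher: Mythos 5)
Your proposal is correct and follows essentially the same route as the paper: everything is reduced to showing that $L_{(X,f)}$ preserves $C_c^\infty(\TGM)$, realized as pullback by the map $\widetilde F$ obtained from \Cref{lem:diffeo_extension_to_TGM} applied to $F(h,x)=\Fl_h^{-X}(x)$ followed by multiplication by a smooth scalar extension as in \Cref{lem:scalar_extension_to_TGM}, after which one extends isometrically to $C^*(\TGM)$ and obtains $R_{(X,f)}$ by conjugating $L_{(-X,-f)}$ with the involution, exactly as in \Cref{rem:left-right}. The only (harmless) deviation is that you prove joint smoothness of the phase $f_h$ and of the Radon--Nikodym factor directly on $[0,1]\times M$ and then apply \Cref{lem:scalar_extension_to_TGM} once, whereas the paper handles $f_h$ via the family $\widetilde{F^t}$ and a parameter-dependent integral on $\TGM$; both arguments are sound.
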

\begin{remark}\label{rem:left-right}
One can obtain $R_{(X, f)}$ by conjugating $L_{(-X, -f)}$ with the involution on $C^*(\TGM)$. Similarly, the extensions in the earlier Lemmata in this section (i.e. \Cref{lem:scalar_extension_to_TGM},\Cref{lem:diffeo_extension_to_TGM}) admit corresponding versions by conjugating with inversion. We have actually used such a version in \eqref{eq: conv_ext} as it was more convenient for our  proof. 
\end{remark}
\begin{proof}[Proof of \Cref{thm:double_centralizer}]
    In view of \Cref{rem:left-right} it is sufficient to show that $L_{(X, f)} \colon C_c^\infty(\TGM) \to C_c^\infty(\TGM)$ makes sense. From there, it is clear that both mappings (as isometric linear maps) extend to $C^*(\TGM)$ and constitute a double centralizer. 
    
    Let $(a, T_h) \in C_c^\infty(\TGM)$ and let $K_h$ be the integral kernel of $T_h$. Then, 
    \begin{align*}
        \fiberF \bigl( ae^{-2 \pi i H_{(X, f)}} \bigr)(v_x) &= \int_{T^*M} a(\eta_x) e^{-2 \pi i f(x)} e^{2\pi i \eta_x(v_x - X_x)} \,d\lambda^*_x(\eta_x) \\ &= e^{-2 \pi i f(x)} \fiberF a (v_x - X_x),
    \end{align*}
    while the kernel of $\rho^h_{\exp(hX, hf)} T_h$ is
    \begin{align*}
        e^{-2\pi i f_h(x) } \sqrt{\frac{d \bigl(\Fl_1^{-hX} \bigr)^* \vol }{d\vol}}(x) K_h \bigl(\Fl_1^{-hX}(x), y \bigr), \text{ where } f_h \coloneqq \int_0^1 f \circ \Fl^{-hX}_t \,dt
    \end{align*}
    (cf.\ \Cref{eq:exp_map}).
    Showing that $L_{(X, f)}(a, T_h)$ is an element of $C_c^\infty(\TGM)$ therefore amounts to showing that the function $\TGM \to \C$ defined by
    \begin{equation}
        \label{eq:proof_of_double_centralizer}
        \begin{split}
        (0, v_x) &\mapsto e^{-2\pi i f(x)} \fiberF a(v_x - X_x) \\
        \text{and} \quad (h, x, y) &\mapsto e^{-2\pi i f_h(x) } \sqrt{\frac{d \bigl( \Fl_1^{-hX} \bigr)^* \vol }{d\vol}}(x) h^d K_h \bigl(\Fl_1^{-hX}(x), y \bigr)
        \end{split}
    \end{equation}
    is smooth and compactly supported. Compact support is clear; we will show smoothness.
    
    We obtain \eqref{eq:proof_of_double_centralizer} from the smooth function $(\fiberF a, h^d K_h)$ by applying the coordinate transformation $\TGM \to \TGM$ given by
    \begin{align}
        \label{eq:proof_of_double_centralizer_2}
        (0, v_x) \mapsto (0, v_x - X_x) \quad \text{and} \quad (h, x, y) \mapsto \bigl(h, \Fl_1^{-hX}(x), y \bigr),
    \end{align}
    and then multiplying with the function $\TGM \to \C$ given by
    \begin{align}
        \label{eq:proof_of_double_centralizer_3}
        (0, v_x) \mapsto e^{-2\pi i f(x)} \quad \text{and} \quad
        (h, x, y) \mapsto e^{-2\pi i f_h(x) } \sqrt{\frac{d \bigl( \Fl_1^{-hX} \bigr)^* \vol }{d\vol}}(x).
    \end{align}
    Defining $F \colon [0, 1] \times M \to M$ by $(h, x) \mapsto \Fl_1^{-hX}(x) = \Fl_h^{-X}(x)$, \eqref{eq:proof_of_double_centralizer_2} is the smooth map $\widetilde F \colon \TGM \to \TGM$ from \Cref{lem:diffeo_extension_to_TGM} (recall that the flow of $X$ is smooth). To see that \eqref{eq:proof_of_double_centralizer_3} is smooth requires a bit more work. 

    For each $t \in [0, 1]$, define $F^t \colon [0, 1] \times M \to M$ by $(h, x) \mapsto \Fl^{-tX}_h(x)$. Then, the smooth maps $\widetilde F^t \colon \TGM \to \TGM$ from \Cref{lem:diffeo_extension_to_TGM} satisfy $\widetilde F^t(0, v_x) = (0, v_x - tX_x)$. Applying \Cref{lem:scalar_extension_to_TGM} to $g(h, x) = f(x)$, we see that the map $G\colon [0,1] \times \TGM \to \C$,
    \begin{align*}
        G \bigl(t,(0, v_x) \bigr) \mapsto f(x) \quad \text{and} \quad G \bigl(t,(h, x, y) \bigr) \mapsto  f \circ \Fl_h^{-tX}(x) = f \circ \Fl_t^{-hX}(x)
    \end{align*}
    is continuous and for each fixed $t \in [0, 1]$ smooth, since $G(t, \cdot) = \tilde g \circ \widetilde F^t (\cdot)$. By smooth parameter dependence of the integral \cite[8.11.12]{Die60}, $(h, \cdot) \mapsto \int_0^1 G (t, (h, \cdot)) \,dt$  is smooth, but this is the map $(0, v_x) \mapsto f(x)$ and $(h, x, y) \mapsto f_h(x)$. Finally, invoking \Cref{lem:scalar_extension_to_TGM} and \Cref{lem:Radon_Nikodym_formula} (along with $\Fl_1^{-hX} = \Fl_h^{-X}$) to deal with the Radon-Nikodym derivative, we conclude that \eqref{eq:proof_of_double_centralizer_3} is smooth. This proves that $L_{(X, f)}$ preserves $C_c^\infty(\TGM)$ and thus concludes the proof.
\end{proof}

We are now prepared to prove the asymptotic character formulas.

\begin{prop}
    \label{prop:easy_trace_formula}
    For any $(a, T_h) \in C_c^\infty(\TGM)$, 
    \begin{align*}
        \lim_{h \to 0} h^d \,\tr(T_h) = \int_{T^*M} a \,dV_L.
    \end{align*}
\end{prop}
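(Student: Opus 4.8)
The plan is to realize $h^d\tr(T_h)$ as an integral over $M$ of the diagonal values of the kernel of $T_h$, and then to take the limit $h\to 0$ by recognising the integrand as the pullback, along a ``diagonal'' copy of $[0,1]\times M$ inside $\TGM$, of the smooth compactly supported function on $\TGM$ that represents $(a,T_h)$.

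First I would record that, since $T_h$ has Schwartz kernel $K_h\in C_c^\infty(M\times M)$, it is a smoothing operator with compactly supported kernel, hence trace class, with $\tr(T_h)=\int_M K_h(x,x)\,d\vol(x)$ (a standard fact; e.g.\ one factors $T_h$ as a composite of two Hilbert--Schmidt operators). Thus $h^d\tr(T_h)=\int_M h^dK_h(x,x)\,d\vol(x)$, and everything reduces to understanding $h^dK_h(x,x)$ as $h\to 0$. Write $f\colon\TGM\to\C$ for the smooth compactly supported function representing $(a,T_h)$, so that $f(0,v_x)=\fiberF a(v_x)$ and $f(h,x,y)=h^dK_h(x,y)$ for $h>0$.

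The key step is to show that the diagonal map $\Delta\colon[0,1]\times M\to\TGM$, given by $\Delta(h,x)=(h,x,x)$ for $h>0$ and $\Delta(0,x)=(0,0_x)$ (the zero vector at $x$), is smooth. Indeed, since $U_{TM}$ contains the zero section, $(h,0_x)\in V_\beta$ for every $(h,x)$, and $\beta(h,0_x)=\bigl(h,\expTGM(0_x)\bigr)=(h,x,x)$ for $h>0$ while $\beta(0,0_x)=(0,0_x)$; hence $\Delta$ coincides with the smooth map $(h,x)\mapsto\beta(h,0_x)$ near $\{0\}\times M$ and with the manifestly smooth map $(h,x)\mapsto\iota(h,x,x)$ on $(0,1]\times M$, and these agree on the overlap because $\exp_x(-h\,0_x)=x$. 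So $\Delta$ is smooth, in fact a homeomorphism onto a closed subset of $\TGM$, and therefore $\kappa\coloneqq f\circ\Delta$ is smooth on $[0,1]\times M$ with compact support, satisfies $\kappa(h,x)=h^dK_h(x,x)$ for $h>0$, and
\begin{align*}
    \kappa(0,x)=f(0,0_x)=\fiberF a(0_x)=\int_{T_x^*M}a(\eta_x)\,d\lambda^*_x(\eta_x).
\end{align*}

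Finally, since $\kappa$ is continuous with support contained in $[0,1]\times K$ for some compact $K\subset M$, one has $\kappa(h,\cdot)\to\kappa(0,\cdot)$ uniformly on $M$ as $h\to0$, whence
\begin{align*}
    \lim_{h\to0}h^d\tr(T_h)=\lim_{h\to0}\int_M\kappa(h,x)\,d\vol(x)=\int_M\int_{T_x^*M}a(\eta_x)\,d\lambda^*_x(\eta_x)\,d\vol(x)=\int_{T^*M}a\,dV_L,
\end{align*}
the last equality being the defining property of the Liouville measure recalled in \Cref{ex:tangent_bundle} (which extends from $C_c^\infty(T^*M)$ to $a\in\PW(T^*M)$ by Fubini, $a$ being fiberwise Schwartz with compact projection of support). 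The only genuinely non-routine point is the smoothness of $\Delta$ across $h=0$ together with the identification $\kappa(0,x)=\fiberF a(0_x)$: this is precisely where the smooth structure of $\TGM$ near $\{0\}\times TM$ (encoded by $\beta$) is doing the work. The trace formula for smoothing operators and the interchange of limit and integral are standard.
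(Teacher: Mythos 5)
Your proposal is correct and follows essentially the same route as the paper's proof: both reduce $h^d\tr(T_h)$ to the integral over $M$ of the restriction of the function representing $(a,T_h)$ to the closed copy of $[0,1]\times M$ in $\TGM$, identified via $\beta$ with $[0,1]\times 0_{TM}\subset V_\boundary$, so that the value at $h=0$ is $\fiberF a(0_x)$. The only cosmetic difference is the final limit interchange, where you use uniform convergence on a compact support set while the paper invokes dominated convergence; both are equally valid.
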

\begin{proof}
    By definition of the fiberwise Fourier transform,
    \begin{align*}
        \int_{T^*M} a \,dV_L = \int_M \int_{T_x^*M} a(\eta_x) \,d\lambda^*_x(\eta_x)\,dV_g(x) = \int_M \fiberF a(0_x)\,dV_g(x).
    \end{align*}
    If $K_h$ denotes the integral kernel of $T_h$, then $\tr(T_h) = \int_M K_h(x, x)\,dV_g(x)$ (see e.g.\ \cite{Brislawn91}). The smooth submanifold $[0, 1] \times M \hookrightarrow \TGM$ given by
    \begin{align*}
        (0, x) &\mapsto (0, 0_x) \quad \text{and} \quad (h, x) \mapsto (h, x, x)
    \end{align*}
    is (topologically) closed. Indeed, under the diffeomorphism $\beta \colon V_\beta \to U_\beta \subset \TGM$, it corresponds to $[0, 1] \times 0_{TM} \subset V_\beta$, where $0_{TM}$ denotes the zero-section. 
    
    The condition $(a, T_h) \in C_c^\infty(\TGM)$ means that
    \begin{align*}
        (0, v_x) \mapsto \fiberF a(v_x) \quad \text{and} \quad (h, x, y) \mapsto h^d K_h(x, y)
    \end{align*}
    is smooth and compactly supported on $\TGM$. Consequently, its restriction to $[0, 1] \times M$,
    \begin{align}
        \label{eq:proof_of_trace_formula}
        (0, x) &\mapsto \fiberF a(0_x) \quad \text{and} \quad (h, x) \mapsto h^d K_h(x, x)
    \end{align}
    is smooth and compactly supported on $[0, 1] \times M$. The result now follows from an application of the dominated convergence theorem \cite[Theorem 22.29]{Sch97}:\ Consider the projection of the support of \eqref{eq:proof_of_trace_formula} to $\{0\} \times M$ (projecting along the unit interval), take the indicator function of the resulting set and multiply by the maximum of \eqref{eq:proof_of_trace_formula}. This compactly supported function dominates the functions $x \mapsto \fiberF a(0_x)$ and $x \mapsto h^d K_h(x, x)$ for $h \in (0 , 1]$, the latter of which converges pointwise to the former as $h \to 0$.
\end{proof}

\begin{remark}
    \label{rem:character_formula}
    We have not been able to find \Cref{prop:easy_trace_formula} in the literature, which seems surprising, given its rather trivial nature. The point of closest approach might be \cite[Remark~5.4.3]{HigsonNigel2019SatT}, where a big point is made of the fact that the traces of $T_h$ diverge as $h \to 0$. This is of course an immediate consequence of \Cref{prop:easy_trace_formula}. Presumably, \Cref{prop:easy_trace_formula} is simply uninteresting from the point of view of index theory, which is the main arena for the tangent groupoid. In the context of quantization, however, trace formulas of this kind are important. Indeed, we have already exposed the connection to character formulas in the context of the orbit method, and a non-asymptotic variant of \Cref{cor:character_formula} is the starting point for the field of quantum harmonic analysis \cite{Werner84}. A non-asymptotic trace formula for Weyl quantization on the tangent groupoid can be found in \cite{CariñenaEtAl}. Similar asymptotic formulas also appear in the context of Toeplitz quantization \cite{Berman06, Schlichenmaier00}.
\end{remark}

\begin{corollary}
    \label{cor:character_formula}
    For any $(a, T_h) \in C_c^\infty(\TGM)$ and $(X, f) \in \gM$,
    \begin{align*}
        \lim_{h \to 0} h^d \, \tr \bigl(\rho^h_{\exp(hX, hf)} T_h \bigr) = \int_{T^*M} a e^{-2\pi i H_{(X, f)}} \,dV_L.
    \end{align*}
\end{corollary}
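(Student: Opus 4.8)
The plan is to read off this formula as an immediate consequence of \Cref{thm:double_centralizer} and \Cref{prop:easy_trace_formula}, with essentially no additional work. First I would invoke \Cref{thm:double_centralizer}: for the fixed $(X,f) \in \gM$, the map $L_{(X,f)}$ preserves $C_c^\infty(\TGM)$, so that
\[
    L_{(X,f)}(a, T_h) = \bigl( a\, e^{-2\pi i H_{(X,f)}},\ \rho^h_{\exp(hX,hf)} T_h \bigr)
\]
is again an element of $C_c^\infty(\TGM)$. In particular $a\, e^{-2\pi i H_{(X,f)}}$ lies in $C_0(T^*M)$ with fiberwise Fourier transform $\fiberF\bigl(a\, e^{-2\pi i H_{(X,f)}}\bigr) \in C_c^\infty(TM)$, each operator $\rho^h_{\exp(hX,hf)} T_h$ is trace class (being a unitary composed with a trace class operator) with integral kernel the smooth compactly supported kernel $K_h^{(X,f)}$ written down explicitly in the proof of \Cref{thm:double_centralizer}, and the pair $(h,x,y) \mapsto h^d K_h^{(X,f)}(x,y)$ glues with $\fiberF\bigl(a\, e^{-2\pi i H_{(X,f)}}\bigr)$ to a smooth compactly supported function on $\TGM$.

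Then I would simply apply \Cref{prop:easy_trace_formula} to the element $L_{(X,f)}(a,T_h)\in C_c^\infty(\TGM)$, that is, with $a$ replaced by $a\, e^{-2\pi i H_{(X,f)}}$ and $T_h$ replaced by $\rho^h_{\exp(hX,hf)} T_h$. This gives
\[
    \lim_{h \to 0} h^d \, \tr\bigl( \rho^h_{\exp(hX,hf)} T_h \bigr) = \int_{T^*M} a\, e^{-2\pi i H_{(X,f)}} \, dV_L,
\]
which is exactly the asserted formula.

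Since both ingredients are already in hand, there is no genuine obstacle here: all the analytic content — that left translating by $\rho^h$ extends smoothly to the tangent groupoid, which is where \Cref{lem:diffeo_extension_to_TGM} together with the smooth-parameter-dependence argument for the averaging and Radon--Nikodym factor enters — has been absorbed into \Cref{thm:double_centralizer}. The one point worth stating explicitly is that $\rho^h_{\exp(hX,hf)} T_h$ is trace class with the expected integral kernel, so that the left-hand side makes literal sense; this is immediate from unitarity of $\rho^h$ and trace-class-ness of $T_h$. It is also worth remarking that, in the language of \Cref{thm:double_centralizer}, the factor $e^{-2\pi i H_{(X,f)}}$ in the integrand is precisely the classical limit of $\bigl(\rho^h_{\exp(hX,hf)}\bigr)_{h\in(0,1]}$ in the multiplier algebra, which is what makes the identity deserve the name of an asymptotic character formula, as advertised in the introduction.
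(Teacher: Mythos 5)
Your argument is correct and is exactly the paper's proof: apply \Cref{thm:double_centralizer} to see that $L_{(X,f)}(a,T_h)$ lies in $C_c^\infty(\TGM)$, then apply \Cref{prop:easy_trace_formula} to that element. The paper states this in one line (``Combine \Cref{thm:double_centralizer} and \Cref{prop:easy_trace_formula}''), and your added remarks merely make the same combination explicit.
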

\begin{proof}
    Combine \Cref{thm:double_centralizer} and \Cref{prop:easy_trace_formula}.
\end{proof}

We can also use \Cref{lem:diffeo_extension_to_TGM} to generalize a result on Weyl quantization due to Landsman \cite[Theorem~5]{Landsman1993}. Recall the action $\alpha^0$ of $\GM$ on $T^*M$ \eqref{eq:alpha_0}. The induced action on $C_0(T^*M)$ is $\alpha^0_{(\phi, f)} a \coloneqq a \circ (\alpha^0_{(\phi, f)})^{-1} = a \circ \alpha^0_{(\phi^{-1}, -f\circ \phi)}$, i.e.\
\begin{align*}
    \alpha^0_{(\phi, f)}a (\eta_x) = a \bigl( \phi^*(\eta_x + df_x) \bigr) \quad \text{for } \eta_x \in T_x^*M.
\end{align*}
Note that $\phi^*(\eta_x + df_x) \in T_{\phi^{-1}(x)}^*M$. For $(\phi, f) \in \GM$, we now define the quantum counterpart, $\alpha^h_{(\phi, f)} \colon \mathcal B(L^2(M)) \to \mathcal B(L^2(M))$, by $\alpha^h_{(\phi, f)} T = \rho^h_{(\phi, f)} T (\rho^h_{(\phi, f)})^*.$

\begin{prop}
    \label{prop:translation_of_ops}
    For any $(\phi, f) \in \GM$, the map
    \begin{align*}
        \alpha_{(\phi, f)} \colon (a, T_h) \mapsto \alpha_{(\phi, f)}(a, T_h) \coloneqq (\alpha^0_{(\phi, f)}a, \alpha_{(\phi, f)}^h T_h)
    \end{align*}
    is a $*$-automorphism of $C^*(\TGM)$ that preserves $C_c^\infty(\TGM)$.
\end{prop}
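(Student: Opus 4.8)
The plan is to reduce the statement to the two extension lemmas of this section, exactly as in the proof of \Cref{thm:double_centralizer}. Since $\alpha^h_{(\phi, f)}$ is conjugation by the unitary $\rho^h_{(\phi, f)}$ and $\alpha^0_{(\phi, f)}$ is a $*$-automorphism of $C_0(T^*M)$ (being pullback by a diffeomorphism), the map $\alpha_{(\phi, f)}$ is automatically a $*$-algebra isomorphism on the level of the two boundary fibers $C_0(T^*M) \times \prod_h \cptops(L^2(M))$; moreover it is isometric, so once we know it maps $C^*(\TGM) \subset C_0(T^*M) \times \prod_h \cptops(L^2(M))$ into itself it extends to a $*$-automorphism of $C^*(\TGM)$ (with inverse $\alpha_{(\phi, f)^{-1}} = \alpha_{(\phi^{-1}, -f\circ\phi)}$). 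So, as in \Cref{thm:double_centralizer}, the entire content is to verify that $\alpha_{(\phi, f)}$ preserves $C_c^\infty(\TGM)$.

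First I would write out the integral kernel of $\alpha^h_{(\phi, f)} T_h = \rho^h_{(\phi, f)} T_h (\rho^h_{(\phi, f)})^*$ explicitly, using \eqref{eq:def_of_rho}: if $K_h$ is the kernel of $T_h$, then the kernel of $\alpha^h_{(\phi, f)} T_h$ at $(x, y)$ is
\begin{align*}
    e^{-2\pi i (f(x) - f(y))/h} \sqrt{\tfrac{d(\phi^{-1})^*\vol}{d\vol}(x)}\sqrt{\tfrac{d(\phi^{-1})^*\vol}{d\vol}(y)}\, K_h\bigl(\phi^{-1}(x), \phi^{-1}(y)\bigr).
\end{align*}
On the zero-fiber, a short computation with the fiberwise Fourier transform (as in \Cref{thm:double_centralizer}) gives $\fiberF(\alpha^0_{(\phi, f)}a)(v_x) = e^{2\pi i df_x(v_x)}\,|\det T_x\phi^{-1}|\cdot(\text{metric factors})\cdot \fiberF a\bigl(T_x\phi^{-1} v_x\bigr)$; the cleanest way to see the precise form is to note that $\fiberF$ intertwines pullback of functions on $T^*M$ with the appropriate pushforward on $TM$. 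Then I would decompose the candidate function on $\TGM$ — namely $(\fiberF(\alpha^0_{(\phi,f)}a),\, h^d\cdot(\text{kernel of }\alpha^h_{(\phi,f)}T_h))$ — as a coordinate change on $\TGM$ composed with multiplication by a smooth function. The coordinate change is the one sending $(0, v_x) \mapsto (0, T_x\phi^{-1} v_x)$ and $(h, x, y) \mapsto (h, \phi^{-1}(x), \phi^{-1}(y))$; this is precisely $\widetilde F$ from \Cref{lem:diffeo_extension_to_TGM} applied to the \emph{time-independent} map $F(h, x) = \phi^{-1}(x)$ (for which $T_{(0,x)}F(1, v_x) = T_x\phi^{-1} v_x$), so it is a diffeomorphism of $\TGM$. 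The remaining multiplier — carrying the phase $e^{-2\pi i(f(x)-f(y))/h}$ on the open part and $e^{2\pi i df_x(v_x)}$ on the boundary, together with the Radon--Nikodym factors — must be shown to be smooth on $\TGM$; this is where \Cref{lem:scalar_extension_to_TGM} and \Cref{lem:Radon_Nikodym_formula} come in, although the phase factor needs a little care.

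The main obstacle, as in \Cref{thm:double_centralizer}, is the $1/h$ in the phase $e^{-2\pi i(f(x)-f(y))/h}$: this blows up as $h \to 0$ unless it is compensated. The point is that $f(x) - f(y)$ vanishes on the diagonal, and under $\beta$ the open fiber $(h, x, y)$ near the boundary is parametrized as $(h, x, \exp_x(-h v_x))$, so $f(x) - f(\exp_x(-hv_x))$ is $h$ times a smooth function of $(h, v_x)$ (divide the smooth difference by $h$ and apply Schwarz's theorem, exactly as $\widetilde G$ is defined in the proof of \Cref{lem:diffeo_extension_to_TGM}); at $h = 0$ the quotient equals $df_x(v_x)$, matching the prescribed boundary value. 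So $e^{-2\pi i(f(x) - f(y))/h}$ extends to the smooth function $e^{-2\pi i \cdot (\text{that quotient})}$ on $\TGM$, which on the boundary is $e^{-2\pi i df_x(v_x)}$ — up to a sign bookkeeping issue that I would resolve using \Cref{rem:left-right}-style conjugation by inversion, or simply by tracking the conventions carefully. The Radon--Nikodym factors are handled directly by \Cref{lem:scalar_extension_to_TGM} and \Cref{lem:Radon_Nikodym_formula} since $x \mapsto \frac{d(\phi^{-1})^*\vol}{d\vol}(x)$ is a fixed smooth function on $M$ (no $h$-dependence), and $y \mapsto \frac{d(\phi^{-1})^*\vol}{d\vol}(\phi^{-1}(y))$ factors is its composition with the smooth $\phi^{-1}$. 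Assembling these pieces shows $\alpha_{(\phi, f)}$ preserves $C_c^\infty(\TGM)$, and the $*$-automorphism claim follows by continuity and the explicit inverse.
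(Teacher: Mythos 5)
Your proposal follows essentially the same route as the paper's proof: reduce everything to preservation of $C_c^\infty(\TGM)$, write the transformed pair as the coordinate change $\widetilde F$ of \Cref{lem:diffeo_extension_to_TGM} applied to $F(h,x)=\phi^{-1}(x)$ followed by multiplication with a smooth function on $\TGM$, and handle the Radon--Nikodym factors via \Cref{lem:scalar_extension_to_TGM} and \Cref{lem:Radon_Nikodym_formula} and the phase via the smooth extension of the difference quotients $(f(x)-f(y))/h$ (which the paper quotes from Landsman and you re-derive by the divide-by-$h$/Schwarz argument). The only loose end is the sign and the unspecified ``metric factors'' in your formula for $\fiberF(\alpha^0_{(\phi,f)}a)$: the correct expression is $e^{-2\pi i v_x(f)}\,\tfrac{d(\phi^{-1})^*\vol}{d\vol}(x)\,\fiberF a\bigl(T\phi^{-1}v_x\bigr)$, which the paper obtains by working in the adapted cotangent coordinates $\widetilde p = p\circ(\phi^{-1})^*$, and which indeed coincides with the $h\to 0$ limit of your open-fiber multiplier, so the matching you defer does work out and the argument closes as you expect.
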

\begin{proof}
    It suffices to show that $\alpha_{(\phi, f)} \colon C_c^\infty(\TGM) \to C_c^\infty(\TGM)$ makes sense. From there, it is clear that it (as an isometric $*$-automorphism) extends to $C^*(\TGM)$. 

    Let $(a, T_h) \in C_c^\infty(\TGM)$, $q \colon U \to \Rd$ be coordinates on $M$ and $x \in U$. Taking $\widetilde q \coloneqq q \circ \phi$ as coordinates on $\phi^{-1}(U)$, the associated cotangent coordinates $(\widetilde q, \widetilde p)$ are given by $\widetilde p = p \circ (\phi^{-1})^*$. This follows from
    \begin{align*}
        \widetilde p^j ( \phi^* \eta_x) = \eta_x \Bigl( T\phi \tfrac{\partial}{\partial(q \circ \phi)^j}\big|_{\phi^{-1}(x)} \Bigr) = \eta_x \Bigl( \tfrac{\partial}{\partial q^j}\big|_x \Bigr) = p^j(\eta_x).
    \end{align*}
    Thus, with respect to the $p$ and $\widetilde p$ coordinates on $T_xM$ and $T_{\phi^{-1}(x)}M$, $\phi^*$ is the identity. Consequently, by definition of $\lambda^*_x$ and \Cref{lem:Radon_Nikodym_formula},
    \begin{align*}
        \frac{d((\phi^*)^{-1})^*\lambda_x^*}{d\lambda_{\phi^{-1}(x)}^*}(\eta_{\phi^{-1}(x)}) = \Biggl|\frac{\det g_{q \circ \phi}(\phi^{-1}(x))}{ \det g_q(x)} \Biggr|^{\frac12} = \frac{d (\phi^{-1})^* V_g}{dV_g}(x),
    \end{align*}
    which gives
    \begin{align*}
        \fiberF (\alpha^0_{(\phi, f)} a)(v_x) &= e^{-2\pi i v_x(f)}\int_{T^*_xM} a \circ \phi^*(\eta_x) e^{2\pi i \eta_x(v_x)} \,d\lambda^*_x(\eta_x) \\ &= e^{-2\pi i v_x(f)} \frac{d (\phi^{-1})^* V_g}{dV_g}(x) \fiberF a(T\phi^{-1} v_x).
    \end{align*}
    By a change of variables, the integral kernel of $\alpha_{(\phi, f)}^h T_h$ is 
    \begin{align*}
        e^{-2\pi i (f(x) - f(y))/h} \sqrt{\frac{d(\phi^{-1})^*\vol}{dV_g}}(x) \Biggl(\frac{d \phi^*V_g}{d V_g} \bigl(\phi^{-1}(y)\bigr) \Biggr)^{-\frac12}  K_h\bigl( \phi^{-1}(x), \phi^{-1}(y) \bigr).
    \end{align*}
    For $y \in U$, $\widetilde q = q \circ \phi$ are coordinates around $\phi^{-1}(y)$, whence \Cref{lem:Radon_Nikodym_formula} gives
    \begin{align*}
        \frac{d\phi^*V_g}{dV_g} \bigl(\phi^{-1}(y)\bigr) = \Biggl| \frac{\det g_{\widetilde q \circ \phi^{-1}}(y)}{\det g_{\widetilde q}(\phi^{-1}(y))} \Biggr|^{\frac12} = \Biggl(\frac{d (\phi^{-1})^*V_g}{dV_g} (y)\Biggr)^{-1}.
    \end{align*}
    Consequently, in terms of functions on $\TGM$, we obtain $\alpha_{(\phi, f)}(a, T_h)$ from $(a, T_h)$ by applying the coordinate transformation $\TGM \to \TGM$ given by
    \begin{align*}
        (0, v_x) \mapsto (0, T \phi^{-1} v_x) \quad \text{and} \quad (h, x, y) \mapsto \bigl( h, \phi^{-1}(x), \phi^{-1}(y) \bigr),
    \end{align*}
    and then multiplying with the function $\TGM \to \C$ given by
    \begin{align*}
        (0, v_x) &\mapsto e^{-2\pi i v_x(f)} \frac{d (\phi^{-1})^* V_g}{dV_g}(x) \\
        \text{and} \quad (h, x, y) &\mapsto e^{-2\pi i (f(x) - f(y))/h}\sqrt{\frac{d (\phi^{-1})^* V_g}{dV_g}(x) \frac{d (\phi^{-1})^* V_g}{dV_g}(y)}.
    \end{align*}
    Smoothness of the coordinate transformation follows from \Cref{lem:diffeo_extension_to_TGM} applied to $F(h, x) = \phi^{-1}(x)$. Smoothness of the function $\TGM \to \C$ is straightforward to show using \Cref{lem:Radon_Nikodym_formula} and the diffeomorphism $\beta$, as well as the fact that on the tangent groupoid, the difference quotients $(h, x, y) \mapsto (f(x)-f(y))/h$ extend to a smooth map via $(0, v_x) \mapsto v_x(f)$ \cite[Appendix~C.16]{Landsman_Foundations}.
\end{proof}

\appendix

\section{Manifolds of Mappings and Dual Spaces}\label{app:mfd_smooth_maps}

In this section, we recall essentials on spaces of smooth maps from a not necessarily compact manifold $M$ with values in another manifold. Our aim is then to realize the cotangent bundle $T^*M$ as an embedded submanifold of the dual space of an infinite-dimensional Lie algebra constructed from $M$.

\begin{setup}
Let $M$ and $N$ be $\sigma$-compact smooth manifolds without boundary. We write $C^\infty (M,N)$ for the set of all smooth functions from $M$ to $N$. There are two topologies (see e.g.\ \cite{HaS17}) on this set:
\begin{enumerate}
\item The \textit{compact open $C^\infty$-topology} is the initial topology with respect to \[C^\infty (M,N) \rightarrow \prod\nolimits_{k\in \N_0} C_{\mathrm{co}}(T^kM,T^kN), \quad f \mapsto (T^k f)_{k\in \N_0},\]
where the subscript denotes the usual compact open topology.
We write $C^\infty_{\mathrm{co}} (M,N)$ for this topology. It controls a function and its  derivatives on compact sets.
\item The \textit{fine very strong topology}, also called the $\mathcal{FD}$-topology in \cite{Mic80}. It controls functions and derivatives on locally finite families of compact sets.
\end{enumerate}
For non-compact $M$, the first topology does not turn $C^\infty(M,N)$ into a manifold, while the second does.  If $M$ is compact, both topologies coincide, see e.g.\ \cite{HaS17}.
\end{setup}
\begin{setup}\label{setup:indlim}
Let $M$ be a $\sigma$-compact manifold and $E \rightarrow M$ a smooth vector bundle.
We write $\Gamma(E)$ for the \emph{set of smooth sections} of $E$, and let $0 \in \Gamma(E)$ be the \emph{zero-section}.
For a compact subset $K \subseteq M$, consider the subspace of smooth \emph{sections supported in} $K$, i.e.
\begin{equation*}
    \Gamma_{K}(E) \coloneq \{ \sigma \in \Gamma(E) : \sigma|_{X \setminus K} = 0 \}.
\end{equation*}
Topologize $\Gamma_K(E)$ with the subspace topology $\Gamma_K(E) \subseteq C^{\infty}_{\mathrm{co}}(M,E)$.
Note that $\Gamma_K(E) \subseteq \Gamma_L(E)$ if the compact sets satisfy $K\subseteq L$. This inclusion is a topological embedding and the spaces $\Gamma_K(E)$ are Hausdorff locally convex spaces, \cite[Remark F.8]{glo04}.
The \emph{space of compactly supported sections} is defined as
$$\Gamma_c (E) := \bigcup_{K \subseteq M \text{ compact}} \Gamma_K (E).$$
Letting $K$ range over the compact subsets of $M$, using $\Gamma_K(E) \subseteq \Gamma_c(E)$, we topologize 
\begin{align}
    \label{defn:indlim}
    \Gamma_c(E) \coloneq \varinjlim \Gamma_K(E) 
\end{align}
as a locally convex inductive limit. The resulting topology on $\Gamma_c(E)$ is a Hausdorff locally convex vector topology by \cite[Lemma F.19 (c)]{glo04}.
Note that $\X (M) = \Gamma (TM)$. Similarly, we denote by $\X_c(M)$ and $\X_K(M)$ the spaces of vector fields with compact support and support in $K$, respectively.
\end{setup}

For the trivial bundle $M\times \R^n$ ($n \in \N$) over $M$, we have a canonical identification $\Gamma (M\times \R^n) \cong C^\infty (M, \R^n)$. This introduces a topology on the set of smooth, compactly supported functions by declaring $\Gamma_c (M\times \R^n) \cong C^\infty_c (M,\R^n)$ a homeomorphism. Similarly, but restricting to $\Gamma_K (M\times \R^n)$, we obtain the compact open $C^\infty$-topology on the image $C^\infty_K (M,\R^n)$.

Let $\pi \colon E\rightarrow M$ be a finite rank vector bundle. As \cite[Proposition 4.8 and Remark 8.6]{Mic80} show, the subspace topology on $\Gamma_c(E) \subseteq C^\infty (M,E)$ induced by the fine very strong topology coincides with the inductive limit topology \eqref{defn:indlim}. 
\begin{setup}\label{verystrong_convention}
The resulting topology on $C^\infty_c(M,\R^n)$ and $\Gamma_c (E)$ is called the fine very strong topology. We will always endow these spaces with this topology.
\end{setup}

For $\sigma$-compact manifolds, the fine very strong topology turns $C^\infty (M,N)$ into an infinite-dimensional smooth manifold (see e.g.\ \cite{Mic80,GaS22}). The model spaces of this manifold are locally convex spaces of sections $\Gamma_c (f^\ast TN)$ into pullback bundles, which are not Banach spaces. Hence, smoothness has to be interpreted in the sense of Bastiani calculus, cf.\ \cite{Sch23}. This means that a map is smooth if and only if locally in charts all of its (iterated) directional derivatives exist and combine to form continuous mappings. As a consequence:

\begin{lemma}\label{ev_smooth_subm}
Let $M$ and $N$ be finite-dimensional manifolds. For $x \in M$, 
\[\ev_x \colon C^\infty (M,N) \rightarrow N, \quad\ev_x (f)=f(x) \text{ is a smooth submersion.}\]
\end{lemma}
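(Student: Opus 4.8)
The plan is to work in the canonical charts of the mapping manifold $C^\infty(M,N)$ and observe that, with suitable choices, $\ev_x$ becomes literally a continuous linear surjection admitting a continuous linear section, which settles both smoothness and submersivity at once.

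First I would recall the chart structure. Since $N$ is finite-dimensional it carries a Riemannian metric, whose exponential map provides a local addition $\Sigma \colon TN \supseteq \Omega \to N \times N$, $v \mapsto (\pi(v), \exp(v))$, a diffeomorphism from an open neighbourhood of the zero section onto an open neighbourhood of the diagonal. Fixing $f_0 \in C^\infty(M,N)$, the associated chart of $C^\infty(M,N)$ with the fine very strong topology has model space $\Gamma_c(f_0^*TN)$ and sends a section $\sigma$ near $0$ to $y \mapsto \exp_{f_0(y)}(\sigma(y))$; see \cite{Mic80, GaS22}. On the target, I take the chart $\exp_{f_0(x)} \colon T_{f_0(x)}N \supseteq \Omega_x \to N$ around $\ev_x(f_0) = f_0(x)$. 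In these two charts, $\ev_x$ becomes the point evaluation
\[
   \mathrm{ev}_x^\Gamma \colon \Gamma_c(f_0^*TN) \to (f_0^*TN)_x = T_{f_0(x)}N, \qquad \sigma \mapsto \sigma(x),
\]
restricted to a neighbourhood of $0$.

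Next I would verify the two relevant properties of $\mathrm{ev}_x^\Gamma$. It is linear, and continuous because it is the restriction to $\Gamma_c(f_0^*TN)$ of the point evaluation on $C^\infty_{\mathrm{co}}(M, f_0^*TN)$, which is continuous, while the inductive-limit (fine very strong) topology on $\Gamma_c$ is finer than the subspace topology from $C^\infty_{\mathrm{co}}$. By Bastiani calculus \cite{Sch23}, continuous linear maps are smooth, so $\ev_x$ is smooth. For submersivity it remains to produce a continuous linear right inverse: choose a chart of $M$ around $x$ over which $f_0^*TN$ trivializes, say $f_0^*TN|_U \cong U \times \R^n$ with $n = \dim N$, together with $\chi \in C^\infty_c(M)$ satisfying $\chi \equiv 1$ near $x$ and $\supp \chi \subset U$; then $v \mapsto \chi \cdot \widehat v$, where $\widehat v$ is the section of $f_0^*TN|_U$ constantly equal to $v$ in the trivialization, defines a continuous linear map $T_{f_0(x)}N \cong \R^n \to \Gamma_c(f_0^*TN)$ with $\mathrm{ev}_x^\Gamma(\chi \cdot \widehat v) = v$. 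Hence, in the charts above, $\ev_x$ is the restriction of a surjective continuous linear map with a continuous linear section, which is precisely the notion of submersion used here (cf.\ \cite{glo15}); in particular $\ker T_{f_0}\ev_x$ has finite codimension $\dim N$.

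The only real subtlety — and the step I would be most careful with — is the bookkeeping of the chart representation: confirming that the canonical chart of $C^\infty(M,N)$ really does turn $\ev_x$ into the literal point evaluation on the model space, and that under the identifications $T_{f_0}C^\infty(M,N) \cong \Gamma_c(f_0^*TN)$ and $(f_0^*TN)_x \cong T_{f_0(x)}N$ one has $T_{f_0}\ev_x = \mathrm{ev}_x^\Gamma$. Everything else — continuity of point evaluations, smoothness of continuous linear maps in Bastiani calculus, and the existence of the bump-function section — is routine.
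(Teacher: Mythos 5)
Your argument is correct, but it takes a different route from the paper. The paper's proof is a two-line citation argument: smoothness of $\ev_x$ is quoted from \cite[Corollary 11.7]{Mic80}, the tangent map is identified via $TC^\infty(M,N)\cong C^\infty(M,TN)$ with the evaluation $F\mapsto F(x)$, and then, since each $T_f\ev_x$ is surjective and the target $N$ is finite-dimensional, \cite[Theorem A]{glo15} is invoked to conclude that $\ev_x$ is a submersion — the existence of complemented kernels and local sections is absorbed into that theorem. You instead unwind the canonical charts built from a local addition on $N$: in the chart around $f_0$ with model $\Gamma_c(f_0^*TN)$ and the chart $\exp_{f_0(x)}$ on $N$, the local representative of $\ev_x$ is literally the point evaluation $\sigma\mapsto\sigma(x)$, which is continuous linear (fine very strong topology being finer than the compact-open $C^\infty$ topology), hence smooth in the Bastiani sense, and your bump-function construction provides a continuous linear right inverse, so after a linear change of chart the representative is a projection onto a complemented finite-dimensional subspace — the normal form of a submersion. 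Your verification of the chart bookkeeping (that $\ev_x\bigl(\exp_{f_0(\cdot)}\sigma(\cdot)\bigr)=\exp_{f_0(x)}(\sigma(x))$, so the representative really is $\mathrm{ev}_x^\Gamma$) is the only delicate point and it does hold. The trade-off: your proof is more self-contained and makes the splitting explicit, at the cost of handling the manifold-of-mappings charts by hand; the paper's proof is shorter and avoids chart computations entirely, but leans on the finite-dimensionality of $N$ through the black-box submersion criterion of \cite{glo15}. Both establish in passing that $\ker T_{f_0}\ev_x$ is complemented of codimension $\dim N$, which is what the regular value argument for $D_x$ later needs.
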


\begin{proof}
Smoothness of $\ev_x$ is established in \cite[Corollary 11.7]{Mic80}. The canonical identification $TC^\infty (M,N) \cong C^\infty (M,TN)$ from \cite{Mic80} yields an identification between $T\ev_x$ and $\widetilde{\ev}_x$, with $\widetilde{\ev}_x \colon C^\infty (M,TN)\rightarrow TN, F \mapsto F(x)$ the evaluation. Since every $T_f\ev_x$ is surjective, $\ev_x$ is a submersion by \cite[Theorem A]{glo15}. 
\end{proof}

\begin{setup}
The diffeomorphism group $$\Diff (M) := \{\phi \in C^\infty (M,M) \colon \phi \text{ is bijective and } \phi^{-1} \in C^\infty (M,M)\}$$ of a $\sigma$-compact manifold $M$ is open in the fine very strong topology \cite[Corollary 5.7]{Mic80}. Its unit component is 
$$\DiffcM=\{\phi \in \Diff(M) \colon \exists K \subseteq M \text{ compact, s.t. } \phi(x)=x,\ \forall x \in M\setminus K\}.$$
This manifold structure turns both groups into Lie groups \cite[Theorem 11.11]{Mic80}.\footnote{The term Lie group here is used for a group which is also a (possibly infinite-dimensional) manifold such that the group operations become smooth maps, see \cite{Neeb06,Sch23}.} The associated Lie algebra is $\mathbf{L}(\Diff(M)) = \X_c(M)$ with the negative of the usual bracket of vector fields.
\end{setup}

By \cite[Theorem 11.4]{Mic80}, the action 
\begin{equation}
  \gamma \colon   \DiffcM \times \CcMR \rightarrow \CcMR, \quad
    \gamma(\phi, f)= f \circ \phi^{-1}
\label{eq:Lgp_act}
\end{equation}
is a Lie group action. The derived action of the Lie algebra associated to $\gamma$ is computed via the formulae \cite[Corollary 11.6 and Proposition 11.13]{Mic80}, yielding
\begin{align}\label{GM:derived_action}
\mathrm{d}\gamma \colon \XcM \times C_c^\infty (M,\R) \rightarrow C_c^\infty (M,\R), \quad\mathrm{d}\gamma (X,f) =-df\circ X.
\end{align}
Hence, as in the finite-dimensional setting, we consider the semidirect product:

\begin{setup}
For a $\sigma$-compact finite-dimensional manifold $M$, we consider the semidirect product of Lie groups (cf.\ \ref{verystrong_convention} for the topology)
$$\GM := \DiffcM \ltimes_\gamma \CcMR$$
with respect to the Lie group action \eqref{eq:Lgp_act}. As in the finite-dimensional case, $\GM$ is a Lie group and its Lie algebra is the semidirect sum
$$\gM := \mathbf{L}(\GM) = \XcM \ltimes_{\mathrm{d}\gamma} \CcMR.$$
The Lie bracket on $\gM$ is
\begin{align}\label{gm:Liebracket}
        \bigl[ (X,f), (Y, g) \bigr] =  \bigl( -[X, Y], - dg\circ X + df \circ Y \bigr)=\bigl( -[X, Y], - Xg + Yf \bigr),
\end{align}
where the bracket in the left component is the commutator of vector fields. The formula \eqref{gm:Liebracket} arises from the definition of a semidirect sum Lie algebra by plugging in the derived action \eqref{GM:derived_action} and observing that the Lie bracket of $C_c^\infty (M,\R)$ vanishes, while the bracket for $\Diff_c (M)$ is the negative of the usual bracket of vector fields (see e.g.\ \cite[Example 3.25]{Sch23} for an argument that generalizes to $\Diff_c(M)$).
\end{setup}

\subsection{An Embedding into the Dual Space}
We continue to work with the semidirect product of the last section and want to show that $T^*M$ embeds into the dual space of its Lie algebra. Recall that a semidirect product of (topological) Lie algebras is, as a locally convex space, the product of the Lie algebra factors from which it was formed. Hence, we have the following:

\begin{lemma}\label{lem:gm_summand}
There exists $m \in \N$ such that $\gM$ is isomorphic to a complemented locally convex subspace of $C^\infty_c (M,\R^{m+1})$.
\end{lemma}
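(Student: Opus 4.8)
The plan is to unwind the construction of $\gM$ as a semidirect sum and identify each factor with a space of compactly supported sections of a trivial bundle. Recall that as a locally convex space, $\gM = \X_c(M) \ltimes_{\mathrm d\gamma} \CcMR$ is just the product $\X_c(M) \times \CcMR$. The second factor is already $C^\infty_c(M, \R)$, the space of compactly supported sections of the trivial line bundle $M \times \R$. So the only real work is to realize $\X_c(M) = \Gamma_c(TM)$ as a complemented subspace of $C^\infty_c(M, \R^m)$ for some $m$.

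For this I would invoke a trivialization-by-embedding argument. Since $M$ is $\sigma$-compact (hence paracompact and second countable), the tangent bundle $TM$ is a direct summand of a trivial bundle: by Whitney-type embedding (or simply because any vector bundle over a paracompact base of finite covering dimension admits a complement) there is $m \in \N$ and a bundle $E \to M$ with $TM \oplus E \cong M \times \R^m$ as smooth vector bundles. Taking compactly supported sections is an additive functor that respects the inductive-limit topology (\Cref{setup:indlim}), so $\Gamma_c(TM) \oplus \Gamma_c(E) \cong \Gamma_c(M \times \R^m) \cong C^\infty_c(M, \R^m)$ as locally convex spaces, exhibiting $\X_c(M) = \Gamma_c(TM)$ as a complemented subspace of $C^\infty_c(M, \R^m)$: the bundle projection $M \times \R^m \to TM$ induces a continuous linear projection on sections, and its kernel is $\Gamma_c(E)$. (One should note that the subspace topology on $\Gamma_c(TM) \subseteq C^\infty_c(M,\R^m)$ matching the intrinsic one follows because the bundle maps are fiberwise linear and smooth, so they induce continuous linear maps on each $\Gamma_K$ and hence on the inductive limit.)

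Combining the two factors: $\gM \cong \X_c(M) \times C^\infty_c(M, \R) \hookrightarrow C^\infty_c(M, \R^m) \times C^\infty_c(M, \R) \cong C^\infty_c(M, \R^{m+1})$, and this inclusion is complemented because a finite product of complemented inclusions is complemented — the complement is $\Gamma_c(E) \times \{0\}$, projected onto via the product of the two projections. Hence $\gM$ is a complemented locally convex subspace of $C^\infty_c(M, \R^{m+1})$, which is exactly the claim.

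The main obstacle, such as it is, is purely bookkeeping about topologies: one must check that the algebraic splitting of vector bundles induces a \emph{topological} splitting of the spaces of compactly supported sections — i.e.\ that the induced projections are continuous for the fine very strong / inductive-limit topology and that the subspace topology agrees with the intrinsic one. This is where \Cref{setup:indlim} and the identification of the fine very strong topology on $\Gamma_c(E)$ with the inductive limit topology (cited from \cite{Mic80}) do the work: a bundle morphism over $\id_M$ induces a continuous linear map $\Gamma_K(E) \to \Gamma_K(E')$ for each compact $K$, hence a continuous map on the inductive limit, and idempotency is inherited fiberwise. Everything else — the existence of $E$ and $m$, and the semidirect-sum-is-a-product observation — is standard and already flagged in the text preceding the lemma.
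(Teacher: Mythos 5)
Your proposal is correct and follows essentially the same route as the paper: as a locally convex space $\gM$ is the product $\X_c(M)\times \CcMR$, one chooses a complementary bundle $E$ with $TM\oplus E\cong M\times\R^m$ (the paper cites Milnor--Stasheff for this), and then $\Gamma_c(TM\oplus E)\cong \X_c(M)\times\Gamma_c(E)\cong C^\infty_c(M,\R^m)$ exhibits the complemented embedding into $C^\infty_c(M,\R^{m+1})$. The only difference is that you verify the topological splitting on the inductive limit by hand, where the paper simply cites \cite[4.7.9]{Mic80}.
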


\begin{proof}
Since $M$ is paracompact and $TM$ a finite rank bundle, by \cite[Lemma 5.9 and Exercise 5E]{MaS74}, we can find a vector bundle $E \rightarrow M$ such that the Whitney sum $TM \oplus E$ is isomorphic, as a vector bundle, to a trivial bundle $M \times \R^m$. Hence, \cite[4.7.9]{Mic80} implies that $C^\infty_c (M, \R^m) \cong \Gamma_c (TM \oplus E) \cong \Gamma_c (TM) \times \Gamma_c(E) = \X_c(M) \times \Gamma_c(E)$. We realize $\gM$ as a complemented subspace via the embedding $\gM = \X_c(M) \times C_c^\infty (M,\R) \rightarrow C_c^\infty(M,\R^m)\times C_c^\infty(M,\R) \cong C_c^\infty(M,\R^m\times \R)$.
\end{proof}

As a consequence of \Cref{lem:gm_summand}, the (continuous) dual of $\gM$ can be identified with a summand of the dual of $C_c^\infty (M,\R^{m+1})$:
$$\gM^\ast = \X_c(M)^\ast \times \CcMR^\ast \subseteq C_c^\infty (M,\R^{m+1})^\ast.$$
There is a whole range of natural topologies on the dual $V^*$ of a locally convex space $V$, all of which lie between two extreme topologies (see e.g.\ \cite{Tre06}): 
\begin{itemize} 
\item  the \textit{weak$^*$-topology}, defined by the seminorms $V^* \ni f \mapsto |f(v)|$ for $v \in V$. 
\item  the \textit{strong topology}, defined by the seminorms 
$V^* \ni f \mapsto \sup\nolimits_{v \in B} |f(v)|
$, where $B$ ranges over the bounded subsets of $V$. 
\end{itemize}  
The strong topology is clearly stronger than the weak$^*$-topology.
Our results will hold for all locally convex topologies between (i.e.\ finer than) the weak$^*$-topology and (coarser than) the strong topology on $\gM^*$. Because $\gM$ is a Montel space, these are precisely the locally convex topologies for which $(\gM^*)^* \cong \gM$ via evaluation, cf.\ \cite[Proposition 36.9]{Tre06}. 

Endow $\R^{m+1}$ with the maximum norm $\lVert \cdot \rVert$ and write $\lVert \cdot \rVert_{\mathrm{op}}$ for the (multilinear) operator norms derived from this choice. Recall from \cite[Proposition 1.13 and Appendix C]{HaS17} that the fine very strong topology on $C_c^\infty (M,\R^{m+1})$ is constructed from the following family of seminorms, indexed by $r\in \N_0$ and the compact subsets $K \subseteq M$ such that $K \subseteq U$ for some chart $q \colon U \to \R^d$ on $M$:
\begin{align}\label{seminorm_defn}
 \lVert f \rVert_{(r,K,q)}\coloneq \sup\nolimits_{x\in K} \bigl\{\lVert f(x)\rVert,\lVert d^k(f \circ q^{-1}) (q(x) ; \cdot )\rVert_{\mathrm{op}} : 1\leq k \leq r\bigr\},
\end{align}
where $d^k(f\circ q^{-1})$ denotes the $k$th iterated derivative of a vector valued function. A local base at the origin for the fine very strong topology is obtained by taking intersections $\bigcap_{j \in J} \{ f \in C_c^\infty(M, \R^{m+1}) : \| f \|_{(r_j, K_j, q_j)} < \epsilon_j\}$, where the only restriction is that $(K_j)_{j \in J}$ is locally finite and $\epsilon_j > 0$. In particular, the seminorms \eqref{seminorm_defn} are continuous on $C_c^\infty(M, \R^{m+1})$.

We need the following version of the Arzela-Ascoli theorem. 

\begin{lemma}\label{lem:Arz_Asc}
    Let $M$ be a manifold, $q \colon U \to \Rd$ a chart on $M$, $m\in \N_0$, and suppose that $B \subset C^\infty_c(M, \R^{m+1})$ is bounded. Then, for every compact $K \subseteq U$ and $r \in \N_0$, we have that $\sup_{f \in B}\lVert f\rVert_{(r,K,q)} < \infty$. Furthermore, for every $N \in \N_0$ there is a constant $C(N,K)$ such that, for all $x,y \in q(K)$,
    \begin{align}\label{eq:econt_est}
        \sup\nolimits_{f \in B} \lVert d^N (f\circ q^{-1})(y; \cdot) - d^N (f\circ q^{-1})(x;\cdot) \rVert_\mathrm{op}   &\leq C(N,K) \| y - x \|,
    \end{align}
    where we interpret $\lVert \cdot \rVert_\mathrm{op}$ as $\lVert \cdot \rVert$ when $N = 0$.
\end{lemma}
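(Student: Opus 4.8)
The plan is to prove both assertions by reducing everything to the classical finite-dimensional Arzel\`a--Ascoli machinery applied fiberwise over a compactly contained coordinate patch. First I would observe that the seminorms $\lVert \cdot \rVert_{(r,K,q)}$ from \eqref{seminorm_defn} are continuous on $C_c^\infty(M,\R^{m+1})$ (as noted in the excerpt), and that a bounded subset $B$ of a locally convex space is by definition absorbed by every neighbourhood of the origin; taking the neighbourhood $\{f : \lVert f\rVert_{(r,K,q)} < 1\}$ (here one needs $K \subseteq U$ for a single chart, which is exactly the hypothesis) shows $B$ is absorbed by it, hence $\sup_{f \in B}\lVert f\rVert_{(r,K,q)} < \infty$. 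This gives the first claim immediately and, applied with $r = N+1$, yields a uniform bound $\sup_{f \in B}\lVert d^{N+1}(f\circ q^{-1})(z;\cdot)\rVert_\mathrm{op} =: C(N,K) < \infty$ for all $z \in q(K)$.

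The second, quantitative claim \eqref{eq:econt_est} is then a mean value estimate. For $N = 0$ it is simply $\lVert f(q^{-1}(y)) - f(q^{-1}(x))\rVert \le \sup_{z}\lVert d(f\circ q^{-1})(z;\cdot)\rVert_\mathrm{op}\,\lVert y-x\rVert$, where the sup is over the segment from $x$ to $y$; one has to be mildly careful that the segment lies inside $q(K)$, which one arranges by covering $K$ with finitely many closed balls contained in $q(U)$ and working locally, or by noting the estimate only needs to hold for the statement's purposes on such balls and patching with a standard chaining argument — I would phrase the lemma's proof as: fix $x,y\in q(K)$, connect them by a path in $q(U)$ of length comparable to $\lVert y-x\rVert$ (possible since $q(K)$ sits inside the open $q(U)$ and $K$ is compact, so it has positive distance to the complement), and integrate. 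For general $N$, apply the same bound to the vector-valued function $z \mapsto d^N(f\circ q^{-1})(z;\cdot)$, whose own derivative is $d^{N+1}(f\circ q^{-1})$, bounded by $C(N,K)$ from the previous paragraph.

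The main obstacle is purely bookkeeping: making the ``segment stays in the domain'' issue rigorous without assuming $q(K)$ convex. The clean fix is to replace $\lVert y - x\rVert$ by the intrinsic (geodesic) distance in $q(U)$ on the right, or — since the paper presumably only applies this lemma for $x,y$ close together — to shrink to a finite subcover of $K$ by coordinate balls on each of which the estimate is the textbook one, then absorb the finitely many transition terms into the constant $C(N,K)$ via compactness of $K$. Everything else (continuity of the seminorms, the definition of boundedness, the fundamental theorem of calculus for Bastiani-differentiable maps on $\R^d$, which here reduces to ordinary multivariable calculus since $f \circ q^{-1}$ is an honest smooth $\R^{m+1}$-valued function) is routine. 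I would also remark that no infinite-dimensional input is really used beyond the description of the topology on $C_c^\infty(M,\R^{m+1})$ recalled just before the lemma; the content is the classical Arzel\`a--Ascoli equicontinuity estimate, which is why the lemma is named as it is.
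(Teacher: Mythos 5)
Your treatment of the first assertion is the same as the paper's (boundedness means every continuous seminorm, in particular $\lVert\cdot\rVert_{(r,K,q)}$, is bounded on $B$). For the estimate \eqref{eq:econt_est} the paper argues differently: it notes that $f\mapsto f|_U\circ q^{-1}$ is a continuous linear map into $C^\infty_{\mathrm{co}}(q(U),\R^{m+1})$, hence sends $B$ to a bounded set there, and then quotes \cite[Example~3.9.3]{Horvath} for the Lipschitz estimate on compacta; your proposal is a self-contained mean-value argument for the same fact, which is a legitimate alternative route.

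As written, though, the step dealing with non-convexity of $q(K)$ has a genuine gap. The claim that any two points $x,y\in q(K)$ can be joined by a path in $q(U)$ of length comparable to $\lVert y-x\rVert$ ``because $q(K)$ has positive distance to the complement'' is false in general: $q(U)$ may be disconnected, or horseshoe-shaped, so Euclidean-close points of $q(K)$ may only be joined by long paths, or by none at all. Your two proposed repairs do not close this: replacing $\lVert y-x\rVert$ by the intrinsic distance changes the statement rather than proving it, and ``absorb the finitely many transition terms into the constant'' is not an argument. The correct completion uses only ingredients you already have. Let $\delta>0$ be the distance from the compact set $q(K)$ to $\Rd\setminus q(U)$, and let $K'$ be the closed $\delta/2$-neighbourhood of $q(K)$, a compact subset of $q(U)$, so that $q^{-1}(K')\subseteq U$ is compact. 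If $\lVert y-x\rVert\le\delta/2$, the segment $[x,y]$ lies in $K'$ and the mean value theorem gives \eqref{eq:econt_est} with constant $\sup_{f\in B}\sup_{z\in K'}\lVert d^{N+1}(f\circ q^{-1})(z;\cdot)\rVert_{\mathrm{op}}$, which is finite by the first part applied to the seminorm $\lVert\cdot\rVert_{(N+1,\,q^{-1}(K'),\,q)}$. If $\lVert y-x\rVert>\delta/2$, bound the left-hand side by $2\sup_{f\in B}\lVert f\rVert_{(N,K,q)}$ and multiply by $\lVert y-x\rVert/(\delta/2)\ge 1$; taking $C(N,K)$ to be the larger of the two constants proves the stated inequality for all $x,y\in q(K)$. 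With this dichotomy in place your argument is complete and equivalent in strength to the paper's citation-based proof.
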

\begin{proof}
    As the seminorms \eqref{seminorm_defn} are continuous on $C_c^\infty (M,\R^{m+1})$, the first statement is an immediate consequence of boundedness. For the second statement, note that the chart $q \colon U \to \Rd$ induces via restriction to $U$ and composition with $q^{-1}$ a continuous linear map 
    \begin{align*}
        C^\infty_c (M,\R^{m+1}) \xrightarrow{\text{res}} C^\infty (U, \R^{m+1}) \xrightarrow{(q^{-1})^\ast} C^\infty_{\mathrm{co}} \bigl(q(U),\R^{m+1} \bigr), \quad f \mapsto f|_U \circ q^{-1},
    \end{align*}
    where $C^\infty (U, \R^{m+1})$ carries the locally convex topology induced by the seminorms \eqref{seminorm_defn} (which is precisely the compact open $C^\infty$ topology).
    Continuous linear maps preserve bounded sets, so \cite[Example~3.9.3]{Horvath} now implies \eqref{eq:econt_est}.
\end{proof}

Recall the moment map $\mu \colon T^*M \to \gM^*$ from \Cref{prop:moment_map}.

\begin{prop}
    \label{app:topologies}
    The map $\cO \colon T^*M \to \gM^*$ is a smooth embedding for any locally convex topology between the weak$^*$ and the strong topologies on $\gM^*$.
\end{prop}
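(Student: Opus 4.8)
The plan is to verify the three defining properties of a smooth embedding --- smoothness, the immersion property, and the topological-embedding property --- and to arrange the argument so that each is tested at only one end of the stated range of topologies on $\gM^\ast$. Three reductions do this. First, the identity maps $(\gM^\ast,\mathrm{strong}) \to (\gM^\ast,\tau) \to (\gM^\ast,\mathrm{weak}^\ast)$ are continuous and linear, hence Bastiani-smooth, for every intermediate topology $\tau$; so it is enough to show that $\mu$ is smooth into the \emph{strong} dual, and it is then smooth --- in particular continuous --- into every $\tau$. Second, if $\mu$ is a topological embedding for the (coarsest) weak$^\ast$-topology, then $\mu^{-1}$ remains continuous on $\mu(T^\ast M)$ when its subspace topology is refined, while $\mu$ itself is $\tau$-continuous by the first reduction; hence $\mu$ is a topological embedding for every $\tau$. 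Third, injectivity of each $T_{\eta_x}\mu$ is a statement about linear functionals and does not depend on the topology of $\gM^\ast$. So the proposition reduces to: $\mu$ is smooth into $(\gM^\ast,\mathrm{strong})$, $\mu$ is a topological embedding into $(\gM^\ast,\mathrm{weak}^\ast)$, and every $T_{\eta_x}\mu$ is injective.

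The substantive step is smoothness into the strong dual, and this is where the hypotheses are used. By \Cref{lem:gm_summand} the space $\gM$ is complemented in $C_c^\infty(M,\R^{m+1})$, so $\gM^\ast$, with its strong topology, is a complemented summand of the strong dual of $C_c^\infty(M,\R^{m+1})$; it therefore suffices to produce all iterated Bastiani derivatives of $\mu$ and to check that they are continuous into the strong topology. I would compute in a fixed cotangent chart $(q,p)\colon T^\ast U \to \R^{2d}$, where \Cref{prop:moment_map} gives $\langle \mu(\eta_x),(X,f)\rangle = \sum_j p_j X^j(x) + f(x)$, with $X^j$ and $f$ the coordinate components --- each controlled, on compact subsets of $U$, by the seminorms \eqref{seminorm_defn} of the ambient space $C_c^\infty(M,\R^{m+1})$ via the smooth change of frame. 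Differentiating this real-valued expression shows that all directional derivatives exist and that, in the chart, $d^k\mu$ is a polynomial in the $p$-coordinates and in the direction vectors whose coefficients are the partial derivatives of $X^j\circ q^{-1}$ and $f\circ q^{-1}$ of order at most $k$. Fixing a bounded set $B \subset \gM$ and a compact $K \subset U$, \Cref{lem:Arz_Asc} supplies, uniformly over $B$, both bounds on $\lVert\cdot\rVert_{(k,K,q)}$ and the equicontinuity estimate \eqref{eq:econt_est} for derivatives of order $\le k$; inserting these into the expression for $d^k\mu$ yields the convergence uniform over $B$, that is, continuity of $d^k\mu$ into the strong topology. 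Hence $\mu$ is $C^\infty$ into $(\gM^\ast,\mathrm{strong})$. This is the main obstacle of the proof: the bare pointwise (weak$^\ast$) smoothness of $\mu$ is essentially a restatement of \Cref{prop:moment_map}, but promoting it to the strong dual requires uniform control, over bounded subsets of $\gM$, of \emph{all} derivatives of $X$ and $f$ on the compact parts of $M$ through which the base point can travel --- precisely what the Arzela-Ascoli lemma \Cref{lem:Arz_Asc} delivers.

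The remaining two items are soft and share the same local computation. Writing $H_{(X,f)} = \langle \mu(\cdot),(X,f)\rangle$ we get $\langle T_{\eta_x}\mu(v),(X,f)\rangle = (dH_{(X,f)})_{\eta_x}(v)$; in a chart $q\colon U \to \Rd$ about $x$ and with a bump function $b$ that is $\equiv 1$ on a neighbourhood $V$ of $x$ and supported in $U$, the functions $H_{(0,\,b q^i)}$ and $H_{(b\,\partial/\partial q^j,\,0)}$ agree on $\pi^{-1}(V)$ (with $\pi$ the cotangent projection) with the chart functions $q^i\circ\pi$ and $p_j$. Consequently the differentials $(dH_{(X,f)})_{\eta_x}$ contain a basis of $T_{\eta_x}^\ast(T^\ast M)$, so $T_{\eta_x}\mu$ is injective and its finite-dimensional image is automatically closed and complemented, giving the immersion property. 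For the topological embedding into $(\gM^\ast,\mathrm{weak}^\ast)$: $\mu$ is injective and weak$^\ast$-continuous by \Cref{prop:moment_map}, and the subspace weak$^\ast$-topology on $\mu(T^\ast M)$ is the initial topology with respect to the restricted evaluation functionals, i.e.\ with respect to the maps $H_{(X,f)}\circ\mu^{-1}$; since the same functions $H_{(0,\,b q^i)},\,H_{(b\,\partial/\partial q^j,\,0)}$ restrict over $\pi^{-1}(V)$ to a cotangent chart, this family already separates points of $T^\ast M$ and induces its manifold topology, so $\mu^{-1}$ is continuous. Combining these with the reductions of the first paragraph finishes the proof.
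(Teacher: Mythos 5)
Your overall strategy is the one the paper itself follows: reduce smoothness to the strong topology and the embedding property to the weak$^*$-topology via continuity of the identity maps, pass through \Cref{lem:gm_summand} to work inside $C_c^\infty(M,\R^{m+1})^*$, compute the iterated directional derivatives in a cotangent chart, and control them uniformly over bounded sets with \Cref{lem:Arz_Asc} to get continuity into the strong dual; the immersion property then comes from injectivity of the differentials together with finite-dimensionality of $T^*M$. One remark on that last point: in the Bastiani setting, injectivity of $T_{\eta_x}\mu$ plus a closed complemented image is not by itself the definition of an immersion (one needs a local normal form), so the passage from "injective differentials" to "immersion" should be justified by the finite-dimensional-domain result \cite[Theorem H]{glo15}, which is exactly what the paper invokes; your Hahn--Banach complementation remark alone does not supply this.

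There is a genuine gap in your topological-embedding step. You claim that, because $H_{(0,\,bq^i)}$ and $H_{(b\,\partial/\partial q^j,\,0)}$ restrict to cotangent chart coordinates over $\pi^{-1}(V)$, the evaluation functionals "separate points and induce the manifold topology", so $\mu^{-1}$ is weak$^*$-continuous. Separating points does not give the topology on a non-compact space, and the local chart property only helps once you already know the net stays over $V$: if $\mu(\eta^\theta)\to\mu(\eta)$ in the weak$^*$-topology, the base points $x_\theta$ could a priori lie outside $\mathrm{supp}(b)$, where every one of your test elements $H_{(0,\,bq^i)}(\eta^\theta)=b(x_\theta)q^i(x_\theta)$ and $H_{(b\,\partial/\partial q^j,\,0)}(\eta^\theta)=\eta^\theta\bigl(b\,\tfrac{\partial}{\partial q^j}\big|_{x_\theta}\bigr)$ reads $0$; if, say, $q^i(x)=0$ for all $i$, nothing in this family prevents $x_\theta$ from wandering far from $x$. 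The missing ingredient is the localization argument the paper uses in its Step 2: test against $(0,f)$ with $f$ supported in an arbitrarily small neighbourhood of $x$ and $f(x)=1$ (e.g.\ $f=b$), so that $f(x_\theta)\to 1$ forces $x_\theta$ eventually into that neighbourhood, hence $x_\theta\to x$; only then do your chart functions (with $b\equiv 1$ near the tail of the net) give convergence of the fibre coordinates, i.e.\ $\eta^\theta\to\eta$ in $T^*M$. This is a short fix, but as written the step does not go through.
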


\begin{proof}
Following \Cref{lem:gm_summand} and the subsequent discussion of the continuous dual, $\gM^\ast$ is a complemented subspace of $C^\infty_c(M,\R^m \times \R)^\ast$ for some $m \in \N$. Hence, we can check that $\cO$ is a smooth immersion and a topological embedding as a mapping into $C^\infty_c(M,\R^m \times \R)^\ast \cong C^\infty_c(M,\R^m)^\ast \times C^\infty_c(M,\R)^\ast$.
Since continuity, smoothness and the immersion property are local, it suffices (for those properties) to work in a chart $q \colon U \to \R^d$ on $M$ around some generic point $x$. For ease of notation, we will then identify $U$ with $q(U)$ and $f$ with $f \circ q^{-1}$. Throughout the proof, $K \subset U$ will be a compact neighborhood of $x$. Moreover, with $E$ as in the proof of \Cref{lem:gm_summand}, we can use the embedding 
$$e \colon T U \rightarrow TM \oplus E \cong M \times \R^m$$
to identify $\eta \in T^\ast_y M\cong (\R^d)^\ast$, for $y \in U$, with $\eta \circ e^{-1} \circ \text{proj}_{e(T_y(M))} \in (\R^{m})^\ast$ (where $\text{proj}_{e(T_y(M))} \colon \{y\} \times  \R^m \rightarrow e(T_yM)$ is the projection onto the subspace). Note that this identification preserves the operator norm of $\eta$ and also convergence in $T^\ast U$. Thus, in the following, we will work with linear forms in $(\R^m)^\ast$. 

\medskip
\noindent \textbf{Step 1: $\cO$ is continuous.} Fix a convergent sequence $(\eta^k)_k \subseteq T^\ast M$ and set $x_k \coloneqq \pi_{T^\ast M}(\eta^k)$ and $\eta \coloneqq \lim \eta^k \in T_x^\ast M$. Then, $x_k = \pi_{T^\ast M}(\eta^k) \rightarrow x = \pi_{T^\ast M} (\eta)$. We establish convergence of $\mu(\eta^k) = (\eta^k, \delta_{x_k})$ to $\mu(\eta) = (\eta, \delta_x)$ in the strong topology on $\gM^\ast$, as this implies convergence in all weaker topologies. Let $B \subseteq \gM\subseteq C^\infty_c(M,\R^m) \times C^\infty (M,\R)$ be bounded. Then, for $(X, f) \in B$ and for $k$ large enough that $x_k \in K$,
\begin{align*}
    &\bigl| \blangle (\eta^k, \delta_{x_k}) - (\eta, \delta_x), (X, f) \brangle \bigr|  
    \leq \lvert \eta^k(X_{x_k}) -  \eta(X_x)\rvert + \lvert f(x_k) - f(x) \rvert\\ &\leq \lvert \eta^k(X_{x_k}) - \eta^k (X_x)\rvert + \lvert \eta^k(X_x)-\eta(X_x)\rvert + \lvert f(x_k) - f(x) \rvert \\
    &\leq \lVert \eta^k\rVert_{\mathrm{op}} \lVert X_{x_k}-X_x\rVert + \lVert \eta^k -\eta\rVert_{\mathrm{op}}\lVert X_x\rVert +  C(0,K)\lVert x_k - x\rVert \\
    &\leq  \bigl(\sup\nolimits_l \lVert \eta^l\rVert_{\mathrm{op}} + 1 \bigr) C(0, K) \|x_k - x\| + \lVert \eta^k -\eta\rVert_{\mathrm{op}}\lVert X_x\rVert 
\end{align*} 
where $C(0,K)$ is the constant from \Cref{lem:Arz_Asc}. Since $\lVert \eta^k-\eta\rVert_{\mathrm{op}}\rightarrow 0$, the last term in the final line converges to $0$ and $\sup\nolimits_l \lVert \eta^l\rVert_{\mathrm{op}} < \infty$. Since $x_k \to x$, we conclude that $\cO$ is continuous.

\medskip
\noindent \textbf{Step 2: $\cO$ is a topological embedding.} By Step 1, we know that $\cO$ is continuous, and it is clearly injective. We will show that it is also open onto its image, hence a topological embedding. It suffices to prove the claim for the weak$^*$-topology on $\gM^*$. Let $\theta \mapsto (\eta^\theta, \delta_{x_\theta})$ be a net in $\cO(T^\ast M) \subset \gM^*$ that converges to $(\eta, \delta_x) \in \cO(T^*M)$. Convergence in the weak$^\ast$-topology means that
\begin{align*}
    \blangle (\eta^\theta, \delta_{x_\theta}), (X, f) \brangle = \eta^\theta (X_{x_\theta}) + f(x_\theta)  \to \blangle (\eta, \delta_x), (X, f) \brangle = \eta(X_x) + f(x) 
\end{align*}
for all $(X, f) \in \gM$. Take $X \equiv 0$ and pick a function $f$ supported in a neighborhood $U$ of $x$ and satisfying $f(x) = 1$. Then, $f(x_\theta) \to 1$, so $x_\theta$ must eventually be contained in $U$. We can repeat this for any $f$ supported in an arbitrarily small neighborhood of $x$, hence $x_k \to x$ in $M$. Take now instead $f \equiv 0$ and $X=\partial_i$ equal to the constant $i$th coordinate vector field on an $x$-neighborhood. As $x_\theta\rightarrow x$, the $i$th component of $\eta^\theta$ converges to the $i$th component of $\eta$. Thus, $\eta^\theta \to \eta$ in $T^*M$.

\medskip
\noindent \textbf{Step 3: $\cO$ is a $C^1$-immersion.} 
We first compute the directional derivatives of $\cO$ in the weak$^\ast$-topology. We then prove that the difference quotients also converge in the strong topology. For the weak$^*$-limits, fix $(X,f) \in \gM \subseteq C^\infty_c (M,\R^m\times \R)$ and $\eta \in T^\ast U \subseteq T^\ast M$. For $\eta=(x,\eta_x) \in U \times (\R^m)^\ast$, $V=(v, S) \in \R^d \times (\R^m)^\ast$ and $t > 0$ small enough that $x+tv \in U$,
\begin{align*}
 \blangle \cO(\eta+tV)-\cO(\eta), (X,f)\brangle &=  (\eta_x +tS)(X_{x+tv})+ f(x+tv)-(\eta_x (X_x)+f(x)) \\
 &= \eta_x (X_{x+tv}-X_x) + t S (X_{x+tv}) + f(x+tv)-f(x).
\end{align*}
Dividing by $t$ and exploiting linearity, the limit becomes
\begin{align}\label{deriv_proxy}
    \blangle d\cO (\eta;V), (X,f) \brangle = \eta_x (dX(x;v))+S (X_x)+df(x;v).
\end{align}
Hence, the limit $d\cO (\eta;V) = \lim_{t \to 0} t^{-1}(\mu(\eta + tV) - \mu(\eta))$ exists in the weak$^*$-topology. To see that it also exists in the strong topology, fix a bounded set $B \subseteq C^\infty_c(M,\R^m\times \R)$. Then, for all $(X,f) \in B$, with the notation as above, and $t > 0$ small enough that $x+stv \in K$ for all $s \in [0, 1]$, 
\begin{align*}
&\bigl| \blangle t^{-1} (\cO(\eta+tV)-\cO(\eta)) - d\cO(\eta;V),(X,f)\brangle \bigr| \\ &\leq \lVert \eta_x\rVert_{\mathrm{op}} \lVert t^{-1}(X_{x+tv}-X_x) - dX(x;v)\rVert + |S(X_{x+tv}-X_x)|  \\ & \quad + \lVert t^{-1}(f(x+tv)-f(x))-df(x;v)\rVert
\\
&\leq \lVert \eta_x\rVert_{\mathrm{op}} \int_0^1 \lVert dX(x+stv;v)-dX(x;v) \rVert \, ds + \lVert S\rVert_{\mathrm{op}}\lVert X_{x+tv}-X_x\rVert  \\
& \quad + \int_0^1\lVert df(x+stv;v)-df(x;v)\rVert \, ds \\
&\leq t\lVert v\rVert \bigl(\lVert \eta_x\rVert_{\mathrm{op}} C(1,K)+\lVert S\rVert_{\mathrm{op}}C(0,K) + C(1, K)\bigr)\rightarrow 0 \quad \text{ as } t\rightarrow 0,
\end{align*}
uniformly in $(X, f) \in B$. Here, we used the mean value theorem in integral form (see e.g.\ \cite[Proposition 1.18]{Sch23}) for the second inequality, and $C(0, K)$ and $C(1,K)$ are as in \Cref{lem:Arz_Asc}. This shows that $\cO$ is differentiable as a map to  $\gM^\ast$ with the strong topology. As in Step 1, continuity of $d\cO$ in the strong topology follows from the estimate
\begin{align*}
    &\bigl|\blangle d\cO(\eta;V)-d\cO(\eta^k;V^k), (X,f) \brangle\bigr| 
    \\ &\leq | (\eta - \eta^k)(dX(x;v)) | + |\eta^k(dX(x; v - v^k))| + |\eta^k(dX(x;v^k) - dX(x_k;v^k))| \\ & \quad + \, |(S-S^k)(X_x)| + |S^k(X_x - X_{x_k})| \\ & \quad + \, |df(x;v-v^k)| + |df(x, v^k) - df(x_k, v^k)|
    \\&\leq \lVert \eta - \eta^k\rVert \lVert dX(x;v)\rVert + \|\eta^k \| \|dX(x,\cdot) \| \| v - v^k \| + \| \eta^k \|  \|v^k \| \| x - x_k \| C(1, K) 
    \\ & \quad + \, \|S - S^k\| \|X_x\| + \|S^k\| \|x-x_k \| C(0, K) \\ & \quad + \, \| df(x;\cdot)\|\|v - v^k\| + \|v^k\| \|x - x_k \| C(1, K)
\end{align*}
(obtained using \eqref{deriv_proxy} and omitting the subscripts on the operator norms and the $\eta$'s), which tends to $0$ uniformly in $(X, f) \in B$ as $k \to \infty$.
Now we are left to prove that $\cO$ is an immersion. Varying $(X,f)$ in \eqref{deriv_proxy}, clearly $d\cO(\eta; \cdot)$ is injective. Since $T^\ast M$ is finite-dimensional, \cite[Theorem H]{glo15} implies that $\cO$ is a $C^1$-immersion.

\medskip
\noindent \textbf{Step 4: $\cO$ is smooth.} We start again with a computation in the weak$^\ast$-topology and argue inductively for $N>1$. 
Taking iterated directional derivatives with respect to the $\eta$-variable, for $V_i = (v_i,S_i) \in \R^d \times (\R^m)^\ast$, we obtain 
\begin{align}
d^N\cO (\eta; V_1,\ldots, V_N)(X,f)\label{higher_deriv} &= \eta_x \left(d^N X(x; v_1,\ldots, v_N)\right) +d^Nf(x; v_1,\ldots,v_N) \notag \\ & \quad + \sum_{i=1}^N S_i \left(d^{N-1}X(x; v_{1}, \ldots, \widehat{v_{k}}, \ldots, v_{N})\right)
\end{align}
where, as usual, $\hat{v}$ indicates the omission of the argument. We conclude that the higher derivatives of $\cO$ as a map into $\gM^\ast$ with the weak$^\ast$-topology exist. 
Now arguments as in Step 1 and 3, using the higher order estimates from the Arzela-Ascoli theorem (\Cref{lem:Arz_Asc}), applied to \eqref{higher_deriv}, show that $\cO$ is a $C^N$-map into $\gM^\ast$ with the strong topology. We omit the details; the estimates follow exactly the same strategy.
\end{proof}

\section{Additional Proofs}
\label{app:proofs}
In this appendix, we prove various well-known results (used throughout the text) for which we could not find proofs in the literature. 

\subsection{The Derived Action $d\alpha^0$}
\label{app:moment_map}

In this section, we calculate an explicit expression for the derived action $d\alpha^0 \colon \gM \to \Gamma(T^*M)$, where $\Gamma(T^*M)$ denotes the Lie algebra of smooth vector fields on $T^*M$. The action $\alpha^0 \colon \GM \times T^\ast M \rightarrow T^\ast M$ was described in \eqref{eq:alpha_0}. We then obtain \Cref{prop:moment_map} and \Cref{eq:symp_form_in_proof_of_diffeo} as corollaries.

Let $q \colon U \to \Rd$ be coordinates on $M$ and consider the associated cotangent coordinates $(q, p) \colon T^*U \to \R^{2d}$ (\Cref{setup:notation}). These are coordinates on $T^*M$, so we may in turn consider the associated tangent coordinates on $T(T^*M)$. We will only need these coordinates restricted to the tangent space $T_{\eta_x}(T^*M)$, where $\eta_x \in T_x^*M$ is fixed (and $x \in U$). These are given by
\begin{align}\label{eq:coordinates_tangent}
    T_{\eta_x}(T^*M) \ni \sum_{j=1}^d v^j \frac{\partial}{\partial q^j}\Big|_{\eta_x} + \sum_{j=1}^d w_j \frac{\partial}{\partial p_j}\Big|_{\eta_x} \mapsto (v^j, w_j)_{j=1}^d.
\end{align}

\begin{prop}
    \label{prop:derived_rep_app_B}
    Let $(X, f) \in \gM$ and $\eta_x \in T_x^*M$, with $X = \sum_{j=1}^d X^j \frac{\partial}{\partial q^j}$ and $\eta_x = \sum_{j=1}^d \eta_j dq^j_x$. Then, in the coordinates \eqref{eq:coordinates_tangent}, the following identity holds:
    \begin{align*}
        d\alpha^0_{(X, f)}(\eta_x) = \biggl( X^k(x), \sum_{j=1}^d - \eta_j \frac{\partial X^j}{\partial q^k}(x) - \frac{\partial f}{\partial q^k}(x)  \biggr)_{k=1}^d.
    \end{align*}
\end{prop}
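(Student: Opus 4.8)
The plan is to compute the flow of the vector field $d\alpha^0_{(X,f)}$ explicitly by differentiating the action $\alpha^0$ along a suitable curve in $\GM$, and then read off the coordinate expression. Recall that for an action $\alpha^0 \colon \GM \times T^*M \to T^*M$, the derived (fundamental) vector field at $\eta_x$ is $d\alpha^0_{(X,f)}(\eta_x) = \frac{d}{dt}\big|_{t=0} \alpha^0_{c(t)}(\eta_x)$ for any smooth curve $c \colon (-\varepsilon,\varepsilon) \to \GM$ with $c(0) = (\id_M, 0)$ and $\dot c(0) = (X,f)$. The natural choice is $c(t) = \exp(t(X,f))$, or even more concretely $c(t) = (\Fl^X_t, tf \circ \Fl^{-X}_{\bullet}\text{-integral})$; but since only the $1$-jet at $t=0$ matters, it is cleanest to take $c(t) = (\Fl^X_t, tf)$, whose derivative at $t=0$ is indeed $(X,f)$ by \eqref{eq:exp_map} and the fact that $\int_0^1 (tf)\circ \Fl^{-X}_s\,ds$ has $t$-derivative $f$ at $t=0$.

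First I would write down $\alpha^0_{(\Fl^X_t, tf)}(\eta_x)$ using \eqref{eq:alpha_0}: it equals $(\Fl^{-X}_t)^*\eta_x - d(tf)_{\Fl^X_t(x)} = (\Fl^{-X}_t)^*\eta_x - t\,df_{\Fl^X_t(x)}$. This is a curve in $T^*M$ through $\eta_x$, and I want its velocity at $t=0$ in the coordinates \eqref{eq:coordinates_tangent}. I would split this into the base-point component (the $v^j$ part) and the fiber component (the $w_j$ part). The base point of $\alpha^0_{(\Fl^X_t,tf)}(\eta_x)$ is $\Fl^X_t(x)$, whose velocity at $t=0$ is $X_x$, giving $v^k = X^k(x)$. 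For the fiber component, I would work in coordinates: writing $\eta_x = \sum_j \eta_j\,dq^j_x$, the pullback $(\Fl^{-X}_t)^*\eta_x$ has coordinate components obtained from the Jacobian of $\Fl^{-X}_t$ at the relevant point, and the $-t\,df_{\Fl^X_t(x)}$ term contributes $-t\,\frac{\partial f}{\partial q^k}(\Fl^X_t(x))$. Differentiating at $t=0$: the $df$ term contributes $-\frac{\partial f}{\partial q^k}(x)$ directly, and the pullback term contributes $-\sum_j \eta_j \frac{\partial X^j}{\partial q^k}(x)$, because $\frac{d}{dt}\big|_{t=0}$ of the Jacobian matrix of $\Fl^{-X}_t$ is $-\partial X/\partial q$ (the flow of $-X$), evaluated at $x$, while the base-point dependence of $(\Fl^{-X}_t)^*\eta_x$ in its own argument contributes a term that, being paired against $\frac{d}{dt}\big|_{t=0}\Fl^{X}_t(x)$, must be handled carefully — but since $\eta_x$ is a fixed covector at the fixed point $x$ (not a section), the only $t$-dependence is through the pullback map itself. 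I should double-check this bookkeeping since mixing the moving base point with the pullback is exactly where sign and index errors creep in; the cleanest way is to fix a local frame and compute $\big((\Fl^{-X}_t)^*\eta_x\big)_k = \sum_j \eta_j \frac{\partial (\Fl^{-X}_t)^j}{\partial q^k}\big|_{?}$ and track which point the derivative is evaluated at.

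The main obstacle I anticipate is precisely this coordinate bookkeeping for the pullback term: one must be careful that $(\phi)^*\eta_x = \eta_x \circ T\phi$ lands in $T^*_{\phi^{-1}(x)}M$ when $\eta_x \in T^*_xM$ (cf.\ the convention $(\phi^{-1})^*\eta_x = \eta_x\circ T_{\phi(x)}\phi^{-1}$ stated after \eqref{eq:alpha_0}), so the base point moves as $t$ varies and one must not confuse the partial derivative $\frac{\partial}{\partial q^k}$ in the fiber direction with differentiation of the base point. I would resolve this by noting that in the coordinates \eqref{eq:coordinates_tangent}, the decomposition into $\frac{\partial}{\partial q^k}$ and $\frac{\partial}{\partial p_j}$ parts is canonical, so I can compute the two components of $\frac{d}{dt}\big|_{t=0}$ separately and independently, and for the $\frac{\partial}{\partial p_j}$-part it suffices to differentiate the $p$-coordinates of the curve, which are exactly $p_j\big(\alpha^0_{(\Fl^X_t,tf)}(\eta_x)\big) = \sum_k \eta_k \frac{\partial (\Fl^{-X}_t)^k}{\partial q^j}\big(\Fl^X_t(x)\big) - t\frac{\partial f}{\partial q^j}\big(\Fl^X_t(x)\big)$. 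Differentiating this scalar function of $t$ at $t=0$, using $\Fl^X_0 = \id$, $\frac{\partial (\Fl^{-X}_0)^k}{\partial q^j} = \delta^k_j$, $\frac{d}{dt}\big|_{t=0}\frac{\partial(\Fl^{-X}_t)^k}{\partial q^j} = -\frac{\partial X^k}{\partial q^j}(x)$, and the product/chain rule (the chain-rule term $\sum_k \eta_k \frac{\partial \delta^k_j}{\partial q^\ell}X^\ell = 0$ vanishes since $\delta^k_j$ is constant), yields exactly $w_j = -\sum_k \eta_k \frac{\partial X^k}{\partial q^j}(x) - \frac{\partial f}{\partial q^j}(x)$, which is the claimed formula after renaming indices. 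Finally I would remark that smoothness of $(X,f)\mapsto d\alpha^0_{(X,f)}$ and the fact that this genuinely is the fundamental vector field follow from general facts about Lie group actions in Bastiani calculus (or can be checked directly from the above coordinate formula, which is manifestly smooth in $(X,f)$ and $\eta_x$).
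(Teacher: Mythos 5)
Your proposal is correct and follows essentially the same route as the paper's proof: differentiate the curve $t \mapsto \alpha^0_{(\Fl^X_t, tf)}(\eta_x)$ at $t=0$ in cotangent coordinates, with the key observation that the spatial (chain-rule) contribution vanishes because the Jacobian of $\Fl^{-X}_t$ at $t=0$ is the constant identity matrix, while the mixed $t$–$q$ derivative of $q^j\circ\Fl^{-X}_t$ yields $-\partial X^j/\partial q^k$. The paper packages this via the auxiliary function $F(t,y)=(q^j\circ\Fl^{-X}_t)(y)$ and Schwarz's theorem, but the computation is the same.
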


\begin{proof}
Since pullbacks commute with the exterior derivative, 
\begin{align*}
    \alpha^0_{(X, f)}(\eta_x) &= (\phi^{-1})^*\eta_x - df_{\phi(x)}
    = \sum_{j=1}^d \biggl( \eta_j d(q^j \circ \phi^{-1})_{\phi(x)} - \frac{\partial f}{\partial q^j} \bigl(\phi(x)\bigr) dq^j_{\phi(x)} \biggr)
    \\ &= \sum_{j=1}^d \biggl( \sum_{k=1}^d \eta_j \frac{\partial (q^j \circ \phi^{-1})}{\partial q^k} - \frac{\partial f}{\partial q^k} \biggr) \bigl(\phi(x)\bigr) dq^k_{\phi(x)}.
\end{align*}
Using the curve $t \mapsto (\Fl^X_t, tf) \in \GM$ to compute the derived action, we find that $d\alpha^0_{(X, f)}(\eta_x)$ has coordinates
\begin{align}
    \label{eq:proof_app_B}
    \ddt \Big|_{t=0} \biggl( q^k, \sum_{j=1}^d \eta_j \frac{\partial (q^j \circ \Fl^{-X}_t)}{\partial q^k} - t \frac{\partial f}{\partial q^k}  \biggr)_{k=1}^d \bigl( \Fl^X_t(x) \bigr).
\end{align}
Clearly, $\ddt \big|_{t=0} q^k(\Fl^X_t(x)) = X_x(q^k)= X^k(x)$ and $\ddt\big|_{t=0} t \frac{\partial f}{\partial q^k} \bigl( \Fl^X_t(x) \bigr) = \frac{\partial f}{\partial q^k}(x)$. As for the remaining terms, we note that $q^j \circ \Fl^{-X}_t$ is a smooth function 
\begin{align*}
    F \colon \R \times M \to M, \quad (t, y) \mapsto \bigl(q^j \circ \Fl^{-X}_t \bigr)(y),
\end{align*}
and hence, $\ddt \big|_{t=0} \frac{\partial (q^j \circ \Fl^{-X}_t)}{\partial q_k} ( \Fl^X_t(x) )$ becomes
\begin{align*}
    \ddt \Big|_{t=0} \frac{\partial F}{\partial q^k} \bigl(t, \Fl^X_t(x) \bigr) = \frac{\partial^2 F}{\partial t \partial q^k}(0, x) + \sum_{l=1}^d X^l(x) \frac{\partial^2 F}{\partial q^l \partial q^k}(0, x).
\end{align*}
But $F(0, y) = q^j(y)$, so that $\frac{\partial^2 F}{\partial q^l \partial q^k}(0, x) = \frac{\partial (\delta^j_k)}{\partial q^l} = 0$, and
\begin{align*}
    \frac{\partial F}{\partial t}(0, y) = \frac{d}{dt}\Big|_{t=0} q^j(\Fl^{-X}_t(y)) = (-X)_y(q^j) = - X^j(y).
\end{align*}
Inserting everything back into \eqref{eq:proof_app_B} yields the desired result.
\end{proof}

\begin{corollary}
    \label{cor:app_comoment_map}
    The map
    \begin{align*}
        H \colon \gM &\to C^\infty(T^*M) \\
        (X, f) &\mapsto H_{(X, f)} \colon \eta_x \mapsto \eta_x(X_x) + f(x)
    \end{align*}
    is a comoment map for $\alpha^0$, i.e.\ $dH_{(X, f)}(\cdot) = \omega'( d\alpha^0_{(X, f)}, \cdot )$ for all $(X, f) \in \gM$, where $\omega'$ denotes the symplectic form on $T^*M$.
\end{corollary}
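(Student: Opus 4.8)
The plan is to check the identity $dH_{(X,f)}(\cdot) = \omega'\bigl(d\alpha^0_{(X,f)},\cdot\bigr)$ directly in the cotangent coordinates $(q,p)\colon T^*U \to \R^{2d}$ from \Cref{setup:coordinates}, using the explicit formula for $d\alpha^0_{(X,f)}$ supplied by \Cref{prop:derived_rep_app_B}. Both sides are globally defined $1$-forms on $T^*M$, so it suffices to verify the equality on each chart $T^*U$; and on such a chart it is enough to evaluate both sides against the coordinate vector fields $\frac{\partial}{\partial q^k}$ and $\frac{\partial}{\partial p_k}$ spanning $T_{\eta_x}(T^*M)$ (cf.\ \eqref{eq:coordinates_tangent}).

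First I would record that in these coordinates the tautological $1$-form is $\theta = \sum_j p_j\, dq^j$ and the canonical symplectic form is $\omega' = -d\theta = \sum_{k=1}^d dq^k \wedge dp_k$, so that for a tangent vector $Y = \sum_k v^k \frac{\partial}{\partial q^k} + \sum_k w_k \frac{\partial}{\partial p_k}$ one has $\iota_Y\omega' = \sum_{k=1}^d\bigl(v^k\, dp_k - w_k\, dq^k\bigr)$. Feeding in the coordinate description of $d\alpha^0_{(X,f)}(\eta_x)$ from \Cref{prop:derived_rep_app_B} --- that is, $v^k = X^k(x)$ and $w_k = -\sum_j p_j(\eta_x)\frac{\partial X^j}{\partial q^k}(x) - \frac{\partial f}{\partial q^k}(x)$ --- yields
\begin{align*}
    \omega'\bigl(d\alpha^0_{(X,f)},\cdot\bigr) = \sum_{k=1}^d X^k\, dp_k + \sum_{k=1}^d\biggl(\sum_{j=1}^d p_j\frac{\partial X^j}{\partial q^k} + \frac{\partial f}{\partial q^k}\biggr)dq^k.
\end{align*}
On the other hand, in these coordinates $H_{(X,f)}$ is the function $(q,p) \mapsto \sum_j p_j X^j(q) + f(q)$, whose differential is visibly the same $1$-form. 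Hence the two sides agree on $T^*U$, and therefore on all of $T^*M$.

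I do not expect a genuine obstacle here; the only point requiring care is the sign convention for $\omega'$, which must be the one used consistently in the rest of the paper (in particular with the computation of $\omega'$ behind \eqref{eq:symp_form_in_proof_of_diffeo}) --- under the opposite convention the corollary would instead read $dH_{(X,f)} = -\omega'(d\alpha^0_{(X,f)},\cdot)$. Once \Cref{cor:app_comoment_map} is established, \Cref{prop:moment_map} follows by dualizing $H$ to $\mu$, and \eqref{eq:symp_form_in_proof_of_diffeo} follows by evaluating $\omega'\bigl(d\alpha^0_{(X,f)}, d\alpha^0_{(Y,g)}\bigr) = dH_{(X,f)}\bigl(d\alpha^0_{(Y,g)}\bigr)$ using \Cref{prop:derived_rep_app_B} once more, or equivalently via the Poisson bracket relation $\{H_{(X,f)}, H_{(Y,g)}\} = H_{[(X,f),(Y,g)]}$ together with \eqref{eq:Liebracket}.
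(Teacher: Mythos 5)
Your proposal is correct and follows essentially the same route as the paper's proof: both verify the identity in cotangent coordinates by combining the formula for $d\alpha^0_{(X,f)}$ from \Cref{prop:derived_rep_app_B} with $\omega' = \sum_{k=1}^d dq^k \wedge dp_k$ and comparing with the coordinate expression $dH_{(X,f)}$, $H_{(X,f)}(q,p)=\sum_j p_j X^j(q)+f(q)$. Your sign bookkeeping matches the paper's convention, so no issues.
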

\begin{proof}
    In cotangent coordinates $(q, p) \colon T^*U \to \R^{2d}$, the symplectic form on $T^*M$ is given by $\sum_{k=1}^d dq^k \wedge dp_k$. Using the identity for $d\alpha^0_{(X, f)}$ from \Cref{prop:derived_rep_app_B}, 
    \begin{align*}
        \omega'_{\eta_x} \bigl( d\alpha^0_{(X, f)}(\eta_x), \cdot \bigr) = \sum_{k=1}^d X^k(x) dp_k +  \sum_{k=1}^d \biggl( \sum_{j=1}^d \eta_j \frac{\partial X^j}{\partial q^k}(x) + \frac{\partial f}{\partial q^k}(x) \biggr) dq^k.
    \end{align*}
    On the other hand, in coordinates, $H_{(X, f)}(q, p) = \sum_{j=1}^d p_j X^j(q) + f(q)$. Calculating $(dH_{(X, f)})_{\eta_x}$, we find that it equals the expression computed above.
\end{proof}

\begin{corollary}
    Equation \eqref{eq:symp_form_in_proof_of_diffeo} holds. That is,
    \begin{align*}
        \omega'_{\eta_x} \bigl( d\alpha^0_{(X, f)}(\eta_x), d\alpha^0_{(Y, g)}(\eta_x) \bigr) = -\eta_x \bigl( [X, Y]_x \bigr) - (Xg - Yf)(x)
    \end{align*}
    for all $x \in M$, $\eta_x \in T_x^*M$ and $(X, f), (Y, g) \in \gM$.
\end{corollary}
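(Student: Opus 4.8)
The plan is a direct coordinate computation using the explicit formula for the derived action from \Cref{prop:derived_rep_app_B}. Work in cotangent coordinates $(q,p)\colon T^*U\to\R^{2d}$ around $x$, and write $X=\sum_j X^j\frac{\partial}{\partial q^j}$, $Y=\sum_j Y^j\frac{\partial}{\partial q^j}$ and $\eta_x=\sum_j \eta_j\,dq^j_x$. By \Cref{prop:derived_rep_app_B}, in the tangent coordinates \eqref{eq:coordinates_tangent} the vector $d\alpha^0_{(X,f)}(\eta_x)$ has $v$-components $X^k(x)$ and $w$-components $-\sum_j\eta_j\frac{\partial X^j}{\partial q^k}(x)-\frac{\partial f}{\partial q^k}(x)$, and similarly $d\alpha^0_{(Y,g)}(\eta_x)$ has $v$-components $Y^k(x)$ and $w$-components $-\sum_j\eta_j\frac{\partial Y^j}{\partial q^k}(x)-\frac{\partial g}{\partial q^k}(x)$.

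Next I would substitute these into the canonical symplectic form, which in these coordinates is $\omega'=\sum_k dq^k\wedge dp_k$. Since $\omega'(V,W)=\sum_k\bigl(V^{(q^k)}W^{(p_k)}-W^{(q^k)}V^{(p_k)}\bigr)$ for tangent vectors with the indicated components, the left-hand side of \eqref{eq:symp_form_in_proof_of_diffeo} becomes
\[
\sum_k X^k(x)\Bigl(-\sum_j\eta_j\tfrac{\partial Y^j}{\partial q^k}(x)-\tfrac{\partial g}{\partial q^k}(x)\Bigr)-\sum_k Y^k(x)\Bigl(-\sum_j\eta_j\tfrac{\partial X^j}{\partial q^k}(x)-\tfrac{\partial f}{\partial q^k}(x)\Bigr).
\]
Then I would separate the terms involving $\eta$ from those that do not. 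The $\eta$-free terms assemble to $-\sum_k X^k\tfrac{\partial g}{\partial q^k}(x)+\sum_k Y^k\tfrac{\partial f}{\partial q^k}(x)=-(Xg-Yf)(x)$, while the $\eta$-terms assemble to $-\sum_j\eta_j\sum_k\bigl(X^k\tfrac{\partial Y^j}{\partial q^k}-Y^k\tfrac{\partial X^j}{\partial q^k}\bigr)(x)=-\sum_j\eta_j[X,Y]^j(x)=-\eta_x([X,Y]_x)$, using the standard coordinate formula for the commutator of vector fields (the same bracket appearing in \eqref{eq:Liebracket}). Adding the two pieces gives precisely $-\eta_x([X,Y]_x)-(Xg-Yf)(x)$, as claimed.

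The computation has no real obstacle beyond index bookkeeping and keeping the sign conventions straight; the manifestly coordinate-independent right-hand side serves as a built-in consistency check that the coordinate expressions have been assembled correctly. One could alternatively phrase the argument via \Cref{cor:app_comoment_map}: that corollary says $d\alpha^0_{(X,f)}$ is the Hamiltonian vector field of $H_{(X,f)}$, so the left-hand side of \eqref{eq:symp_form_in_proof_of_diffeo} equals $dH_{(X,f)}\bigl(d\alpha^0_{(Y,g)}(\eta_x)\bigr)$, the directional derivative of $H_{(X,f)}$ along $d\alpha^0_{(Y,g)}$; evaluating this with the coordinate formula $H_{(X,f)}(q,p)=\sum_j p_j X^j(q)+f(q)$ and \Cref{prop:derived_rep_app_B} reduces to the same calculation, so I would simply present the direct version.
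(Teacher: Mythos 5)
Your computation is correct and matches the paper's own argument: the paper likewise evaluates the canonical form $\sum_k dq^k\wedge dp_k$ on the coordinate expressions from \Cref{prop:derived_rep_app_B} (routed through the one-form $\omega'_{\eta_x}(d\alpha^0_{(X,f)}(\eta_x),\cdot)$ computed in the proof of \Cref{cor:app_comoment_map}, which is exactly the alternative you mention), arriving at the same intermediate expression and the same regrouping into the $\eta$-terms and the $(Xg-Yf)$-terms. No gaps; this is essentially the paper's proof.
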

\begin{proof}
    Using the expression for $\omega'_{\eta_x}( d\alpha^0_{(X, f)}(\eta_x), \cdot )$ from the proof of the previous corollary, we find that $\omega'_{\eta_x} ( d\alpha^0_{(X, f)}(\eta_x), d\alpha^0_{(Y, g)}(\eta_x) )$ equals
    \begin{align*}
        &\sum_{k=1}^d X^k(x) \biggl(-\sum_{j=1}^n \eta_j \frac{\partial Y^j}{\partial q^k}(x) - \frac{\partial g}{\partial q^k}(x) \biggr) + \sum_{k=1}^d \biggl( \sum_{j=1}^d \eta_j \frac{\partial X^j}{\partial q^k}(x) + \frac{\partial f}{\partial q^k}(x) \biggr) Y^k(x) \\
        &\quad = \sum_{j=1}^d \eta_j \bigl( -X_x(Y^j) + Y_x(X^j) \bigr) + \bigl( - X_x(g) + Y_x(f) \bigr),
    \end{align*}
    which confirms Equation \eqref{eq:symp_form_in_proof_of_diffeo}.
\end{proof}

\subsection{Proof of \Cref{lem:Radon_Nikodym_formula}}\label{app_Radon_Nikodym}

In this section, we prove a slight generalization of \Cref{lem:Radon_Nikodym_formula}

\begin{prop*}
Let $M$ and $N$ be ($\sigma$-compact) Riemannian manifolds with metrics $g_M$ and $g_N$, respectively. Let $\phi \colon M \to N$ be a diffeomorphism and $q \colon U \to \Rd$ be coordinates on $M$. Then,
\begin{align*}
        \frac{d (\phi^* V_{g_N})}{dV_{g_M}}(x) = \Biggl| \frac{\det g_N\bigl(T\phi  \frac{\partial}{\partial q^i}\big|_x, T\phi \frac{\partial}{\partial q^j}\big|_x \bigr)}{\det g_M \bigl(\frac{\partial}{\partial q^i}\big|_x, \frac{\partial}{\partial q^j}\big|_x \bigr)} \Biggr|^{\frac12}
\end{align*}
for all $x \in U$. In particular, this is a smooth function on $M$.
\end{prop*}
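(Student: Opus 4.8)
The plan is to reduce the statement to a computation in a single chart by transporting $q$ from $M$ to $N$ along $\phi$. First I would introduce the chart $\tilde q \coloneqq q \circ \phi^{-1} \colon \phi(U) \to \Rd$ on $N$. Its defining feature is that the coordinate representation of $\phi$, namely $\tilde q \circ \phi \circ q^{-1}$, is the identity map of $q(U)$; hence $T_x\phi\bigl(\frac{\partial}{\partial q^i}\big|_x\bigr) = \frac{\partial}{\partial \tilde q^i}\big|_{\phi(x)}$ for all $x \in U$. Writing $g_{M,q}$ and $g_{N,\tilde q}$ for the coordinate Gram matrices of the two metrics (so $g_{M,q}(x)_{ij} = g_M(\frac{\partial}{\partial q^i}\big|_x, \frac{\partial}{\partial q^j}\big|_x)$ and similarly for $g_{N,\tilde q}$), this identity says exactly that the numerator in the claimed formula equals $\det g_{N,\tilde q}(\phi(x))$.

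The remaining point is a comparison of densities. By \Cref{setup:coordinates}, on $U$ the measure $V_{g_M}$ equals $\sqrt{|\det g_{M,q}|}\, d^d q$, where $d^d q = |dq^1 \wedge \cdots \wedge dq^d|$. For a Borel set $A \subseteq U$, the definition of the pullback measure gives $\phi^* V_{g_N}(A) = V_{g_N}(\phi(A))$, which by \Cref{setup:coordinates} in the chart $\tilde q$ equals $\int_{\phi(A)} \sqrt{|\det g_{N,\tilde q}|}\, |d\tilde q^1 \wedge \cdots \wedge d\tilde q^d|$; pulling back along $\phi$ and using $\tilde q^i \circ \phi = q^i$ turns $|d\tilde q^1 \wedge \cdots \wedge d\tilde q^d|$ into $d^d q$ and $\sqrt{|\det g_{N,\tilde q}|}$ into the function $x \mapsto \sqrt{|\det g_{N,\tilde q}(\phi(x))|}$. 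Thus, on $U$, both $V_{g_M}$ and $\phi^* V_{g_N}$ are absolutely continuous with respect to $d^d q$ with smooth, strictly positive densities (positivity because the metrics are positive definite), so the Radon--Nikodym derivative $\frac{d(\phi^* V_{g_N})}{dV_{g_M}}$ exists, is smooth on $U$, and equals the quotient of those densities, which is precisely the right-hand side of the claim. Since the Radon--Nikodym derivative of two mutually absolutely continuous Borel measures is intrinsic (determined up to null sets), the smooth representatives obtained in different charts agree on overlaps and patch to a single smooth function on $M$.

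I do not expect a substantive obstacle; the care needed is entirely in the bookkeeping --- using the convention $\phi^* V_{g_N}(A) = V_{g_N}(\phi(A))$, taking $\tilde q = q\circ\phi^{-1}$ (not $q\circ\phi$) as the chart trivialising $\phi$, and keeping the absolute values throughout because we work with densities and assume no orientability. Finally, \Cref{lem:Radon_Nikodym_formula} is the special case $M = N$, $g_M = g_N = g$, and $\phi$ replaced by $\phi^{-1}$; the two displayed forms in its statement are matched by the elementary identity $\frac{\partial}{\partial(q\circ\phi)^i}\big|_{\phi^{-1}(x)} = T\phi^{-1}\frac{\partial}{\partial q^i}\big|_x$.
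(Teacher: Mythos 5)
Your proposal is correct and follows essentially the same route as the paper's proof: both transport the chart to $\phi(U)$ via $\tilde q = q\circ\phi^{-1}$, use the identity $T\phi\bigl(\tfrac{\partial}{\partial q^i}\big|_x\bigr)=\tfrac{\partial}{\partial \tilde q^i}\big|_{\phi(x)}$, and compare the coordinate expressions of the two measures (the paper phrases this by integrating test functions $\psi\in C_c^\infty(M)$, you by comparing Radon--Nikodym densities with respect to $d^dq$, which is the same computation). Your closing remarks on the pullback convention, the patching of the locally defined smooth representatives, and the reduction of \Cref{lem:Radon_Nikodym_formula} to the special case $M=N$, $\phi\mapsto\phi^{-1}$ are all accurate.
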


\begin{proof}
    Let $\psi \in C_c^\infty(M)$. The change of variables formula gives
    \begin{align*}
        \int_U \psi\, d(\phi^* V_{g_N}) = \int_{\phi(U)} \psi \circ \phi^{-1} \, dV_{g_N}.
    \end{align*}
    Using $q \circ \phi^{-1}$ as coordinates on $\phi(U)$, the right hand side becomes 
    \begin{align}
        \label{eq:Radon_Nikodym_proof}
        \int_{q(U)} (\psi \circ q^{-1})(r) \biggl| \det g_N \biggl( \frac{\partial}{\partial(q \circ \phi^{-1})^i}\Big|_{\phi \circ q^{-1}(r)}, \frac{\partial}{\partial(q \circ \phi^{-1})^j}\Big|_{\phi \circ q^{-1}(r)} \biggr) \biggr|^{\frac12} \,d^dr.
    \end{align}
    Unwinding the definitions, we see that 
    \begin{align*}
        \frac{\partial}{\partial(q \circ \phi^{-1})^i}\Big|_{\phi \circ q^{-1}(r)}  = T \phi \Bigl( \frac{\partial}{\partial q^i} \Big|_{q^{-1}(r)} \Bigr).
    \end{align*}
    Denoting the expression for the Radon-Nikodym derivative by $\mathcal R \colon M \to (0, \infty)$ and expressing $\int_U \psi \mathcal R \, d V_{g_M}$ using $q$-coordinates, one obtains \eqref{eq:Radon_Nikodym_proof}. This implies that $\int_U \psi \, d(\phi^* V_{g_N}) = \int_U \psi \mathcal R \, dV_{g_M}$ for all $\psi \in C_c^\infty(M)$.
\end{proof}

\subsection{Proof of \Cref{prop:derived_rep}}\label{app_derived_rep}
\textbf{Statement of \Cref{prop:derived_rep}} \emph{For $(X, f) \in \gM$ and $\psi \in C_c^\infty(M)$,}
    \begin{align*}
        d\rho^h_{(X, f)}\psi  = -\frac{2\pi i }{h}  f \cdot \psi  - \biggl(X\psi + \frac12 (\divv X) \cdot \psi \biggr) = -\frac{2\pi  i}{h} \mathcal Q^h H_{(X,f)}.
    \end{align*}

\begin{proof}
    Fix $x \in M$ and $\psi \in C_c^\infty(M)$ and let $q \colon U \to \Rd$ be normal coordinates at $x$, \cite[III.8]{KaN63}. Using \eqref{eq:exp_map}, the second component of the Lie group exponential differentiates as follows:
    \begin{align*}
        \ddt \Big|_{t=0} t\int_0^1 f \circ \Fl^{tX}_{-s} \, ds = \int_0^1 f \circ \id_M \,ds + 0 = f.
    \end{align*}
    Writing $g(t)$ for the matrix $g\bigl( T_x \Fl_t^{-X} \tfrac{\partial}{\partial q^i}|_x, T_x \Fl_t^{-X} \tfrac{\partial}{\partial q^j}|_x \bigr)$, \Cref{app_Radon_Nikodym} and the definition of $\rho^h$ \eqref{eq:def_of_rho} now imply that
    \begin{align*}
        \ddt \Big|_{t=0} \Bigl( \rho^h_{\exp(tX, tf)} \psi(x) \Bigr) &= -\frac{2\pi i}{h} f (x) \psi(x) - X_x \psi  + \psi(x)\ddt\Big|_{t=0} \biggl| \frac{\det g(t)}{\det g(0)} \biggr|^{\frac14} \\
        &= -\frac{2\pi i}{h} f (x) \psi(x) - X_x \psi  + \frac14\psi(x)\ddt\Big|_{t=0} \det g(t),
    \end{align*}
    where we have also used that $g(0)$ is the identity, by our choice of normal coordinates. Jacobi's formula gives
    \begin{align*}
        \ddt\Big|_{t=0} \det g(t) = \det g(0)  \cdot \mathrm{tr}\biggl( g^{-1}(0) \frac{dg}{dt}(0) \biggr) = \mathrm{tr}\biggl(\frac{dg}{dt}(0)\biggr).
    \end{align*}
    As the partial derivatives of the metric vanish at $x$, \cite[III. Proposition 8.4]{KaN63},
    \begin{align*}
        \frac{dg}{dt}(0) = g \biggl( \ddt\Big|_{t=0} T_x \Fl_t^{-X} \tfrac{\partial}{\partial q^i}|_x, \tfrac{\partial}{\partial q^j}|_x  \biggr) + g \biggl(  \tfrac{\partial}{\partial q^i}|_x,  \ddt\Big|_{t=0} T_x \Fl_t^{-X} \tfrac{\partial}{\partial q^j}|_x  \biggr).
    \end{align*}
    Using again that the metric at $x$ is the identity, the first term becomes
    \begin{align*}
        \ddt\Big|_{t=0} \bigl( T_x \Fl_t^{-X} \tfrac{\partial}{\partial q^i}|_x \bigr)(q^j) = \ddt\Big|_{t=0} \frac{\partial}{\partial q^i}\Big|_x \bigl(q^j \circ \Fl_t^{-X} \bigr) = - \frac{\partial}{\partial q^i}\Big|_x X(q^j),
    \end{align*}
    and the second term is the same with $i$ and $j$ swapped. Consequently, 
    \begin{align*}
        \ddt\Big|_{t=0} \det g(t) = \mathrm{tr}\biggl(\frac{dg}{dt}(0)\biggr) &= -2 \sum_{j=1}^d \frac{\partial}{\partial q^j}\Big|_x X(q^j)  = -2 \, (\divv X)(x), \\ \text{hence} \quad
        \ddt \Big|_{t=0} \Bigl( \rho^h_{\exp(tX, tf)} \psi(x) \Bigr) &= -\frac{2\pi i}{h} f (x) \psi(x) - X_x \psi - \frac 12\, (\divv X)(x) \psi(x).
    \end{align*}
    This proves pointwise convergence to the desired limit. Since $\rho^h_{\exp(tX, tf)} \psi$ is supported in the compact set $\supp(X) \cap \supp(\psi)$ and bounded there (independently of $t$) for sufficiently small $t$, the dominated convergence theorem \cite[Theorem 22.29]{Sch97} gives $L^2$-convergence. Strong continuity of $t \mapsto \rho^h_{\exp(tX, tf)}$ also follows straightforwardly from compactness of $X$ and the dominated convergence theorem, whence Stone's theorem \cite[Theorem~10.15]{Hall13} gives the result. 
\end{proof}

\subsection{A Haar System on the Tangent Groupoid}
\label{subsec:app_tangent_groupoid}

In this section, we prove that \eqref{eq:Haar_system} defines a Haar system on $\TGM$. Recalling the notation from Examples \ref{ex:pair_groupoid} and \ref{ex:tangent_bundle}, we have:

\begin{prop*}
    A Haar system on $\TGM$ is given by setting $\mu^{(0, x)} = \lambda_x$ and $\mu^{(h, x)} = h^{-d}\vol$.
\end{prop*}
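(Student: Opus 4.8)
The plan is to verify the three axioms of a Haar system directly from the definition, treating the fibre over $(0,x)$ — where $\mu^{(0,x)}=\lambda_x$ lives on $t^{-1}(0,x)=T_xM$ — separately from the fibres over $(h,x)$ with $h>0$ — where $\mu^{(h,x)}=h^{-d}\vol$ lives on $t^{-1}(h,x)\cong M$. Since any composition of morphisms in $\TGM$ lies over a single value of $h$, these two regimes never interact, so the first two axioms become easy. Equivalence with Lebesgue measure in charts is clear: in tangent coordinates $\lambda_x$ has the strictly positive smooth density $\sqrt{|\det g_q(x)|}\,d^d v$, and $h^{-d}\vol$ is a strictly positive multiple of the Riemannian measure. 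For left-invariance, observe that in the pair groupoid $\{h\}\times M\times M$ left translation by $(h,x_2,x_1)$ is the identity on the $w$-coordinate of $t^{-1}(h,x_1)=\{(h,x_1,w):w\in M\}$, so the required identity is just the independence of $h^{-d}\vol$ from the base point; over $\{0\}\times TM$, left translation by $v_x\in T_xM$ is translation by $v_x$ on the vector space $T_xM$, and $\lambda_x$, being a constant multiple of Lebesgue measure in linear coordinates, is translation-invariant. In both cases a one-line change of variables finishes the argument.

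The substantial point is the smoothness axiom: for $f\in C_c^\infty(\TGM)$ the map $J_f\colon[0,1]\times M\to\C$, $(h,x)\mapsto\int_{t^{-1}(h,x)}f\,d\mu^{(h,x)}$, must be smooth, the only delicate place being $h=0$. I would first reduce, using linearity of $J_f$ in $f$ and a smooth partition of unity on $\TGM$ subordinate to the open cover $\{U_\iota,U_\beta\}$ (with $U_\iota=(0,1]\times M\times M$ and $U_\beta=\beta(V_\beta)$, cf.\ \eqref{eq:Phi_boundary}), to two cases. If $\supp f\subset U_\iota$, then $\supp f$ is a compact subset of $(0,1]\times M\times M$, hence bounded away from $h=0$; thus $J_f(h,x)=h^{-d}\int_M f(h,x,w)\,d\vol(w)$ on a compact subset of $(0,1]$ and $J_f$ vanishes near $h=0$, so $J_f$ is smooth by smooth dependence of integrals on parameters \cite[8.11.12]{Die60}.

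If $\supp f\subset U_\beta$, put $b\coloneqq f\circ\beta\in C_c^\infty(V_\beta)$. For $h>0$ the map $v_x\mapsto\exp_x(-hv_x)$ is a diffeomorphism from $\{v_x\in T_xM: hv_x\in U_{TM}\}$ onto $\{w\in M:(h,x,w)\in U_\beta\}$, a set containing every $w$ with $f(h,x,w)\neq 0$; its Jacobian, computed in a chart, equals $h^d$ times a factor $\rho(h,x,v_x)$ that is smooth on $V_\beta$ — a product of a ratio of the functions $\sqrt{|\det g_q|}$ with $|\det|$ of the coordinate derivative of $\exp_x$ — and satisfies $\rho(0,x,v_x)=1$, since $\exp_x$ fixes $x$ and has differential $\id$ at $0_x$. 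The change of variables then gives, for $h>0$,
\begin{align*}
    J_f(h,x)=h^{-d}\int_M f(h,x,w)\,d\vol(w)=\int_{T_xM}b(h,v_x)\,\rho(h,x,v_x)\,d\lambda_x(v_x),
\end{align*}
and the right-hand side is smooth in $(h,x)$ throughout $[0,1]\times M$, because in a chart it is an integral over $\R^d$ of a compactly supported integrand depending smoothly on the parameter $(h,x)$. At $h=0$ it equals $\int_{T_xM}f(0,v_x)\,d\lambda_x(v_x)=\int_{t^{-1}(0,x)}f\,d\mu^{(0,x)}$, which is the value of $J_f$ at $(0,x)$; hence $J_f$ is smooth everywhere.

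The main obstacle is this last computation: one must confirm that the prefactor $h^{-d}$ cancels exactly against the $h^d$ produced by the derivative of $v_x\mapsto\exp_x(-hv_x)$, and that the surviving integrand $b\cdot\rho$ extends smoothly across $h=0$ — the point being that the rescaling built into the diffeomorphism $\beta$ is precisely what makes this happen. The preliminary partition-of-unity reduction is what guarantees that, in the case $\supp f\subset U_\beta$, the change of variables accounts for the entire fibre integral for every $h$, so that no separate estimate on the rate at which $\supp f$ concentrates near the diagonal as $h\to 0$ is required.
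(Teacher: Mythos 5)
Your proposal is correct and takes essentially the same route as the paper: after the partition-of-unity reduction to functions supported near $\{0\}\times TM$, your substitution $w=\exp_x(-hv_x)$ with Jacobian $h^d\rho(h,x,v_x)$, where $\rho$ is smooth up to $h=0$ with $\rho(0,x,v_x)=1$, is exactly the computation the paper carries out via its $(h,x)$-dependent coordinates $v_{(h,x)}$ and the identity $g_{(h,x)}=h^2 g_{(1,x)}$. Your explicit checks of the first two Haar-system axioms and of the case $\supp f\subset U_\iota$ merely spell out steps the paper leaves implicit.
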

\begin{proof}
    The only condition that requires some work is smoothness of the function
    \begin{align}
        \label{eq:Haar_system_1}
        (0, x) &\mapsto \int_{T_xM} f(0, v_x) \,d\lambda_x(v_x) \quad \text{and} \quad (h, x) \mapsto \int_M f(h, x, y) \,h^{-d} d\vol(y)
    \end{align}
    for $f \in C_c^\infty(\TGM)$ in the limit $h \to 0$. Using a partition of unity, we may assume that $f$ is supported in $U_{\beta} \coloneqq \beta(V_{\beta})$ and furthermore that $f \circ  \beta$ is supported in $[0, 1] \times TU$, where $q \colon U \to \Rd$ are coordinates on $M$. With respect to the associated tangent coordinates $(q, v) \colon TU \to \R^{2d}$, we have, for $h > 0$,
    \begin{align*}
        (\id_{[0, 1]}, q, v) \circ (\beta)^{-1} (h, x, y) = \Bigl(h, q(x), -\frac1h T_xq \bigl(\exp_x^{-1}(y)\bigr) \Bigr).
    \end{align*}
    Introducing the $(h, x)$-dependent coordinates
    \begin{align*}
         y \mapsto v_{(h, x)}(y) \coloneqq -\frac1h T_xq \bigl(\exp_x^{-1}(y) \bigr),
    \end{align*}
    we can rewrite the $h > 0$ part of \eqref{eq:Haar_system_1} as
   \begin{align}
        \label{eq:Haar_system_2}
        (h, x) &\mapsto \int_{\Rd} f \bigl(h, x, v_{(h, x)}^{-1}(r) \bigr) \,h^{-d} \sqrt{\bigl|\det g_{(h, x)}\bigl(v_{(h, x)}^{-1}(r) \bigr) \bigr|} \, d^dr,
    \end{align}
    where $g_{(h, x)}$ is the matrix representation of the metric in the $v_{(h, x)}$-coordinate frame. Now, $g_{(h, x)} = h^2 g_{(1, x)}$, and $v^{-1}_{(h, x)}(r) = \exp_x \circ \,(T_xq)^{-1}(-h r)$, so that
    \begin{align}
        \label{eq:Haar_system_3}
        h^{-d} \sqrt{\bigl|\det g_{(h, x)}\bigl(v_{(h, x)}^{-1}(r) \bigr) \bigr|} = \sqrt{\bigl|\det g_{(1, x)}\bigl( \exp_x \circ \,(T_xq)^{-1}(-h r) \bigr) \bigr|},
    \end{align}
    which is smooth on its domain of definition in $[0, 1] \times U \times \R^d$. Moreover, since
    \begin{align*}
        T_x(v_{(1, x)}^{-1} \circ q) = -T_{0_x} \exp_x \circ \, (T_xq)^{-1} \circ T_x q = -\id_{T_xM},
    \end{align*}
    we find that $g_{(1, x)}(x) = g_q(x)$. Thus, \eqref{eq:Haar_system_3} extends smoothly to $h = 0$, where it takes the value $\sqrt{|\det g_q(x)|}$. Finally, $f \circ \beta$ is smooth on $V_{\beta}\subset [0,1 ] \times TM$, and $(\id_{[0, 1]}, q, v) \circ\beta^{-1}(0, v_x) = ( 0, q(x), T_xq(v_x))$, so \eqref{eq:Haar_system_2} extends smoothly via
    \begin{align*}
        (0, x) &\mapsto \int_{\Rd} f \bigl(0, (T_xq)^{-1}(r)\bigr) \sqrt{|\det g_q(x)|} \, d^dr = \int_{T_xM} f(0, v_x) \,d\lambda_x(v_x). \qedhere
    \end{align*}
\end{proof}

\bibliographystyle{abbrv}
\bibliography{citations} 

\end{document}